\numberwithin{equation}{section}
  \newtheorem{thm}{Theorem}[section]
  \newtheorem{lem}[thm]{Lemma}
  \newtheorem{prop}[thm]{Proposition}
  \newtheorem{cor}[thm]{Corollary}
  \theoremstyle{definition}
  \newtheorem{defn}[thm]{Definition}
  \newtheorem{exm}[thm]{Example}
  \newtheorem{rmk}[thm]{Remark}
 \newcommand\ra{\rightarrow}
\newcommand{\lex}{\,\overrightarrow{\times}\,}
 \newcommand\mI{\mathcal{I}}
 \newcommand\s{\subseteq}
 \newcommand\supp{\mathrm{Supp}}
 \newcommand\B{\mathrm{B}}
  \newcommand\lam{\lambda}
\newcommand{\id}{\mbox{\rm Id}}
\newcommand\spec{\mathrm{Spec}}
 \numberwithin{equation}{section}
\def\iff{if and only if }
\def\im{ \mathrm{Im} }
\let\Right\right
\let\Left\left
\def\right#1{\Right#1\@ifnextchar){\!\right}{}}
\def\left#1{\Left#1\@ifnextchar({\!\left}{}}
\begin{document}
\title[On EMV-algebras with square roots]{On EMV-algebras with square roots}
\author[Anatolij Dvure\v{c}enskij and Omid Zahiri]{Anatolij Dvure\v{c}enskij$^{^{1,2,3}}$, Omid Zahiri$^{^{1,*}}$}

\date{}%
\thanks{The paper acknowledges the support by the grant of
the Slovak Research and Development Agency under contract APVV-20-0069  and the grant VEGA No. 2/0142/20 SAV,  A.D}
\thanks{The project was also funded by the European Union's Horizon 2020 Research and Innovation Programme on the basis of the Grant Agreement under the Marie Sk\l odowska-Curie funding scheme No. 945478 - SASPRO 2, project 1048/01/01,  O.Z}
\thanks{* Corresponding Author: Omid Zahiri}
\address{$^1$Mathematical Institute, Slovak Academy of Sciences, \v{S}tef\'anikova 49, SK-814 73 Bratislava, Slovakia}
\address{$^2$Palack\' y University Olomouc, Faculty of Sciences, t\v r. 17. listopadu 12, CZ-771 46 Olomouc, Czech Republic}
\address{$^3$ Depart. Math., Constantine the Philosopher University in Nitra, Tr. A. Hlinku 1, SK-949 01 Nitra, Slovakia}

\email{dvurecen@mat.savba.sk, zahiri@protonmail.com}
\thanks{}

\keywords{MV-algebra, generalized Boolean algebra, EMV-algebra, square root, strict square root, strict EMV-algebra}
\subjclass[2010]{06C15, 06D35}


\begin{abstract}
A square root is a unary operation with some special properties. In the paper, we introduce and study square roots on EMV-algebras.
First, the known properties of square roots defined on MV-algebras will be generalized for EMV-algebras, and we also find some new ones for MV-algebras. We use square roots to characterize EMV-algebras. Then, we find a relation between the square root of an EMV-algebra and the square root on its representing EMV-algebra with top element.
We show that each strict EMV-algebra has a top element and we investigate the relation between divisible EMV-algebras and EMV-algebras with a special square root. Finally, we present square roots on tribes, EMV-tribes, and we present a complete characterization of any square root on an MV-algebra and on an EMV-algebra by group addition in the corresponding unital $\ell$-group.
\end{abstract}

\date{}

\maketitle

\section{ Introduction }

MV-algebras were introduced by C.C. Chang in \cite{Cha1,Cha2} as basic algebraic tools for many-valued evaluation to provide an
algebraic proof of the completeness of the \L ukasiewicz infinite-valued proposition calculus.
Another fundamental study on MV-algebras was done in \cite{Mun}, which provides a one-to-one characterization of MV-algebras as intervals in
unital Abelian $\ell$-groups. Nowadays, MV-algebras are applied in many areas of mathematics, logic, computing, etc. with very deep results. Therefore, this structure has also many generalizations, like BL-algebras, hoops, BCK-algebras, quasi MV-algebras, etc. In \cite{georgescu}, a non-commutative generalization of MV-algebras, pseudo MV-algebras, was introduced. These algebras are also known as generalized MV-algebras, see \cite{Rac}. Nowadays, there are studied also non-commutative versions of BL-algebras or hoops.

Recently, we introduced EMV-algebras \cite{Dvz} as a common generalization of MV-algebras and generalized Boolean algebras.
A top element for EMV-algebras is not necessarily assumed. In a special case, if it admits a top element, then it is equivalent to an MV-algebra. Conjunction, disjunction, and $\oplus$ are defined but a complement is defined only locally, i.e. if $a$ is a Boolean element, then $[0,a]$ is an MV-algebra and a complement of any element in $[0,a]$ is defined. Each EMV-algebra can be covered by $\{[0,a]\colon a$ is a Boolean element$\}$.
We have to note that the class of EMV-algebras is not a variety because it is not closed under forming subalgebras with respect to the original operations. Some results known for MV-algebras were extended for EMV-algebras and some new results were obtained for MV-algebras.

In \cite{Dvz}, it was proved a basic representation result that each EMV-algebra $M$ either has a top element or there exists an EMV-algebra $N$ with top element such that $M$ can be embedded into $N$ as a maximal ideal of $N$. States as analogs of finitely additive measures were investigated in \cite{297} and morphisms and free EMV-algebras were described in \cite{Dvz3}. The Loomis--Sikorski theorem for these algebras was established in \cite{lomi}. We showed also that every EMV-algebra $M$ is a homomorphic image of $\B(M)$, the generalized Boolean algebra $R$-generated by $M$, where a homomorphism is a homomorphism of generalized effect algebras (see \cite{Dvz4}).
Since the class of all EMV-algebras, $\mathsf{EMV}$, is not a variety, in \cite{Dvz30}, we found the least variety containing $\mathsf{EMV}$.

Square roots have been studied in many algebraic structures.
In the case of non-idempotent semigroup operations, square roots can carry an important part of information.
For example, further binary operations can be easily built from square roots and the given semigroup operations. If the underlying lattice is given by the real unit interval, then the arithmetic, respectively geometric, mean arises in this way.
U. H{\"o}hle in \cite{Hol} studied square roots on the class of residuated lattices and its subclasses such as MV-algebras.
He characterized MV-algebras with square roots in a general case. He proved every MV-algebra with square root belongs to only one of three classes of MV-algebras. He also introduced and investigated strict MV-algebras and found relations between strict MV-algebras, injective MV-algebras, and divisible MV-algebras in the class of complete MV-algebras. It was proved that every complete MV-algebra with square roots is  either a complete Boolean algebra or a Boolean valued model of the real unit interval viewed as an MV-algebra or a product of both. R. Ambrosio in \cite{Amb} continued to investigate strict MV-algebras. She classified strict MV-algebras by using
the concept of  2-atomless. She showed that each strict MV-algebra contains a cyclic element of order $2^{n+1}$ for each integer $n\geq 0$. Some new results on square roots on GL-monoids can be found in \cite{Hol1}, and also in \cite{NPM}, there are interesting ones on square roots. In \cite{BeR}, it was proved that the class of residuated lattices with square roots is a variety. It is clear that every BL-algebra corresponding to a continuous $t$-norm has square roots.

We note that the notion of the square root is also used in \cite{GLP} for quasi MV-algebras and in \cite{ChDu} for quasi-pseudo MV-algebras but in a completely different sense as in our contribution.

Our main goal in this article is to introduce and study square roots of EMV-algebras:
\vspace{1mm}
	\begin{itemize}[nolistsep]
\item[(1)] Find a relation between square roots on an EMV-algebra and its representing EMV-algebra with top element.

\item[(2)] Characterize strict EMV-algebras.

\item[(3)] Classify EMV-algebras with square roots.

\item[(4)] Study divisible EMV-algebras and characterize them by square roots.
\item[(5)]  Give a complete characterization of any square root on any MV-algebra and on any EMV-algebra.
\end{itemize}

The paper is organized as follows:
Section 2 contains basic notions and facts about MV-algebras, square roots, and EMV-algebras which will be used in the next sections.
In Section 3, known properties of square roots defined on MV-algebras are generalized for EMV-algebras, and we also find some new properties. We use square roots to characterize EMV-algebras and we find a relation between the square root of an EMV-algebra and the square root on its representing EMV-algebra with top element.
In Section 4, we introduce strict EMV-algebras, and we show that each strict EMV-algebra has a top element. Moreover, we present a classification of EMV-algebras with square roots. In Section 5,
relations between strict EMV-algebras with some other subclasses of EMV-algebras such as divisible and locally complete EMV-algebras are investigated. Moreover, we present square roots on tribes, EMV-tribes, and a complete characterization of any square root on every MV-algebra and every EMV-algebra by group addition in the corresponding unital $\ell$-group.

\section{Preliminaries}

In this section, we gather some preliminary results about generalized Boolean algebras and EMV-algebras, which will be needed in the following sections. Some of the results of this section (Lemma \ref{lemsqMV}, Proposition \ref{prop3.7}, and Lemma \ref{lem3.2}) are new, and they are related to MV-algebras, so we state them in this section with short proofs.

Recall that a generalized Boolean algebra is a relatively complemented distributive lattice with a bottom element. By the proof of \cite[Thm 2.2]{CoDa}, each generalized Boolean algebra is either a Boolean algebra or can be embedded into the Boolean algebra of subsets of $\mathrm{MaxI}(B)$ as its maximal ideal.

We note that an {\it MV-algebra} is an algebra $(M;\oplus,',0,1)$ of type $(2,1,0,0)$, where $(M;\oplus,0)$ is a commutative monoid with the neutral element $0$ and for all $x,y\in M$, we have:
\vspace{1mm}
\begin{enumerate}[nolistsep]
	\item[(i)] $x''=x$;
	\item[(ii)] $x\oplus 1=1$;
	\item[(iii)] $x\oplus (x\oplus y')'=y\oplus (y\oplus x')'$;
 \item[(iv)] $0'=1$.
\end{enumerate}
\noindent
In any MV-algebra $(M;\oplus,',0,1)$,
we can also define the following  operations:
\[
x\odot y:=(x'\oplus y')',\quad x\ominus y:=(x'\oplus y)'.
\]
MV-algebras are intimately connected with Abelian unital $\ell$-groups: Let $G$ be an Abelian lattice-ordered group ($\ell$-group) written additively, an element $u\ge 0$ of $G$ is said to be a {\it strong unit} if, given $g\in G$, there is an integer $n\ge 1$ such that $g\le nu$. The couple $(G,u)$, where $G$ is an $\ell$-group and $u$ is a fixed strong unit of $G$, is said to be a {\it unital $\ell$-group}. On the interval $[0,u]$, we define $x\oplus y=(x+y)\wedge u$ and $x'=u-x$, $x,y \in M$. Then $\Gamma(G,u)=([0,u];\oplus,',0,u)$ is an MV-algebra. Due to the famous result by Mundici, \cite{Mun}, every MV-algebra is isomorphic to  $\Gamma(G,u)$ for some unital $\ell$-group $(G,u)$. Moreover, there is a categorical equivalence between the category of MV-algebras and the category of unital $\ell$-groups. For more information on MV-algebras and their relationship with unital $\ell$-groups, we recommend consulting with \cite{CDM}.

For any integer $n\in\mathbb N$ and any $x\in M$, we can define $0.x =0$, and $n.x=(n-1).x\oplus x$, $n\ge 1$.
An MV-algebra $(M;\oplus,',0,1)$ is called {\it strongly atomless} if, for each $x\in M\setminus \{0\}$, there exists $z\in M$ such that
$0<z<x$ and $x\odot z'\leq z$ (see \cite{Bel,Amb}).

\begin{defn}\cite{Hol}
Let $M$ be an MV-algebra. A mapping $s:M\to M$ is called a {\em square root} if it satisfies the following conditions:
\begin{enumerate}[nolistsep]
\item[(i)] for all $x\in M$, $s(x)\odot s(x)=x$;
\item[(ii)] for all $x,y\in M$, $y\odot y\leq x$ implies that $y\leq s(x)$.
\end{enumerate}

We note that sometimes ones write $s(x)=x^{\frac{1}{2}}$, $x\in M$, see \cite{Hol,BeR}. We note that due to \cite[Cor 3]{BeR}, the class of MV-algebras with square roots is a variety.

An MV-algebra with a square root $s$ is called {\it strict} \iff $s(0)=s(0)'$. We say also that the square root $s$ is strict.
\end{defn}

\begin{lem}\label{lemsqMV}
Let $M$ be an MV-algebra with a square root $s$. Then for each $x\in M$:
\begin{enumerate}[nolistsep]
\item[{\rm (i)}] $s(x')'\oplus s(x')'=x$, consequently
$s(x')'\leq x\leq s(x)$.
\item[{\rm (ii)}] If $x=s(x')'$ or $x=s(x)$, then $x$ is a Boolean element.
\item[{\rm (ii)}] If $x$ is not a Boolean element, then $s(x')'< x< s(x)$.
\end{enumerate}
\end{lem}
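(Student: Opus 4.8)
The plan is to derive all three parts directly from the two defining properties of the square root, together with the standard elementary facts that in any MV-algebra $a\leq a\oplus a$ and $a\odot a\leq a$ hold for every element, and that $a$ is a Boolean element \iff $a\oplus a=a$ \iff $a\odot a=a$ (i.e.\ the Boolean elements are exactly the idempotents; see \cite{CDM}). Once these are in hand, the argument is essentially bookkeeping.

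For (i), the key move is to apply property (i) of the square root not to $x$ but to $x'$, which gives $s(x')\odot s(x')=x'$. Expanding the left-hand side with the de Morgan identity $u\odot v=(u'\oplus v')'$ at $u=v=s(x')$ and then taking complements (using $z''=z$) rewrites this as $s(x')'\oplus s(x')'=x$, the displayed equation. The two inequalities now drop out: since $a\leq a\oplus a$, putting $a=s(x')'$ yields $s(x')'\leq s(x')'\oplus s(x')'=x$; and since $a\odot a\leq a$, putting $a=s(x)$ and using $s(x)\odot s(x)=x$ yields $x\leq s(x)$.

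For (ii), I feed each hypothesis back through (i). If $x=s(x)$, then $x\odot x=s(x)\odot s(x)=x$, so $x$ is $\odot$-idempotent and hence Boolean. If $x=s(x')'$, then the displayed equation of (i) gives $x\oplus x=s(x')'\oplus s(x')'=x$, so $x$ is $\oplus$-idempotent and again Boolean. Part (iii) is then simply the contrapositive of (ii) combined with (i): part (i) already provides $s(x')'\leq x\leq s(x)$, and if either inequality were an equality, (ii) would force $x$ to be Boolean; hence for non-Boolean $x$ both inequalities are strict.

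I do not expect any genuine obstacle here. The only mildly nonroutine step is recognizing that one should apply the square-root axiom to $x'$ and transport the resulting $\odot$-equation into an $\oplus$-equation via de Morgan; everything else reduces to the idempotency characterization of Boolean elements and the two universal MV-inequalities $a\leq a\oplus a$ and $a\odot a\leq a$.
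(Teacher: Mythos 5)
Your proof is correct and follows essentially the same route as the paper: apply the square-root axiom to $x'$ and translate the resulting $\odot$-equation into an $\oplus$-equation, then use the idempotency characterization of Boolean elements for (ii), with (iii) as the contrapositive combination of (i) and (ii). The only cosmetic difference is that in the case $x=s(x')'$ you conclude $x\oplus x=x$ directly, while the paper notes $x'\odot x'=x'$; both invoke the same characterization from \cite[Thm 1.5.3]{CDM}.
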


\begin{proof}
(i) By definition for each $x\in M$ we have $s(x')\odot s(x')=x'$ which implies that
$s(x')'\oplus s(x')'=x$.

(ii) If $x=s(x)$, then by definition $x\odot x=s(x)\odot s(x)=x$. If $x=s(x')'$, then $x'=s(x')$ which implies that
$x'\odot x'=s(x')\odot s(x')=x'$. So, by \cite[Thm 1.5.3]{CDM}, $x$ is a Boolean element of $M$.

(iii) It follows from (i) and (ii).
\end{proof}

\begin{prop}\label{prop3.7}
Let $s$ be a square root on an MV-algebra $(M;\oplus,',0,1)$. For each $x\in M$, we have
$(s(x')\odot s(x'))'=s(x)\odot s(x)$.
\end{prop}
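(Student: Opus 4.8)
The plan is to reduce both sides of the claimed identity directly to $x$, using only axiom (i) of a square root together with the involution law $x''=x$ of the MV-algebra; no appeal to the minimality condition (ii) or to the $\ell$-group representation should be needed.

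First I would dispatch the right-hand side: applying property (i) of the square root to $x$ itself gives immediately $s(x)\odot s(x)=x$, so the right-hand side equals $x$. Next I would compute the left-hand side by applying the \emph{same} property (i), but this time to the element $x'$ in place of $x$. This yields $s(x')\odot s(x')=x'$. Taking the complement of both sides and invoking the MV-algebra axiom $x''=x$ then gives $(s(x')\odot s(x'))'=(x')'=x$. Comparing the two computations, both sides reduce to $x$, which establishes the equality.

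I do not expect any genuine obstacle here. The statement is an immediate bookkeeping consequence of the idempotent-under-$\odot$ property of $s$ (applied once to $x$ and once to $x'$) combined with double negation; the content is simply to record, in a convenient symmetric form, how $s$ interacts with complementation. The only point to be careful about is to apply property (i) to $x'$ rather than to $x$ when handling the left-hand side, and to keep the single outer complement correctly paired with the double-negation law. One could equally derive it from Lemma~\ref{lemsqMV}(i), since $s(x')'\oplus s(x')'=x$ dualizes under De Morgan to $s(x')\odot s(x')=x'$, but the direct route above is the shortest and is what I would present.
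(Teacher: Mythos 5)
Your proof is correct and takes essentially the same route as the paper: apply (Sq1) to $x$ and to $x'$, then use the involution $x''=x$ to see that both sides reduce to $x$. The paper's proof adds only the cosmetic De Morgan rewriting $(s(x')\odot s(x'))'=s(x')'\oplus s(x')'$ and the auxiliary inequality $s(x)'\leq x'\leq s(x')$, neither of which is needed for the stated equality.
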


\begin{proof}
Let $x\in M$. From
$s(x')\odot s(x')=x'$ and $s(x)\odot s(x)=x$ we get that $s(x)\odot s(x)=(s(x')\odot s(x'))'=s(x')'\oplus s(x')'$.
Also, $x\leq s(x)$ and $x'\leq s(x')$, so $s(x)'\leq x'\leq s(x')$.
\end{proof}

Recently, in \cite{Dvz} an extension of generalized Boolean algebras and MV-algebras, called EMV-algebras, was introduced.

An algebra $(M;\vee,\wedge,\oplus,0)$ of type $(2,2,2,0)$ is called an
{\it extended MV-algebra}, an {\it EMV-algebra} in short, if it satisfies the following conditions:
	\vspace{1mm}
	\begin{itemize}[nolistsep]
	\item[{\rm (E1)}] $(M;\vee,\wedge,0)$ is a distributive lattice with the least element $0$;
	\item[{\rm (E2)}]  $(M;\oplus,0)$ is a commutative ordered monoid with the neutral element $0$;
	\item[{\rm (E3)}] for each $a\in \mI(M):=\{x\in M\mid x\oplus x=x\} $, the element
	$\lambda_{a}(x)=\min\{z\in[0,a]\mid x\oplus z=a\}$
	exists in $M$ for all $x\in [0,a]$, and the algebra $([0,a];\oplus,\lambda_{a},0,b)$ is an MV-algebra;
	\item[{\rm (E4)}] $\mI(M)$ is a full subset of $M$, that is for each $x\in M$, there is $a\in \mI(M)$ such that
	$x\leq a$.
\end{itemize}
The elements of $\mathcal I(M)$ are said to be {\it idempotents}.

An EMV-algebra $M$ is called {\em proper} if it does not have a top element. Clearly, if in a generalized Boolean algebra we put $\oplus =\vee$, for each $x\in [0,a]$, $\lambda_a(x)$ is a relative complement of $x$ in $[0,a]$, then every generalized Boolean algebra can be viewed as an EMV-algebra where top element is not necessarily assumed.
If $1$ is a top element of an EMV-algebra $M$, by (E3), $([0,1];\oplus,\lambda_1,0,1)=(M;\oplus,',0,1)$ is an MV-algebra. Conversely, if $(M;\oplus,',0,1)$ is an MV-algebra, then $(M;\vee,\wedge,\oplus,0)$ is an EMV-algebra with top element $1$. In addition, every EMV-algebra $(M;\vee,\wedge,\oplus,0)$ with top element $1$ is termwise equivalent to an MV-algebra $(M;\oplus,',0,1)$.

Let $(M;\vee,\wedge,\oplus,0)$ be an EMV-algebra.
The lattice structure of $M$  yields a partial order relation on $M$, denoted by $\leq$, that is $x\leq y$ iff $x\vee y=y$ iff $x\wedge y=x$. Also, if $a$ is a fixed idempotent element of $M$, there is a
partial order relation $\preccurlyeq_a$  on
the MV-algebra $([0,a];\oplus,\lambda_a,0,a)$ defined by $x\preccurlyeq_a y$ iff $\lambda_a(x)\oplus y=a$.
By \cite{Dvz3}, we know that for each $x,y\in [0,a]$, we have $x\leq y\Leftrightarrow x\preccurlyeq_a y$.
Also, if  $x,y \le b\in \mathcal I(M)$, then
$x\preccurlyeq_a y \Leftrightarrow  x\leq y\Leftrightarrow x\preccurlyeq_b y$.

\begin{prop}\label{2.2}{\rm \cite[Prop 3.9]{Dvz}}
	Let $(M;\vee,\wedge,\oplus,0)$ be an EMV-algebra and $a,b\in \mI(M)$ such that $a\leq b$.
	Then, for each $x\in [0,a]$, we have
	\vspace{1mm}
	\begin{itemize}[nolistsep]
		\item[{\rm (i)}]  $\lam_a(x)=\lam_b(x)\wedge a$.
		\item[{\rm (ii)}]   $\lam_b(x)=\lam_a(x)\oplus \lam_b(a)$.
		\item[{\rm (iii)}] $\lam_b(a)$ is an idempotent, and $\lam_a(a)=0$.
	\end{itemize}
\end{prop}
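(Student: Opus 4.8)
The plan is to perform every computation inside the MV-algebra $N:=([0,b];\oplus,\lambda_b,0,b)$ furnished by (E3), using repeatedly that $a$ is a \emph{Boolean} element of $N$. Since $a\in\mI(M)$ we have $a\oplus a=a$, and as $a\le b$ we may view $a$ as an element of $N$; by \cite[Thm 1.5.3]{CDM} the idempotents of an MV-algebra are precisely its Boolean elements, so $a$ is Boolean in $N$, and hence so is its complement $\lambda_b(a)$. Part (iii) is then immediate: $\lambda_a(a)=\min\{z\in[0,a]\mid a\oplus z=a\}=0$, because $a\oplus 0=a$ and $0$ is least, whereas $\lambda_b(a)$ being Boolean means $\lambda_b(a)\oplus\lambda_b(a)=\lambda_b(a)$, i.e. $\lambda_b(a)\in\mI(M)$.

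For (i) I would identify $\lambda_a(x)$ with the relativised difference $a\ominus x$ taken in $N$. The recurring tool is the standard fact that for a Boolean element $e$ of an MV-algebra one has $e\oplus y=e\vee y$ for every $y$ \cite{CDM}. Applying it to $e=\lambda_b(a)$ gives $\lambda_b(a)\oplus x=\lambda_b(a)\vee x$, and taking complements in $N$ (De Morgan) yields $a\ominus x=\lambda_b(x)\wedge a$. Next, the MV-identity $x\oplus(a\ominus x)=x\vee a=a$ (the last equality because $x\le a$) shows that $\lambda_b(x)\wedge a\in[0,a]$ is an admissible witness for the minimum defining $\lambda_a(x)$; and the residuation $x\oplus z\ge a\Leftrightarrow z\ge a\ominus x$ shows that any competing $z\in[0,a]$ with $x\oplus z=a$ satisfies $z\ge a\ominus x$. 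Hence $\lambda_a(x)=a\ominus x=\lambda_b(x)\wedge a$.

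Part (ii) then follows from (i) together with the same Boolean absorption. Since $\lambda_b(a)$ is Boolean, $\lambda_a(x)\oplus\lambda_b(a)=\lambda_a(x)\vee\lambda_b(a)$, and substituting $\lambda_a(x)=\lambda_b(x)\wedge a$ and distributing in the lattice gives $(\lambda_b(x)\wedge a)\vee\lambda_b(a)=(\lambda_b(x)\vee\lambda_b(a))\wedge(a\vee\lambda_b(a))$. Now $x\le a$ forces $\lambda_b(a)\le\lambda_b(x)$, so the first factor is $\lambda_b(x)$, while $a\vee\lambda_b(a)=b$ because $a$ is Boolean; the expression collapses to $\lambda_b(x)\wedge b=\lambda_b(x)$, which is exactly (ii).

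The real content, and the only delicate point, sits in (i): one must reconcile the two negations, checking that the complement $\lambda_a$ assigned by (E3) to the interval $[0,a]$ coincides with the relativised complement $x\mapsto a\ominus x$ computed in the larger algebra $N=[0,b]$. What makes them agree is precisely the Boolean-ness of $a$ — it is what turns $\lambda_b(a)\oplus x$ into a join and thereby produces the identity $a\ominus x=\lambda_b(x)\wedge a$; without it the minimum defining $\lambda_a(x)$ need not be expressible through $\lambda_b$. Once this bridge is in place, the remaining ingredients (the residuation adjunction, $x\oplus(a\ominus x)=x\vee a$, the absorption $e\oplus y=e\vee y$ for Boolean $e$, and lattice distributivity) are routine MV-facts from \cite{CDM}, and parts (ii)--(iii) are short formal consequences.
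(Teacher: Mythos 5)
Your proof is correct. Note that the paper you are working against does not actually prove this proposition: it is imported verbatim from \cite[Prop 3.9]{Dvz}, so there is no in-paper argument to compare with; your proposal supplies a sound, self-contained derivation of the cited result. The key steps all check out: $a$ is idempotent in the MV-algebra $([0,b];\oplus,\lambda_b,0,b)$ and hence Boolean there (\cite[Thm 1.5.3]{CDM}), which legitimately yields $\lambda_b(a)\oplus y=\lambda_b(a)\vee y$, the De Morgan computation $a\ominus x=\lambda_b(x)\wedge a$, and the identities $a\vee\lambda_b(a)=b$ and $\lambda_b(a)\oplus\lambda_b(a)=\lambda_b(a)$; the residuation equivalence $x\oplus z\geq a\Leftrightarrow z\geq a\ominus x$ correctly pins down the minimum defining $\lambda_a(x)$, and (ii) follows by distributivity exactly as you say. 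A small remark: you could shortcut (i) by using the dual Boolean absorption $a\odot y=a\wedge y$ directly, since $a\ominus x=a\odot\lambda_b(x)=a\wedge\lambda_b(x)$, avoiding the detour through $\lambda_b(a)\oplus x$ and De Morgan, but this is a matter of taste, not correctness.
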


\begin{lem}\label{2.3}{\rm \cite[Lem 5.1]{Dvz}}
Let $(M;\vee, \wedge,\oplus,0)$ be an EMV-algebra. For all $x,y\in M$, we define
\[ x\odot y=\lam_a(\lam_a(x)\oplus \lam_a(y)), \]
where $a\in\mI(M)$ and $x,y\leq a$. Then $\odot:M\times M\ra M$ is an order preserving, associative, and well-defined binary operation on $M$ which does not depend on $a\in \mI(M)$ with $x,y \le a$.
In addition, if $x,y \in M$, $x\le y$, then $y \odot \lambda_a(x)=y\odot \lambda_b(x)$
for all idempotents $a,b$ of $M$ with $x,y\le a,b$. So, we denote $x\ominus y$ by $x \odot \lambda_a(y)$ for all $a\geq y,x$.
\end{lem}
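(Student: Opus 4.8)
The plan is to observe that $\odot$ is, on each interval $[0,a]$ with $a\in\mI(M)$, nothing but the MV-algebra product of the MV-algebra $([0,a];\oplus,\lam_a,0,a)$, so that all four assertions reduce to comparing the values obtained in two different intervals. First I would record the elementary fact that, since $(M;\oplus,0)$ is an \emph{ordered} monoid and $a\oplus a=a$, the interval $[0,a]$ is closed under $\oplus$: from $x,y\le a$ we get $x\oplus y\le a\oplus a=a$. In particular $x\odot y=\lam_a(\lam_a(x)\oplus\lam_a(y))$ lies in $[0,a]$ (as $\lam_a$ maps into $[0,a]$), so the formula really defines a map $M\times M\ra M$ once well-definedness is established.

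For well-definedness I would first reduce to comparable idempotents: given $a,b\in\mI(M)$ with $x,y\le a$ and $x,y\le b$, fullness (E4) provides $c\in\mI(M)$ with $a\vee b\le c$, so $a\le c$ and $b\le c$; it then suffices to prove that the value computed in $[0,a]$ agrees with the one computed in $[0,c]$ whenever $a\le c$. The crux is the identity
\[
\lam_c(v\oplus\lam_c(a))=\lam_a(v),\qquad v\le a\le c,
\]
which I would derive from Proposition \ref{2.2}(ii): applying it to $\lam_a(v)\in[0,a]$ gives $\lam_c(\lam_a(v))=\lam_a(\lam_a(v))\oplus\lam_c(a)=v\oplus\lam_c(a)$, and then applying the involution $\lam_c$ on $[0,c]$ to $\lam_a(v)\le a\le c$ yields the claim. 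Granting this, for $x,y\le a\le c$ one computes, using Proposition \ref{2.2}(ii), the idempotency of $\lam_c(a)$ (Proposition \ref{2.2}(iii)), and commutativity and associativity of $\oplus$,
\[
\lam_c(x)\oplus\lam_c(y)=\lam_a(x)\oplus\lam_a(y)\oplus\lam_c(a)=w\oplus\lam_c(a),\qquad w:=\lam_a(x)\oplus\lam_a(y)\le a,
\]
whence $\lam_c(\lam_c(x)\oplus\lam_c(y))=\lam_c(w\oplus\lam_c(a))=\lam_a(w)$ by the crux identity, i.e. the two intervals give the same product.

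With well-definedness in hand, order preservation and associativity become routine: for any finite family of elements I would pick a single $a\in\mI(M)$ above all of them (using (E4)), note that $\odot$ keeps us inside $[0,a]$, and then invoke the corresponding facts for the MV-algebra $([0,a];\oplus,\lam_a,0,a)$, where $\odot$ is order preserving in each argument and associative. For the final assertion, where $x\le y$ and the point is that $y\odot\lam_a(x)$ does not depend on the chosen idempotent, I would again reduce to the case $a\le c$ and compute in the two MV-algebras: writing the MV-product as $y\odot\lam_a(x)=\lam_a(\lam_a(y)\oplus x)$, and likewise $y\odot\lam_c(x)=\lam_c(\lam_c(y)\oplus x)$ with $\lam_c(y)\oplus x=v\oplus\lam_c(a)$ for $v:=\lam_a(y)\oplus x\le a$, the same crux identity gives $\lam_c(\lam_c(y)\oplus x)=\lam_a(\lam_a(y)\oplus x)$. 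Hence $y\odot\lam_a(x)=y\odot\lam_c(x)=y\odot\lam_b(x)$, which legitimizes the notation $x\ominus y$.

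I expect the main obstacle to be the crux identity and, more generally, the careful use of Proposition \ref{2.2}(ii) to transport complements between the two MV-algebras $[0,a]$ and $[0,c]$; once that transport is set up, every remaining step is an application of standard MV-algebra identities inside a single interval.
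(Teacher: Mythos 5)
Your proposal is correct. Note that the paper itself gives no proof of this lemma --- it is quoted verbatim from \cite[Lem 5.1]{Dvz} --- so there is no in-paper argument to compare against; but your route (reduce to comparable idempotents $a\le c$ via (E4), transport complements between the MV-algebras $[0,a]$ and $[0,c]$ using Proposition \ref{2.2}(ii)--(iii), and then finish every claim inside a single interval where $\odot$ is the ordinary MV-product) is exactly the natural mechanism these preliminaries are designed for, and each step checks out: the crux identity $\lam_c(v\oplus\lam_c(a))=\lam_a(v)$ for $v\le a\le c$ follows correctly from Proposition \ref{2.2}(ii) together with the involutivity of $\lam_a$ on $[0,a]$ and of $\lam_c$ on $[0,c]$, and the closure facts you need ($x\oplus y\le a$ for $x,y\le a$, hence $w=\lam_a(x)\oplus\lam_a(y)\le a$) follow from (E2). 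Two small points worth making explicit: in the order-preservation step you tacitly use that the MV-order $\preccurlyeq_a$ on $[0,a]$ coincides with the restriction of the lattice order of $M$ (this is the fact from \cite{Dvz3} recorded just before Proposition \ref{2.2}, so it is available); and your argument for the last assertion never uses the hypothesis $x\le y$, so you in fact prove the slightly stronger statement that $y\odot\lam_a(x)=y\odot\lam_b(x)$ for arbitrary $x,y\le a,b$, which is harmless.
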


On every EMV-algebra, we can define a partial operation $+$ as follows: The element $x+y$ is defined in an EMV-algebra $M$ iff $x,y \in M$ are such that $x\odot y=0$, then we set $x+y:= x\oplus y$. It is possible to show that $+$ is commutative, associative, and cancellative. If $M=\Gamma(G,u)$, then $x+y$ is in fact the group addition of $x$ and $y$ in $G$. If $a,b\in \mathcal I(M)$, $a\wedge b=0$, then $a+b=a\vee b$, and $x\wedge (a+b)= (x\wedge a)+(x\wedge b)= (x\wedge a)\vee (x\wedge b)$.

For any integer $n\ge 1$ and any $x$ of an EMV-algebra $M$, we can define $x^1 =x$, $x^n=x^{n-1}\odot x$, $n\ge 2$, and $x^0=1$ if $1$ is defined in $M$.

Let $(M_1;\vee,\wedge,\oplus,0)$ and $(M_2;\vee,\wedge,\oplus,0)$ be EMV-algebras. A map $f:M_1\ra M_2$ is called an EMV-{\it homomorphism}
if $f$ preserves the operations $\vee$, $\wedge$, $\oplus$ and $0$, and for each $b\in\mI(M_1)$ and for each $x\in [0,b]$, $f(\lam_b(x))= \lam_{f(b)}(f(x))$. An EMV-homomorphism $f:M_1\to M_2$ is said to be {\em strong} if, for each $b\in \mI(M_2)$, there exists $a\in\mI(M_1)$ such that $b\leq f(a)$. Easy calculations show that each EMV-morphism preserves the operation $\odot$, too (see \cite{Dvz3,Dvzp}).

The following important result on representing EMV-algebras was established in \cite[Thm 5.21]{Dvz}.

\begin{thm}\label{2.4}{\rm [Basic Representation Theorem]}
Every EMV-algebra $M$ either has a top element or $M$ can be embedded into an EMV-algebra $N$  with top element as a maximal ideal of $N$ such that every element $x\in N$ is either the image of some element from $M$ or $x$ is the complement of the image of some element from $M$.
\end{thm}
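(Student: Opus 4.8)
The plan is to argue by the dichotomy built into the statement. If $M$ already possesses a top element, the first alternative holds and there is nothing to prove, so assume throughout that $M$ is proper. The first step is to locate the ``Boolean skeleton'' of the desired $N$. The idempotents $\mI(M)$ form a generalized Boolean algebra under the induced operations (with $\oplus=\vee$ and relative complements $\lam_a$), and this skeleton cannot have a top element: were $u$ the top of $\mI(M)$, then fullness (E4) would make $u$ a top of $M$, contradicting properness. Hence, by \cite[Thm 2.2]{CoDa}, I would embed $\mI(M)$ into a Boolean algebra $B$ as a maximal ideal, adjoining a genuine top element $1$; this $1$ will be the top element of the EMV-algebra $N$ to be constructed, and $B=\mI(N)$.

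Next I would build $N$ by \emph{doubling} $M$. Concretely, take $N$ to be the disjoint union of a lower copy, identified with $M$ itself, and an upper copy $\overline{M}=\{\overline{x}\colon x\in M\}$, where $\overline{x}$ is the formal value $\lam_1(x)$ of the complement of $x$ with respect to the new top. I would check first that this union is genuinely disjoint: if some $x\in M$ equalled $\overline{y}=\lam_1(y)$, then $\lam_1(a)$ would lie below an idempotent of $M$ for a suitable $a\in\mI(M)$ with $x,y\le a$, forcing $a\vee\lam_1(a)=1\in\mI(M)$ and hence a top of $M$, again a contradiction. The operations are then defined so that $\lam_1$ interchanges the two copies and restricts on each interval $[0,a]$ to the MV-operations already present in $M$. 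For instance, for $x,y\in M$ the sum $x\oplus y$ is computed in $M$ (it stays in $M$ because $x\le a$, $y\le b$ give $x\oplus y\le a\oplus b\in\mI(M)$); on the upper copy $\overline{x}\oplus\overline{y}=\overline{x\odot y}$; and for mixed arguments $x\oplus\overline{y}=\overline{y\ominus x}$, with the lattice operations defined dually by de Morgan rules. The crucial point here is that each such value is independent of the auxiliary idempotent $a$ used to compute it, which I would derive from Lemma \ref{2.3} and Proposition \ref{2.2}.

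With the operations in hand, I would verify that $(N;\vee,\wedge,\oplus,0)$ satisfies (E1)--(E4) with top $1$, equivalently that $(N;\oplus,\lam_1,0,1)$ is an MV-algebra: the lattice axioms and distributivity transfer from $M$ and $B$, commutativity and neutrality of $\oplus$ are immediate, and the remaining MV-identities (associativity of $\oplus$ and the axiom (iii)) are checked case-by-case according to which copies the arguments lie in, each case reducing to an identity already valid in some MV-algebra $[0,a]$ or in $B$. Finally, the inclusion $M\hookrightarrow N$ preserves all operations by construction and is therefore an EMV-embedding; $M$ is an ideal of $N$ since it is a downset closed under $\oplus$; and it is maximal because the quotient $N/M$ is the two-element MV-algebra (for every $\overline{x}$ one has $1\ominus\overline{x}=x\in M$, so $\overline{x}$ and $1$ fall in the same class). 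The complement property is immediate, as every element of $N$ lies in $M$ or is of the form $\overline{x}=\lam_1(x)$.

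I expect the main obstacle to be the bookkeeping in the middle step: establishing that the mixed-copy formulas for $\oplus$, $\vee$, $\wedge$ are well defined independently of the chosen idempotent $a$, and then that they satisfy associativity and the axiom (iii) in every combination of cases. This is where the EMV compatibility relations of Proposition \ref{2.2} (notably $\lam_a(x)=\lam_b(x)\wedge a$ and $\lam_b(x)=\lam_a(x)\oplus\lam_b(a)$) do the real work, ensuring that the truncation implicit in each local MV-structure is compatible with the single global top $1$.
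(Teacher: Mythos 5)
Your proposal is correct and follows essentially the same route as the paper's source: Theorem \ref{2.4} is not proved in this paper but imported from \cite[Thm 5.21]{Dvz}, and the construction there is precisely the complement-adjunction (doubling) you describe — adjoin to a proper $M$ a disjoint copy of formal complements $M'=\{\lambda_1(x)\colon x\in M\}$, define $\oplus$, $\vee$, $\wedge$ by the MV/de Morgan identities so that each value is independent of the dominating idempotent (via Lemma \ref{2.3} and Proposition \ref{2.2}), verify the axioms case by case, and observe that $M$ sits in $N=M\cup M'$ as a maximal ideal with two-element quotient; this is exactly the structure of $N$ the present paper relies on elsewhere (e.g.\ $N=M\cup M'$, $M\cap M'=\emptyset$ in the proof of Theorem \ref{3.6}). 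The only inessential deviation is your preliminary appeal to \cite{CoDa} to complete $\mathcal I(M)$ to a Boolean algebra, which the doubling renders unnecessary, since it automatically produces $\mathcal I(N)=\mathcal I(M)\cup\{\lambda_1(a)\colon a\in\mathcal I(M)\}$.
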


The EMV-algebra $N$ with top element in the latter theorem is unique up to isomorphism and it is said to be {\it representing} the EMV-algebra $M$.
For more details, we refer to \cite{Dvz}.

\begin{lem}\label{lem3.2}
Let $(M;\vee,\wedge,\oplus,0)$ be an EMV-algebra. For each idempotent element $a\in\mI(M)$, consider the MV-algebra $([0,a];\oplus,\lambda_a,0,a)$.
Define a binary relation $\ra_a$ on $[0,a]$ by $x\ra_a y=\lambda_a(x)\oplus y$. Then
\begin{itemize}[nolistsep]
\item[{\rm (i)}] If $a\leq b$ are elements of $\mI(M)$, then for each $x,y\leq a$, $x\ra_a y=(x\ra_b y)\wedge a$.
\item[{\rm (ii)}] $x\odot y\leq (x\odot x)\vee (y\odot y)$.
\end{itemize}
\end{lem}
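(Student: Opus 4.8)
The plan is to localize both computations inside suitable intervals $[0,a]$, using that by (E3) and Lemma \ref{2.3} the EMV-operations $\oplus$, $\odot$ and $\lambda_a$ restrict to the genuine MV-algebra operations of $([0,a];\oplus,\lambda_a,0,a)$; in particular, by Lemma \ref{2.3}, $\odot$ computed in $M$ agrees with $\odot$ computed in any interval $[0,a]$ containing both arguments, and it is order preserving. Thus each part reduces to an identity or inequality that may be checked in an ordinary MV-algebra, where I intend to use the standard facts that $\oplus$ distributes over $\wedge$ and over $\vee$ and, via Mundici's functor $\Gamma$, to pass to the enveloping unital $\ell$-group when convenient. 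No global top element is needed, since everything takes place below a fixed idempotent.

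For (i), I would first record that $y\le a$ together with $a\oplus a=a$ forces $a=a\oplus 0\le a\oplus y\le a\oplus a=a$, hence $a\oplus y=a$. Next, Proposition \ref{2.2}(i) gives $\lambda_a(x)=\lambda_b(x)\wedge a$. Working in the MV-algebra $[0,b]$ (all of $\lambda_b(x),a,y$ lie below $b$) and applying distributivity of $\oplus$ over $\wedge$, the computation would read
\[
x\ra_a y=\lambda_a(x)\oplus y=(\lambda_b(x)\wedge a)\oplus y=(\lambda_b(x)\oplus y)\wedge(a\oplus y)=(\lambda_b(x)\oplus y)\wedge a=(x\ra_b y)\wedge a,
\]
which is exactly the asserted equality.

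For (ii), I would choose by (E4) an idempotent $a$ with $x,y\le a$ and argue inside the MV-algebra $[0,a]$, noting that $x\odot y,\ x\odot x,\ y\odot y$ and $x\vee y$ all lie in $[0,a]$. Since $x,y\le x\vee y$ and $\odot$ is order preserving, I get $x\odot y\le (x\vee y)\odot(x\vee y)$, so it suffices to prove the squaring identity $(x\vee y)\odot(x\vee y)=(x\odot x)\vee(y\odot y)$. This follows most transparently from the representation $z\odot w=(z+w-a)\vee 0$ in the unital $\ell$-group $(G,a)$ with $[0,a]=\Gamma(G,a)$: indeed
\[
(x\vee y)\odot(x\vee y)=(2(x\vee y)-a)\vee 0=\big((2x-a)\vee(2y-a)\big)\vee 0=(x\odot x)\vee(y\odot y),
\]
where I use $2(x\vee y)=2x\vee 2y$ in the $\ell$-group. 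Combining the monotonicity inequality with this identity yields $x\odot y\le(x\odot x)\vee(y\odot y)$.

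The computations themselves are routine; the only places asking for care are the reductions to a single interval. In (i) one must read the two arrow operations in the correct MV-algebras and invoke Proposition \ref{2.2}(i) to convert $\lambda_a$ into $\lambda_b$; in (ii) the whole content sits in the squaring identity $(x\vee y)\odot(x\vee y)=(x\odot x)\vee(y\odot y)$, the single step where the $\ell$-group representation (equivalently, the distributivity of $\odot$ over $\vee$ in each variable) is genuinely used. I expect no essential obstacle beyond correctly justifying that the EMV-operations coincide with the MV-operations on the chosen interval, which Lemma \ref{2.3} supplies.
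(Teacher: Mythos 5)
Your proof is correct. Part (i) is essentially identical to the paper's argument: Proposition \ref{2.2}(i), distributivity of $\oplus$ over $\wedge$, and $a\oplus y=a$ (which you, unlike the paper, justify explicitly). Part (ii), however, takes a genuinely different route. The paper stays inside the MV-algebra $[0,b]$ and uses prelinearity: it writes $b=(x\ra_b y)\vee(y\ra_b x)$, distributes $\odot$ over $\vee$, and applies the identity $u\odot(u\ra_b v)=u\wedge v$ to get $x\odot y=\big((x\odot y)\odot(x\ra_b y)\big)\vee\big((x\odot y)\odot(y\ra_b x)\big)\leq (y\odot y)\vee(x\odot x)$. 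You instead combine monotonicity of $\odot$ with the squaring identity $(x\vee y)\odot(x\vee y)=(x\odot x)\vee(y\odot y)$, proved by passing to the enveloping unital $\ell$-group $[0,a]=\Gamma(G,a)$ and using $2(x\vee y)=2x\vee 2y$; that identity is indeed valid in Abelian $\ell$-groups (e.g. $(2x\vee 2y)-(x+y)=(x-y)\vee(y-x)\geq 0$), so the step is sound. The paper's proof buys a purely equational argument within the MV-axioms, while yours buys a short, transparent computation at the price of invoking Mundici's representation; both are legitimate. One caveat: your closing parenthetical, that the squaring identity is ``equivalently'' the distributivity of $\odot$ over $\vee$, is loose to the point of circularity --- distributivity alone yields $(x\vee y)\odot(x\vee y)=(x\odot x)\vee(x\odot y)\vee(y\odot y)$, and eliminating the middle term is exactly the inequality being proved; so the $\ell$-group step (or the paper's prelinearity step) is genuinely needed and cannot be replaced by distributivity alone.
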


\begin{proof}
(i) $x\ra_a y=\lambda_a(x)\oplus y=(\lambda_b(x)\wedge a)\oplus y=(\lambda_b(x)\oplus y)\wedge (a\oplus y)=(x\ra_b y)\wedge a$.

(ii) Let $x,y\in M$. Choose $b\in \mI(M)$ such that $x\vee y\leq b$. Then
\begin{eqnarray*}
x\odot y&=&(x\odot y)\odot b=(x\odot y)\odot\big((x\ra_b y)\vee (y\ra_b x)\big)\\
&=&\big((x\odot y)\odot (x\ra_b y)\big)\odot
\big((x\odot y)\odot (y\ra_b x) \big) \leq (x\odot x)\vee (y\odot y).
\end{eqnarray*}
\end{proof}

\begin{defn}\cite[Def 3.1]{Dvz4}
An EMV-algebra $(M;\vee,\wedge,\oplus,0)$ is called {\em locally complete} if, for each $x\in M$, there exists
$a\in\mI(M)$ such that $x\leq a$ and the MV-algebra $([0,a];\oplus,\lambda_a,0,a)$ is a complete MV-algebra.
\end{defn}

\section{Square roots of EMV-algebras}

In this section, we will study square roots on EMV-algebras which are unary operations with some special properties. We show that if an EMV-algebra has a square root, then it is unique. We present the main properties of MV-algebras with square roots.
We show that if $M$ is an EMV-algebra and $N$ is its representing EMV-algebra with top element such that $N$ has a square root, then $M$ has a square root, too. Also, we find some representations of EMV-algebras with square roots using the properties of square roots.

\begin{defn}\label{3.1}
Let $(M;\vee,\wedge,\oplus,0)$ be an EMV-algebra. A mapping $r:M\to M$ is called a {\em square root} if it satisfies the following conditions:
\begin{itemize}[nolistsep]
\item[{\rm (Sq1)}] for all $x\in M$, $r(x)\odot r(x)=x$;
\item[{\rm (Sq2)}] for each $x,y\in M$, $y\odot y\leq x$ implies $y\leq r(x)$.
\end{itemize}
An EMV-algebra $(M;\vee,\wedge,\oplus,0)$ has {\em square roots} if there exists a square root $r$ on $M$.
\end{defn}

Before we give some properties of a square root on an EMV-algebra $M$, we note that any square root $s:M\to M$ is a one-to-one map: If $s(x)=s(y)$, then $x=s(x)\odot s(x)= s(y)\odot s(y)=y$. Moreover, if
$r_1$ and $r_2$ are two square roots on an EMV-algebra $M$, then $r_1=r_2$. Indeed, by (Sq2), for each $x\in M$
$r_1(x)\odot r_1(x)\leq x$ implies that $r_1(x)\leq r_2(x)$. In a similar way, $r_2(x)\leq r_1(x)$. That is $r_1=r_2$.

\begin{prop}\label{3.2}
Let $r$ be a square root on an EMV-algebra $(M;\vee,\wedge,\oplus,0)$. Then for each $x,y\in M$ and each $a,b\in\mI(M)$, we have:
\begin{itemize}[nolistsep]
\item[{\rm (i)}] $x\leq x\vee r(0)\leq r(x)$.
\item[{\rm (ii)}] $x\leq y$ implies that $r(x)\leq r(y)$.
\item[{\rm (iii)}] $r(x)\odot r(y)\leq r(x\odot y)$.
\item[{\rm (iv)}] $x\wedge y\leq r(x)\odot r(y)$.
\item[{\rm (v)}] $r(x)\odot r(y)\leq x\vee y$ and if $x\leq a$, then $x\wedge\lam_a(x)\leq r(0)$.
\item[{\rm (vi)}] $r(x)\in\mI(M)$ \iff $r(x)=x$.
\item[{\rm (vii)}] $r(x)\wedge r(y)=r(x\wedge y)$.
\item[{\rm (viii)}] $r_b:[0,b]\to [0,b]$ defined by $r_b(x)=r(x)\wedge b$ is a
square root on the MV-algebra $[0,b]$ with $r_b(b)=b$.
\item[{\rm (ix)}] If $y\leq r(x)\odot r(y)$, then $y\leq x$.
\item[{\rm (x)}] If $r(x),r(y)\leq b$, then $r(x)\ra_b r(y)=r(x\ra_b y)\wedge b$.
\item[{\rm (xi)}] If $M$ is a generalized Boolean algebra (that is, $\oplus=\vee$), then $r$ is the identity map on $M$.
\item[{\rm (xii)}] $r(x\vee y)=r(x)\vee r(y)$ and $r(x\odot y)=(r(x)\odot r(y))\vee r(0)$. Consequently, if $r(0)\leq x$, then $r(x\odot x)=x$.
\item[{\rm (xiii)}] For each $a\in \mI(M)$ with $r(r(0))\leq a$, the element
$(r(0)\ra_a 0)\odot (r(0)\ra_a 0)$ is an idempotent element of $M$.
\item[{\rm (xiv)}] $x\leq r(x\odot x)$ and $r(x\odot x)\odot r(x\odot x)=r(x)\odot r(x)\odot r(x)\odot r(x)$.
\end{itemize}
\end{prop}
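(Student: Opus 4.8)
The plan is to deduce all fourteen items from the two axioms (Sq1) and (Sq2), together with three facts already at our disposal: that $\odot$ is order preserving and block-independent (Lemma \ref{2.3}), the cross-term estimate $x\odot y\le(x\odot x)\vee(y\odot y)$ of Lemma \ref{lem3.2}(ii), and, for the statements confined to a single block, the restriction principle recorded in (viii). The recurring device is this: to prove $w\le r(z)$ it suffices to check $w\odot w\le z$ and invoke (Sq2); and to prove an equality $r(z)=w$ one computes $w\odot w$, shows it equals $z$, and combines (Sq1)/(Sq2) with monotonicity. I also use freely that $r$ is injective and order preserving, as noted immediately before the statement.

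Most items drop out at once in this style. For (i) one uses $x\odot x\le x$ (valid since $x\le a\in\mI(M)$ and $x\odot a=x$) and $r(0)\odot r(0)=0\le x$, so (Sq2) yields $x\le r(x)$ and $r(0)\le r(x)$; (ii) is just $r(x)\odot r(x)=x\le y$. Items (iii), (vii) and (xiv) are squared and fed to (Sq2): e.g.\ $(r(x)\odot r(y))\odot(r(x)\odot r(y))=x\odot y$ and $(r(x)\wedge r(y))\odot(r(x)\wedge r(y))\le x\wedge y$. Item (iv) and the first half of (v) use monotonicity only: $r(x\wedge y)\le r(x),r(y)$ forces $x\wedge y=r(x\wedge y)\odot r(x\wedge y)\le r(x)\odot r(y)$, and dually $r(x)\odot r(y)\le r(x\vee y)\odot r(x\vee y)=x\vee y$; the second half of (v) follows because $z:=x\wedge\lambda_a(x)$ satisfies $z\odot z\le x\odot\lambda_a(x)=0$, whence $z\le r(0)$. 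Item (vi) is the remark that $r(x)$ is idempotent iff $r(x)\odot r(x)=r(x)$, i.e.\ $x=r(x)$. For (viii) one restricts to $[0,b]$, where $b$ is Boolean, so $\odot$ distributes over $\wedge$ and $u\odot b=u\wedge b$; this gives $(r(x)\wedge b)\odot(r(x)\wedge b)=x$, (Sq2) transfers verbatim, and $r_b(b)=r(b)\wedge b=b$ by (i). Item (xi) is immediate, since $\oplus=\vee$ forces $\odot=\wedge$, and then (Sq1) reads $r(x)=x$.

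Item (x) asserts that $r$ commutes with implication; after passing via (viii) to a block $[0,b]$ containing $r(x)$ and $r(y)$ (so the truncation $\wedge b$ is controlled by Lemma \ref{lem3.2}(i)), it reduces to the MV identity $s(u\ra_b v)=s(u)\ra_b s(v)$ for the square root $s$ of the block, which I would get by the usual two-sided computation ($(s(u)\ra_b s(v))\odot(s(u)\ra_b s(v))\le u\ra_b v$ through (Sq2), and the dual inequality). In (xii) the cross-term bound of Lemma \ref{lem3.2}(ii) does the work: expanding $(r(x)\vee r(y))\odot(r(x)\vee r(y))$ and $((r(x)\odot r(y))\vee r(0))\odot((r(x)\odot r(y))\vee r(0))$ and discarding the middle terms by that bound leaves exactly $x\vee y$ and $x\odot y$; with (ii), (iii) this yields the inequalities ``$\ge$''. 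The matching ``$\le$'' directions do not follow formally, because squaring is not injective (e.g.\ $0\odot 0=r(0)\odot r(0)$), so I would settle them inside the block through its representing unital $\ell$-group, where (Sq1)--(Sq2) pin $r$ down in terms of the group operation and both equalities become $\ell$-group arithmetic. The consequence $r(x\odot x)=x$ for $r(0)\le x$ is then a substitution into the $\odot$-identity.

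The two substantial items are (ix) and (xiii), both of which I would settle by passing via (viii) to a block $[0,c]=\Gamma(G,c)$. For (ix), complementing the hypothesis $y=r(y)\odot r(y)\le r(x)\odot r(y)$ inside the block gives $\lambda_c(r(x))\oplus\lambda_c(r(y))\le\lambda_c(r(y))\oplus\lambda_c(r(y))=\lambda_c(y)$; when $y>0$ the right-hand side lies strictly below $c$, so all three sums are untruncated and equal to addition in $G$, and cancellation of $\lambda_c(r(y))$ gives $\lambda_c(r(x))\le\lambda_c(r(y))$, i.e.\ $r(y)\le r(x)$ and hence $y\le x$ (the case $y=0$ being trivial). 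The main obstacle is (xiii). Using the $\odot$-identity of (xii) twice together with the injectivity of $r$, one computes $r(e)=\lambda_a(r(0))$ and $r(e\odot e)=e\vee r(0)$ for $e:=(r(0)\ra_a 0)\odot(r(0)\ra_a 0)=\lambda_a(r(0))\odot\lambda_a(r(0))$, so that $e$ is idempotent iff $r(e\odot e)=r(e)$, i.e.\ iff $\lambda_a(r(0))\le e\vee r(0)$; in the $\ell$-group this is the disjointness $(a-2r(0))\wedge r(0)=0$. Here the hypothesis $r(r(0))\le a$ is essential: it supplies a halving of $a+r(0)$ inside $G$, so that the maximality clause of (Sq2) for $r(0)$ (the greatest $z$ with $z\odot z=0$) forces $2r(0)=a$ on the support of $r(0)$ and $r(0)=0$off it, making the meet vanish. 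I expect this last disjointness, where the naive truncated arithmetic is not by itself sufficient, to be the delicate point of the whole proposition.
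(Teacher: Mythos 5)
Items (i)--(viii), (xi), (xiv), and your sketch of (x) are correct and essentially the paper's own arguments (the paper proves (i) and the first half of (v) by squaring $x\vee r(0)$ and by Lemma \ref{lem3.2}(ii), respectively, but your variants are equivalent). The genuine problems are exactly in the three places where you leave the equational setting and pass to $\ell$-groups: (ix), the hard halves of (xii), and (xiii). In (ix), the step ``$\lambda_c(y)<c$, so all three sums are untruncated'' is false: a block $[0,c]=\Gamma(G,c)$ of an EMV-algebra need not be a chain, and in an $\ell$-group $(q+q)\wedge c<c$ does not imply $q+q\le c$. Concretely, in $\Gamma(\mathbb R^2,(1,1))$ (which even carries a square root), $q=(0.6,0.3)$ gives $q\oplus q=(1,0.6)<(1,1)$ while $q+q=(1.2,0.6)\not\le(1,1)$; and the cancellation you then perform, ``$p\oplus q\le q\oplus q<c$ implies $p\le q$,'' fails for $p=(0.9,0.1)$. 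Your argument is valid only on totally ordered blocks and would need a subdirect decomposition into MV-chains to be repaired; the paper instead gives a purely equational proof, $y=r(y)\odot(r(x)\wedge r(y))=y\odot(r(y)\rightarrow_b r(x))\le r(x)\odot r(y)\odot(r(y)\rightarrow_b r(x))\le r(x)\odot r(x)=x$, valid in every block.

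For (xii), you correctly note that (Sq2) only yields the ``$\ge$'' inequalities, but you settle the ``$\le$'' directions by asserting that ``(Sq1)--(Sq2) pin $r$ down in terms of the group operation.'' That assertion is Theorem \ref{4.3.1} of the paper, proved much later, and its proof uses Proposition \ref{3.2} itself (in particular (xiii) and the Boolean$\,\times\,$strict decomposition it makes possible); invoking it here is circular. The paper instead restricts to a block via (viii) and quotes H\"ohle's MV-algebra results \cite[Cor 2.13(xv), Prop 2.17(xxviii)]{Hol}. For (xiii), your reduction -- computing $r(e)=\lambda_a(r(0))$ and $r(e\odot e)=e\vee r(0)$ from (xii) and injectivity, hence idempotence of $e=\lambda_a(r(0))\odot\lambda_a(r(0))$ iff $(a-2r(0))\wedge r(0)=0$ in $G$ -- is correct, but the final step does not follow from the ingredients you list. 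Maximality of $r(0)$ together with the ``halving'' $r(r(0))$ does not force that disjointness: in $\Gamma(\mathbb Z,3)$ the elements $t=1$ and $v=2$ satisfy $t=\max\{z\mid z\odot z=0\}$, $v=\max\{z\mid z\odot z\le t\}$, $v\odot v=t$, yet $(3-2t)\wedge t=1\neq0$, i.e.\ $\lambda_a(t)\odot\lambda_a(t)=1$ is not idempotent. (This structure carries no global square root, which is why (xiii) is not contradicted -- but it shows your cited hypotheses are insufficient, and the ``support of $r(0)$'' language presupposes the very decomposition that (xiii) is needed to establish.) The paper's proof of (xiii) needs none of this: using (x) it gets $r(\lambda_a(r(0)))\le\lambda_a(r(0))\vee r(r(0))$, then raises to the fourth power and applies Lemma \ref{lem3.2}(ii) twice to obtain $e=\bigl(r(\lambda_a(r(0)))\bigr)^4\le\bigl(\lambda_a(r(0))\bigr)^4=e\odot e$, whence $e=e\odot e$.
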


\begin{proof}
(i) $x\leq r(x)$ follows from (Sq1). Also, $(x\vee r(0))\odot (x\vee r(0))=(x\odot x)\vee (r(0)\odot x)\vee (x\odot r(0))\vee (r(0)\odot r(0))=
(x\odot x)\vee (r(0)\odot x)=x\odot (x\vee r(0))\leq x$ and (Sq2) imply that $x\vee r(0)\leq r(x)$.

(ii) By (Sq2) and  $r(x)\odot r(x)=x\leq y$, we have $r(x)\leq r(y)$.

(iii) From $(r(x)\odot r(y))\odot (r(x)\odot r(y))=(r(x)\odot r(x))\odot (r(y)\odot r(y))=x\odot y$ and
(Sq2) it follows that $r(x)\odot r(y)\leq r(x\odot y)$.

(iv) By (ii), $x\wedge y=r(x\wedge y)\odot r(x\wedge y)\leq r(x)\odot r(y)$.

(v) By Lemma \ref{lem3.2}, we know that $x\odot y\leq (x\odot x)\vee (y\odot y)$. So
$$r(x)\odot r(y)\leq (r(x)\odot r(x))\vee (r(y)\odot r(y))=x\vee y.$$

Now, let $x\le a$. $(x\wedge\lam_a(x))\odot (x\wedge\lam_a(x))=\big((x\wedge\lam_a(x))\odot x\big)\wedge \big((x\wedge\lam_a(x))\odot \lam_a(x) \big)\leq
(\lam_a(x)\odot x)\wedge (x\odot \lam_a(x))=0$, so by
(Sq2), $x\wedge\lam_a(x)\leq r(0)$.

(vi) If $r(x)\in\mI(M)$, then by (Sq1), $x=r(x)\odot r(x)=r(x)\in\mI(M)$. Conversely, if $r(x)=x$, then
$x=r(x)\odot r(x)\leq r(x)=x$ and so $r(x)\in\mI(M)$.

(vii) From (ii) and $x\wedge y\leq x,y$, we have that $r(x\wedge y)\leq r(x)\wedge r(y)$. Also,
$\big(r(x)\wedge r(y) \big)\odot \big(r(x)\wedge r(y) \big)=(r(x)\odot r(x))\wedge (r(x)\odot r(y))\wedge
(r(y)\odot r(x))\wedge (r(y)\odot r(y))=x\wedge (r(x)\odot r(y))\wedge y\leq x\wedge y$, so by (Sq2),
$r(x)\wedge r(y)\leq r(x\wedge y)$.

(viii) Clearly, $r_b:[0,b]\to [0,b]$ is well-defined. For each $x,y\in [0,b]$, we have
$r_b(x)\odot r_b(x)=(r(x)\wedge b)\odot (r(x)\wedge b)=(r(x)\odot r(x))\wedge (r(x)\odot b)\wedge b=r(x)\odot r(x)=x$.
Moreover, if $y\odot y\leq x$, then $y\leq r(x)$ consequently $y\leq r(x)\wedge b=r_b(x)$.

(ix) Let $b\in \mI(M)$ with $r(x\vee y)\leq b$. Consider the binary operation $\ra_b$ defined in Lemma \ref{lem3.2}.
In the MV-algebra $[0,b]$ containing $r(x)$ and $r(y)$ we have
\begin{eqnarray*}
y&=& y\wedge (r(x)\odot r(y))=r(y)\odot r(y)\wedge (r(x)\odot r(y))=r(y)\odot (r(x)\wedge r(y))\\
&=& r(y)\odot \big(r(y)\odot (r(y)\ra_b r(x)) \big)=y\odot (r(y)\ra_b r(x))\leq r(x)\odot r(y)\odot (r(y)\ra_b r(x))\\
&\leq & r(x)\odot r(x)=r(x).
\end{eqnarray*}
(x) In the MV-algebra $[0,b]$, we have
$x\odot (r(x)\ra_b r(y))\odot (r(x)\ra_b r(y))=r(x)\odot r(x)\odot (r(x)\ra_b r(y))\odot (r(x)\ra_b r(y))\leq
r(y)\odot r(y)=y$. It follows that $(r(x)\ra_b r(y))\odot (r(x)\ra_b r(y))\leq x\ra_b y$ and so by
(Sq1), $r(x)\ra_b r(y)\leq r(x\ra_b y)$. That is $r(x)\ra_b r(y)\leq r(x\ra_b y)\wedge b$. Conversely, by (ii) and (iii),
$r(x)\odot r(x\ra_b y)\leq r(x\odot (x\ra_b y))\leq r(y)$, so
$r(x)\odot (r(x\ra_b y)\wedge b)\leq r(y)$ which implies that $r(x\ra_b y)\wedge b\leq r(x)\ra_b r(y)$.
Therefore, $r(x)\ra_b r(y)=r(x\ra_b y)\wedge b$.

(xi) For each $x\in M$, we have $r(x)=r(x)\odot r(x)=x$.

(xii) Let $x,y\in M$ and $a\in\mI(M)$ be such that $r(x\vee y)\leq a$. By (viii), $r_a:[0,a]\ra [0,a]$ is a square root on the MV-algebra $[0,a]$,
so from \cite[Cor 2.13 (xv)]{Hol} it follows that $r_a(x\vee y)=r_a(x)\vee r_a(y)$. Hence,
$r(x\vee y)=r(x\vee y)\wedge a=(r(x)\wedge a)\vee (r(y)\wedge a)=(r(x)\vee r(y))\wedge a=r(x)\vee r(y)$.
Also, by \cite[Prop 2.17 (xxviii)]{Hol}, we get that
$r_a(x\odot y)=(r_a(x)\odot r_a(y))\vee r_a(0)$. Now, $r(x),r(y),r(0)\leq r(x\vee y)=a$ implies that
$r_a(x\odot y)=r(x\odot y)$, $r_a(x)=r(x)$, $r_a(y)=r(y)$ and $r_a(0)=r(0)$. That is,
$r(x\odot y)=(r(x)\odot r(y))\vee r(0)$.

(xiii) Let $a\in \mI(M)$ be such that $r(r(0))\leq a$. Then by (x), $r(r(0)\ra_a 0)=r(r(0))\ra_a r(0)=
r(r(0))\ra_a \big(r(r(0))\odot r(r(0))\big)=\big(r(r(0))\ra_a 0 \big)\vee r(r(0))\leq \big(r(0)\ra_a 0 \big)\vee r(r(0))$.
So,
\begin{equation}
\label{Eer43} (r(0)\ra_a 0)\odot (r(0)\ra_a 0)=(r(r(0)\ra_a 0))^4\leq ((r(0)\ra_a 0)\vee r(r(0)))^4.
\end{equation}
On the other hand, by Lemma \ref{lem3.2}(ii), we can easily show that
\begin{equation}
((r(0)\ra_a 0)\vee r(r(0)))^2\leq ((r(0)\ra_a 0)^2 \vee\big( r(r(0))\odot r(r(0))\big)=
(r(0)\ra_a 0)^2 \vee r(0)
\end{equation}
and similarly,
$\big((r(0)\ra_a 0 )\vee r(r(0))\big)^4=(r(0)\ra_a 0 )^4 \vee (r(0)\odot r(0))=(r(0)\ra_a 0 )^4$.
Therefore, $(r(0)\ra_a 0)\odot (r(0)\ra_a 0)$ is an idempotent element of $M$.

(xiv) $x\odot x\leq x\odot x$ and (Sq2) imply that $x\leq r(x\odot x)$. The second part follows from
(Sq1): Indeed,
$r(x\odot x)\odot r(x\odot x)=x\odot x=r(x)\odot r(x)\odot r(x)\odot r(x)$.
\end{proof}

The next remark helps us to find some upper and lower bounds for $r(x)$, where $r$ is a square root on an EMV-algebra $M$.

\begin{rmk}\label{rmk-cor}
Let $r$ be a square root on an EMV-algebra $(M;\vee,\wedge,\oplus,0)$.

{\rm (i)} In {\rm Proposition \ref{3.2} (i)} and {\rm(xiv)}, we saw that $x\leq x\vee r(0)\leq r(x)$ and $x\leq r(x\odot x)$. Now, by
part {\rm(xii)}, we have $r(x\odot x)=(r(x)\odot r(x))\vee r(0)=x\vee r(0)$. Therefore, for each non-idempotent element $x\in M$
$x\vee r(0)<r(x)$, otherwise, $r(x)=r(x\odot x)$ which implies that $x=x\odot x$ (contradicts with the assumption).

{\rm (ii)} Choose $x\in M$. By definition, $r(x)$ is the $\max\{z\in M\mid z\odot z\leq x\}$.
We claim that $x\oplus r(0)$ is an upper bound for $r(x)$ and $\{z\in M\mid z\odot z\leq x\}$.

Indeed, first, in each EMV-algebra, for all $x,y,z$ the following inequality holds:
\begin{eqnarray*}
(x\ominus z)\odot (y\ominus z)&=&\left(x\odot \lambda_a(z)\right)\odot \left(y\odot \lambda_a(z)\right),\quad \mbox{ $x,y,z\leq a\in\mI(M)$ }\\
&=& (x\odot y)\odot \left(\lambda_a(z)\odot \lambda_a(z)\right)=(x\odot y)\odot \lambda_a(z\oplus z)\\
&=& (x\odot y)\ominus(z\oplus z)\leq (x\odot y)\ominus z.
\end{eqnarray*}

Second, if $z\in M$ such that $z\odot z\leq x$, then $(z\odot z)\ominus x=0$ which means
$(z\ominus x)\odot (z\ominus x)=0$, consequently $z\ominus x\leq r(0)$ and so $z\leq x\oplus r(0)$.
Therefore, from {\rm (i)} and {\rm (ii)} we get $x\leq x\vee r(0)=r(x\odot x)\leq r(x)\leq x\oplus r(0)$.

{\rm (iii)} If $x\in M$ is idempotent, then by (ii) and {\rm Proposition \ref{3.2} (i)} and {\rm(xiv)}, we have
$x\vee r(0)\leq r(x)\leq x\oplus r(0)=x\vee r(0)$ which means $r(x)=x\vee r(0)$.
\end{rmk}

\begin{exm}\label{3.4}
(i) If $(M;\vee,\wedge,\oplus,0)$ is a generalized Boolean algebra, then the identity map $\id_M:M\ra M$ is a square root.
Indeed, for each $x\in M$ we have $\id_M(x)\odot \id_M(x)=x\odot x=x$. Similarly, (Sq2) holds. Also, $\id_M$ is the only
square root on $M$.

(ii) Consider the real unit interval $[0,1]$ with the standard operation $x\oplus y=(x+y)\wedge 1$ and $x'=1-x$, which is a complete MV-algebra.
The map $r:[0,1]\to [0,1]$ sending $x$ to $r(x)=\bigvee\{z\in M\mid z\odot z\leq x\}$ is a square root on $[0,1]$.
Easy calculations show that (a) $r(x)=(1+x)/2$, $x\in [0,1]$, (b) $r(0)=1/2$, $r(1)=1$, (c) $r(x)<r(0) \oplus x$ for $x\in [0,1)$.

The example is a particular case of Proposition \ref{pr:exist}.

(iii) Let $M_1$ be a generalized Boolean algebra that has no top element and let $M_2$ be the MV-algebra of the real interval $[0,1]$. Then $M=M_1\times M_2$ is a proper EMV-algebra which has a square root $r(x_1,x_2)=(x_1,(x_2+1)/2)$, $(x_1,x_2)\in M$. This is a particular case of a general situation of an EMV-algebra described in Corollary \ref{th:EMV} below.

(iv) Let $\{M_i\mid i\in I\}$ be a family of MV-algebras and $r_i:M_i\to M_i$ be a square root on $M_i$ for each $i\in I$. Then the direct product $\prod_{i\in I}M_i$ has a square root, namely $r((x_i)_i)=(r_i(x_i))_i$, $x= (x_i)_i$.

Consider the EMV-algebra $\sum_{i\in I}M_i=\{x=(x_i)_{i\in I}\mid |\{i\in I\mid x_i\ne 0\}|<\infty\}$  (see \cite[Exm 3.2(6)]{Dvz}). Then not necessarily $\sum_{i\in I}M_i$ has square roots.
Indeed, let $r_i(0_i)>0$ for all but finitely many indices $i\in I$. Then $\sum_{i\in I}M_i$ has no square root. Suppose the converse, let $s$ be a square root on $\sum_{i\in I}M_i$. Define $s_i:M_i\to M_i$ by $s_i(y_i)=\pi_i(\ldots,0,\ldots,y_i,\ldots,0,\ldots)$ for each $i$. Then $s_i$ is a square root on $M_i$ so that $s_i=r_i$. But $s(0)=(s_i(0_i))_i= (r_i(0_i))_i\notin \sum_{i\in I}M_i$, where $0=(0_i)_i$.

In other words, a subalgebra of an EMV-algebra with square root does not necessary have a square root.

(v) The Chang MV-algebra $C=\Gamma(\mathbb Z\lex \mathbb Z,(1,0))$ has no square root; the set $\{z\in C\mid z\odot z\le (0,n)\}$ has no maximum for each integer $n\ge 0$.

(vi) Let $B$ be the set of all finite subsets of $\mathbb N$. Then $B$ is a generalized Boolean algebra which is an
EMV-algebra. Consider the EMV-algebra $M=B\times C$, where $C$ is the Chang MV-algebra.
Then, $M$ is a proper EMV-algebra with no square root.
Indeed, if $r:M\to M$ is a square root, then the set $\{(x,y)\in M\colon (x,y)\odot (x,y)=(0,0)\}$ must have a greatest element $(u,v)$.
Clearly, $u=0$ and so $v=\max\{y\in C\colon y\odot y=0\}$ which is absurd, since, in the Chang MV-algebra, this set does not have a maximum.

(vii) An example of a locally complete EMV-algebra without square root is in Remark \ref{rm:contra1} and examples of finite (locally complete) EMV-algebras without square roots are given in Remark \ref{rm:contra2}.

(viii) Let $M=\{i/2^n\mid i=0,\ldots, 2^n,\, n\ge 1\}$ be the MV-algebra of dyadic numbers in the real interval [0,1]. It is not locally complete, but it has a square root $s(x)=(x+1)/2$, $x \in M$ (the restriction of the square root (ii) on $[0,1]$, see \cite{Hol,NPM}. Similarly, the MV-algebra of rational numbers in $[0,1]$ has the square root, the restriction of (ii).

(ix) There are uncountably many MV-subalgebras of $[0,1]$ having no square roots, see Example \ref{ex:irr}(1).

(x) There are countably many MV-subalgebras of $[0,1]$ with square roots, see Example \ref{ex:irr}(3).
\end{exm}

\begin{prop}\label{pr:3.5}
Let $r$ be a square root on an EMV-algebra $(M;\vee,\wedge,\oplus,0)$.
If $a\in \mI(M)$ is such that $r(r(0))\leq a$, then
\begin{itemize}[nolistsep]
\item[{\rm (i)}] $r(x\ra_a 0)=r(x)\ra_a r(0)$;
\item[{\rm (ii)}] $r(x\oplus y)=\left(r(x)\odot \lam_a(r(0))\right)\oplus r(y)$.
\end{itemize}
 Consequently, if $M$ has a top element, then $r(x\oplus y)=(r(x)\odot r(0)')\oplus r(y)$.
\end{prop}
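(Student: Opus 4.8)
The plan is to push everything down to the MV-algebra $[0,a]$ and read off both identities there. First I would observe that the hypothesis $r(r(0))\le a$ is in fact equivalent to $r(0)\le a$: trivially $r(0)\le r(r(0))$ by Proposition \ref{3.2}(i), and conversely $r(r(0))\le r(0)\oplus r(0)\le a\oplus a=a$ by Remark \ref{rmk-cor}(ii) and idempotency of $a$. Consequently, for every $x\le a$ we get $r(x)\le x\oplus r(0)\le a\oplus a=a$ (Remark \ref{rmk-cor}(ii)), so $r$ maps $[0,a]$ into itself, $r|_{[0,a]}=r_a$, and by Proposition \ref{3.2}(viii) this restriction is exactly the square root of the MV-algebra $([0,a];\oplus,\lam_a,0,a)$. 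Throughout I read (i) and (ii) as identities for $x,y\in[0,a]$, which is where $\ra_a$ and $\lam_a$ live, and where the displayed top-element consequence is the case $a=1$.

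For (i), note $x\ra_a 0=\lam_a(x)$ by the definition of $\ra_a$ in Lemma \ref{lem3.2}. Since $r(x),r(0)\le a$, Proposition \ref{3.2}(x) with $y=0$ and $b=a$ gives $r(x)\ra_a r(0)=r(x\ra_a 0)\wedge a$, so it only remains to discard the $\wedge a$, i.e. to check $r(x\ra_a 0)=r(\lam_a(x))\le a$. But this is immediate from Remark \ref{rmk-cor}(ii): $r(\lam_a(x))\le \lam_a(x)\oplus r(0)\le a$. Hence $r(x\ra_a 0)=r(x)\ra_a r(0)$, which is (i); this step uses only results already established.

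For (ii) I would first rewrite the right-hand side. Applying $\lam_a$ to identity (i), namely $r(\lam_a(x))=\lam_a(r(x))\oplus r(0)$, and using $\lam_a(p\oplus q)=\lam_a(p)\odot\lam_a(q)$, yields $\lam_a(r(\lam_a(x)))=r(x)\odot\lam_a(r(0))$; call this $(\star)$. Thus the claim is equivalent to $r(x\oplus y)=\lam_a(r(\lam_a(x)))\oplus r(y)$, a statement purely inside $[0,a]$, and I would prove it by showing that $w:=(r(x)\odot\lam_a(r(0)))\oplus r(y)$ satisfies (Sq1) and (Sq2), whence $w=r(x\oplus y)$ by uniqueness of square roots. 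Writing $\beta:=\lam_a(r(\lam_a(x)))$ and $\gamma:=r(y)$ and passing to the Mundici representation $[0,a]\cong\Gamma(G,a)$, the defining equations become the $\ell$-group relations $2\beta=x$ (from $\beta\oplus\beta=x$, Lemma \ref{lemsqMV}(i) applied to $r_a$) and $2\gamma=y+a$ (from $\gamma\odot\gamma=y$), away from the truncation. A short computation then gives $2w=(x+y+a)\wedge 2a$, from which $w\odot w=(2w-a)\vee 0=(x+y)\wedge a=x\oplus y$ is (Sq1), while any $t$ with $t\odot t\le x\oplus y$ satisfies $2t\le\big((x+y)\wedge a\big)+a=(x+y+a)\wedge 2a=2w$, hence $t\le w$, which is (Sq2). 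The truncation boundary cases I would dispatch by hand: when $x=a$ one has $r(a)=a$ (Remark \ref{rmk-cor}(iii)), so $w=\lam_a(r(0))\oplus r(y)\ge\lam_a(r(0))\oplus r(0)=a=r(a\oplus y)$; when $y=0$, combining $(\star)$ with $r(x)\ge r(0)$ gives $(r(x)\odot\lam_a(r(0)))\oplus r(0)=r(x)=w$. Finally, taking $a=1$ when $M$ has a top element specializes (ii) to $r(x\oplus y)=(r(x)\odot r(0)')\oplus r(y)$.

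The main obstacle is the maximality half of (ii): showing that $w$ is not merely \emph{a} square root of $x\oplus y$ but the greatest element whose square lies below it. The reduction to $\Gamma(G,a)$ collapses this to the single identity $2w=(x+y+a)\wedge 2a$, so the difficulty is not conceptual but bookkeeping — feeding in the correct square-root relations for $\beta$ and $\gamma$ and carefully clearing the truncation boundary cases $x=a$ and $y=0$. Alternatively, once the reduction to $[0,a]$ is in place, (ii) is exactly the additive square-root identity for MV-algebras and may instead be quoted from \cite{Hol}.
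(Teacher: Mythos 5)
Your preliminary reduction and part (i) are sound: the equivalence of $r(r(0))\le a$ with $r(0)\le a$, the observation that $r$ maps $[0,a]$ into itself so that $r\big|_{[0,a]}=r_a$, and the derivation of (i) from Proposition \ref{3.2}(x) together with the bound $r(\lam_a(x))\le \lam_a(x)\oplus r(0)\le a$ coincide, up to cosmetic differences, with the paper's own argument for (i).

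Part (ii), however, has a genuine gap exactly at the step ``passing to the Mundici representation''. In the Mundici group $\Gamma(G,a)$ of the MV-algebra $[0,a]$, the relation $\beta\oplus\beta=x$ means $(2\beta)\wedge a=x$, and this yields $2\beta=x$ only when $G$ is totally ordered (where failure of $2\beta\le a$ forces $x=a$); likewise $\gamma\odot\gamma=y$, i.e. $(2\gamma-a)\vee 0=y$, yields $2\gamma=y+a$ only in the totally ordered case (where failure forces $y=0$). In a general MV-algebra truncation acts componentwise and is not confined to your two global boundary cases. Concretely, take $[0,a]=\Gamma(\mathbb Z^2,(1,1))$, a Boolean algebra whose unique square root is $r=\id$, and $x=y=(1,0)$: neither $x=a$ nor $y=0$, yet $\beta=\gamma=w=(1,0)$, $2\beta=(2,0)\ne x$, $2\gamma=(2,0)\ne y+a=(2,1)$, and your key identity fails, since $2w=(2,0)$ while $(x+y+a)\wedge 2a=(2,1)$. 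Both your (Sq1) and (Sq2) derivations invoke this identity (the (Sq2) chain needs $(x+y+a)\wedge 2a=2w$ to conclude $2t\le 2w$), so the proof collapses precisely where you located ``the main obstacle''. The argument is correct for chains, so it can be repaired: pass first to a subdirect representation of $[0,a]$ into totally ordered MV-algebras (square roots descend to these quotients by Proposition \ref{Homo}, equivalently \cite[Thm 3.1]{Amb}), run your case analysis in each chain, and pull the identity back; alternatively, use the Boolean-times-strict decomposition of \cite[Thm 2.21]{Hol} (cf. Theorem \ref{3.11}), where the Boolean factor is trivial. Your fallback of simply quoting (ii) from \cite{Hol} is also not obviously available: the paper cites \cite{Hol} only for the $\odot$-analogue (Proposition \ref{3.2}(xii)) and proves the $\oplus$-identity itself. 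Indeed, the paper's proof of (ii) needs none of this machinery: it is a purely equational computation inside $[0,a]$, writing $x\oplus y=\lam_a(\lam_a(x)\odot\lam_a(y))$ and repeatedly applying (i), Proposition \ref{3.2}(xii), and standard MV-identities for $\ra_a$, with no group representation and no case analysis.
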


\begin{proof}
(i) Let $x\in M$, $a\in \mI(M)$ and $r(r(0)),x\leq a$. Then by Remark \ref{rmk-cor},
$r(x\ra_a 0)\leq r(a)\leq r(0)\oplus a=a$, so $r(x)\ra_a r(0)=r(x\ra_a 0)\wedge a=r(x\ra_a 0)$.  \\
(ii) Let $x,y\in M$, $a\in \mI(M)$ and $r(r(0)),x,y\leq a$.
It follows that
\begin{eqnarray*}
r(x\oplus y)&=& r\left(\lam_a\left(\lam_a(x)\odot \lam_a(y)\right)\right)=r(\lam_a(x)\odot \lam_a(y))\ra_a r(0), \mbox{ by (i)}\\
&=& \left(\left(r(\lam_a(x))\odot r(\lam_a(y))\right)\vee r(0)\right)\ra_a r(0), \mbox{ by (xii)}\\
&=& \left(r(\lam_a(x))\odot r(\lam_a(y))\right)\ra_a r(0)\\
&=& \left(\left(r(x)\ra_a r(0)\right)\odot \left(r(y)\ra_a r(0)\right)\right)\ra_a r(0), \mbox{ by (i)}\\
&=& (r(x)\ra_a r(0))\ra_a \left(\left(r(y)\ra_a r(0)\right)\ra_a r(0) \right)\\
&=& (r(x)\ra_a r(0))\ra_a\big(r((y\ra_a 0)\ra_a 0)\big), \mbox{ by (i)}\\
&=& (r(x)\ra_a r(0))\ra_a r(y)=\big(r(x)\odot \lam_a(r(0))\big)\oplus r(y).
\end{eqnarray*}
The proof of the rest is straightforward.
\end{proof}

Proposition \ref{pr:3.5}(ii) implies, for each square root $r$ on an EMV-algebra $M$, $r(x\oplus y)\leq r(x)\oplus r(y)$, $x,y \in M$.

\begin{thm}\label{3.5}
Let $r$ be a square root on an EMV-algebra $(M;\vee,\wedge,\oplus,0)$. Then $M$ is a generalized Boolean algebra \iff $r(0)=0$.

In addition, the restriction of $r$ onto the generalized Boolean algebra $\mathcal I(M)$ is a square root on $\mathcal I(M)$ \iff $r(0)=0$.
\end{thm}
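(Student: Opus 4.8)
The plan is to handle both biconditionals by first determining when $r$ fixes points, using the inequality chain of Remark \ref{rmk-cor}(ii), and then invoking the fact that a generalized Boolean algebra is exactly an EMV-algebra in which $\oplus=\vee$, equivalently one in which every element is idempotent.

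For the first equivalence, the substantive direction is $r(0)=0\Rightarrow M$ generalized Boolean. I would substitute $r(0)=0$ into the chain $x\leq x\vee r(0)=r(x\odot x)\leq r(x)\leq x\oplus r(0)$ from Remark \ref{rmk-cor}(ii); both outer terms collapse to $x$, so $r(x)=x$ for every $x\in M$, and then (Sq1) gives $x=r(x)\odot r(x)=x\odot x$. Thus every element is idempotent, each $[0,a]$ is an idempotent MV-algebra and hence a Boolean algebra by \cite[Thm 1.5.3]{CDM}, where $\oplus=\vee$; as $\oplus$ and $\vee$ are independent of the chosen $a\in\mathcal{I}(M)$, this forces $\oplus=\vee$ on all of $M$, and the relative complements $\lambda_a$ make $M$ relatively complemented, i.e. a generalized Boolean algebra. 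For the converse I would simply quote Proposition \ref{3.2}(xi), which gives $r=\id_M$ and hence $r(0)=0$; alternatively, since $\odot=\wedge$ in a generalized Boolean algebra, (Sq1) yields $0=r(0)\odot r(0)=r(0)\wedge r(0)=r(0)$.

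For the second equivalence I would use that $\mathcal{I}(M)$ is itself a generalized Boolean algebra, on which by Example \ref{3.4}(i) the identity is the unique square root. If $r|_{\mathcal{I}(M)}$ is a square root on $\mathcal{I}(M)$, then by definition its values lie in $\mathcal{I}(M)$; in particular $0\in\mathcal{I}(M)$ gives $r(0)\in\mathcal{I}(M)$, and Proposition \ref{3.2}(vi), which says $r(x)\in\mathcal{I}(M)$ exactly when $r(x)=x$, forces $r(0)=0$. Conversely, if $r(0)=0$ the same collapsing chain gives $r(x)=x$ for all $x$, so $r|_{\mathcal{I}(M)}=\id_{\mathcal{I}(M)}$, which is a square root on $\mathcal{I}(M)$ by Example \ref{3.4}(i). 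The only genuinely delicate points are the identification of generalized Boolean algebras with idempotent EMV-algebras and the remark that being a square root on $\mathcal{I}(M)$ already pins the codomain to $\mathcal{I}(M)$, which is exactly what activates Proposition \ref{3.2}(vi); beyond bookkeeping these structural facts I anticipate no real obstacle.
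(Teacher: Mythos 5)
Your proof is correct, but for the substantive implication $r(0)=0\Rightarrow M$ generalized Boolean it takes a genuinely different route from the paper. The paper argues locally: by Proposition \ref{3.2}(viii) each restriction $r_u$ is a square root on the MV-algebra $[0,u]$, and then it invokes H\"ohle's identity $x\wedge(x\ra_u 0)=r_u(x)\odot r_u(x\ra_u 0)$ \cite[Prop 2.11(xxi)]{Hol} to get $x\wedge (x\ra_u 0)\le r_u(0)=0$, so each $[0,u]$ is Boolean by \cite[Thm 1.5.3]{CDM}. You instead work globally with the two-sided bound $x\le r(x)\le x\oplus r(0)$ from Remark \ref{rmk-cor}(ii): setting $r(0)=0$ collapses it to $r=\id_M$, and then (Sq1) gives $x\odot x=x$ for every $x$, hence every element is Boolean and $\oplus=\vee$. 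Your route is more self-contained (no appeal to H\"ohle's identity, only to facts already proved in the paper) and it produces the stronger intermediate fact $r=\id_M$ directly, which makes both the generalized-Boolean conclusion and the second equivalence about $r|_{\mathcal I(M)}$ immediate; the paper's route stays within its general strategy of reducing EMV statements to the interval MV-algebras $[0,u]$ and leaves the second statement as "a direct corollary." The remaining pieces (the converse via Proposition \ref{3.2}(xi), and the use of Example \ref{3.4}(i) together with Proposition \ref{3.2}(vi) for the statement about $\mathcal I(M)$) match the paper's intent, and your observation that being a square root on $\mathcal I(M)$ forces $r(0)\in\mathcal I(M)$, hence $r(0)=0$ by Proposition \ref{3.2}(vi), is a clean way to make the "direct corollary" explicit.
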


\begin{proof}
Let $r(0)=0$ and $u\in \mathcal I(M)$. We will show that $[0,u]$ is a Boolean algebra. Since it is an MV-algebra, it suffices to prove that
$x\wedge (x\ra_u 0)=0$ for all $x\in [0,u]$ (see \cite[Thm 1.5.3]{CDM}).
By Proposition \ref{3.2}(viii), $r_u:[0,u]\ra [0,u]$ is a square root on the MV-algebra $[0,u]$. It follows from \cite[Prop 2.11(xxi)]{Hol}
that $x\wedge (x\ra_u 0)=r_u(x)\odot r_u(x\ra_u 0)= r_u(x)\odot (r_u(x)\ra_u r_u(0))\leq r_u(0)=r(0)\wedge u=0$.

The proof of the converse follows from Proposition \ref{3.2}(xi).

The second statement is a direct corollary of the first one.
\end{proof}

\begin{prop}\label{Homo}
Let $f:M\to E$ be a homomorphism of EMV-algebras and $r$ be a square root on $M$. Then
$t:\im(f)\to \im(f)$ defined by $t(f(x))=f(r(x))$ for all $x\in M$ is a square root on $\im(f)$.
\end{prop}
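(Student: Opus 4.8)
The plan is to corestrict $f$ to a surjective EMV-homomorphism $g\colon M\to N$, where $N:=\im(f)$, and to verify that $t$, defined by $t(g(x))=g(r(x))$, is a well-defined square root on $N$. Recall that $N$ is itself an EMV-algebra and that $g$ preserves $\vee,\wedge,\oplus,0$ together with the derived operations $\odot$ and $\ominus$ and the complements $\lam_a$; in particular $g$ is order preserving. Moreover $g$ is strong: given $b\in\mI(N)$, surjectivity yields $b=g(x_0)$, and choosing $a\in\mI(M)$ with $x_0\le a$ gives $b\le g(a)$ with $g(a)\in\mI(N)$. The only genuinely delicate point is the well-definedness of $t$; once this is settled, properties (Sq1) and (Sq2) follow quickly.

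The main device is localization to MV-blocks. For any finite family of elements of $M$ we may pick $a\in\mI(M)$ dominating them, and then $g$ restricts to a surjective MV-homomorphism $g_a\colon[0,a]\to[0,g(a)]$ (surjectivity: if $y\le g(a)$ then $y=g(x_0)$ and $y=g(x_0\wedge a)$). By Proposition \ref{3.2}(viii), $r_a:=r(\cdot)\wedge a$ is a square root on the MV-algebra $[0,a]$. I will use the fact that on MV-algebras square roots are preserved by surjective homomorphisms: since the square root is the unique map satisfying (Sq1)--(Sq2) and the class of MV-algebras with square roots is a variety (\cite[Cor 3]{BeR}), the operation $r_a$ is compatible with the congruence $\Ker g_a$, so there is a unique square root $\sigma$ on $[0,g(a)]$ with $\sigma\circ g_a=g_a\circ r_a$; explicitly $\sigma(g(x))=g(r(x)\wedge a)=g(r(x))\wedge g(a)$ for $x\le a$.

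Well-definedness now follows: if $g(x)=g(y)$, choose $a\in\mI(M)$ with $x,y,r(x),r(y)\le a$, so that $r_a(x)=r(x)$ and $r_a(y)=r(y)$; applying $\sigma\circ g_a=g_a\circ r_a$ gives $g(r(x))=\sigma(g(x))=\sigma(g(y))=g(r(y))$. Property (Sq1) is then immediate, since $t(g(x))\odot t(g(x))=g(r(x))\odot g(r(x))=g(r(x)\odot r(x))=g(x)$. For (Sq2), let $w,z\in N$ with $w\odot w\le z$; by strongness pick $a\in\mI(M)$ with $w,z\le g(a)$, and write $z=g(x)$ with $x\le a$. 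In the MV-algebra $[0,g(a)]$ the square root $\sigma$ satisfies (Sq2), so $w\odot w\le z$ forces $w\le\sigma(z)=g(r(x))\wedge g(a)=t(z)\wedge g(a)\le t(z)$, as required.

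The main obstacle is concentrated entirely in the MV-level claim that a surjective MV-homomorphism carries the (unique) square root of its domain to the (unique) square root of its image, equivalently that every MV-congruence is compatible with the square root. I would justify this either by quoting \cite[Cor 3]{BeR} together with the uniqueness of square roots established above, or, if a self-contained argument is preferred, by reducing (via the identity $(u\ominus c)\odot(u\ominus c)=(u\odot u)\ominus(c\oplus c)$ of Remark \ref{rmk-cor}) the compatibility of $r_a$ with $\Ker g_a$ to the single statement that $u\odot u\in\Ker g_a$ forces $u\ominus r_a(0)\in\Ker g_a$. Everything else is routine once this compatibility is in hand.
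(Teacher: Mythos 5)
Your architecture is fine, and it parallels the paper's more than you might expect: the paper also works inside the MV-blocks $[0,a]$ via Proposition \ref{3.2}(viii), and it also treats well-definedness of $t$ as the crux. Granting your ``MV-level claim'' (a surjective MV-homomorphism carries the square root of its domain to a square root of its image), your remaining steps --- well-definedness, (Sq1), (Sq2) --- all go through as written. But neither of your two proposed justifications of that claim is a proof, and this is a genuine gap, not a presentational one. The appeal to \cite[Cor 3]{BeR} plus uniqueness is a non sequitur: that result says the class of MV-algebras \emph{equipped with} a square root is equationally definable in the expanded signature $\{\oplus,{}',0,1,s\}$; it does not say that every congruence of the MV-reduct is compatible with $s$ (compare rings with a derivation: a variety in the expanded signature, yet ring congruences need not respect the derivation). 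Since $s$ is not an MV-term operation (on $[0,1]$ it is $x\mapsto (x+1)/2$, which is not a McNaughton function), compatibility of MV-congruences with $s$ is a substantive theorem --- it is \cite[Thm 3.1]{Amb}, which the paper invokes elsewhere --- and not a formal consequence of variety-ness together with uniqueness of square roots.

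Your self-contained alternative also falls short. The ``single statement'' is indeed true: if $u\odot u\in I:=\Ker g_a$, then $u\le r_a(u\odot u)\le (u\odot u)\oplus r_a(0)$ by Remark \ref{rmk-cor}(ii), so $u\ominus r_a(0)\le u\odot u\in I$. But compatibility does not reduce to it. Carrying out the natural reduction: from $x\ominus y,\, y\ominus x\in I$ and your identity one gets $\bigl(r_a(x)\ominus r_a(y)\bigr)\odot\bigl(r_a(x)\ominus r_a(y)\bigr)=x\ominus\bigl(r_a(y)\oplus r_a(y)\bigr)\le x\ominus y\in I$, and then the single statement yields only $\bigl(r_a(x)\ominus r_a(y)\bigr)\ominus r_a(0)\in I$, which is strictly weaker than the needed $r_a(x)\ominus r_a(y)\in I$: the $r_a(0)$-slack cannot be discharged, because $v\ominus r_a(0)\in I$ never forces $v\in I$ (any $v$ with $0<v\le r_a(0)$ and $v\notin I$ witnesses this; equivalently, $v\odot v\in I$ does not imply $v\in I$). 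The paper closes exactly this hole by reversing your order of steps: \emph{before} knowing well-definedness, it proves the image form of (Sq2), namely that $f(y)\odot f(y)\le f(x)$ implies $f(y)\le f(r(x))$, via the pointwise inequality $\lam_a(y\odot y)\oplus x\le\lam_a(y)\oplus r_a(x)$ established inside $[0,a]$ using Proposition \ref{3.2}(x) and (xii); well-definedness is then a two-line consequence of that implication. To repair your write-up, either cite \cite[Thm 3.1]{Amb} for the MV-level claim outright, or prove the stronger pointwise estimate $r_a(x)\ominus r_a(y)\le x\ominus y$ (it follows by applying Proposition \ref{pr:3.5}(ii) to $x\le y\oplus(x\ominus y)$ and then using Remark \ref{rmk-cor}(ii)); either of these makes compatibility, and hence your whole argument, correct.
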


\begin{proof}
(i)  For each $x\in M$, $f(r(x))\odot f(r(x))=f(r(x)\odot r(x))=f(x)$.

(ii) Let $x,y\in M$ be such that $f(y)\odot f(y)\leq f(x)$. Choose $a\in\mI(M)$ such that $x,y,r(0)\leq a$. We note that in any MV-algebra $u\le v$ iff $u'\oplus v=1$. Consider the MV-algebra $[0,a]$. Then
$f(y)\odot f(y)\leq f(x)$ implies that $f(\lam_a(y\odot y)\oplus x)=\lam_{f(a)}(f(y\odot y))\oplus f(x)=f(a)$ because $f(a)$ is the top element in the MV-algebra $[0,f(a)]$.
Since $r_a$ is a square root on $[0,a]$ (see Proposition \ref{3.2}(viii)), we have
\begin{eqnarray*}
\lam_a(y\odot y)\oplus x&\leq & r_a(\lam_a(y\odot y)\oplus x)=r_a((y\odot y)\ra_a x)\\
&=& r_a(y\odot y)\ra_a r_a(x), \mbox{ by Proposition \ref{3.2}(x)}\\
&=& \lam_a(r_a(y\odot y))\oplus r_a(x)=\lam_a\big((r_a(y)\odot r_a(y))\vee r_a(0)\big)\oplus r_a(x)\\
&=& \lam_a(y\vee r(0))\oplus r_a(x)\leq \lam_a(y)\oplus r_a(x).
\end{eqnarray*}
It follows that $f(a)=f(\lam_a(y\odot y)\oplus x)\leq f(\lam_a(y)\oplus r_a(x))\leq f(a)$ which means that
$f(y)\leq f(r_a(x))\le f(r(x))$. That is, $f(y)\odot f(y)\le f(x)$ implies $f(y)\le f(r(x))$.

(iii) We show that $f(x)=f(y)$ entails $f(r(x))=f(r(y))$.
We have that $f(r(x))\odot f(r(x))= f(r(x)\odot r(x))=f(x)=f(y)$ and so by (ii), $f(r(x))\leq f(r(y))$. In a similar way, $f(r(y))\leq f(r(x))$. Whence, $f(x)=f(y)$ implies $f(r(x))=f(r(y))$.

Thus, $\tau:\im(f)\to \im(f)$ sending $f(x)$ to $f(r(x))$ is a square root on $\im(f)$.
\end{proof}

\begin{thm}\label{3.6}
Let $(M;\vee,\wedge,\oplus,0)$ be an EMV-algebra  and $(N;\vee,\wedge,\oplus,0)$ be its representing EMV-algebra with top element.
If $R:N\ra N$ is a square root on $N$, then $r:=R\big|_{M}$ is a square root on $M$.
\end{thm}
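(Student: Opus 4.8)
The plan is to show that the square root $R$ on $N$ carries $M$ into itself; once $R(M)\subseteq M$ is established, both axioms for $r=R\big|_M$ follow almost for free. I identify $M$ with its image under the embedding of Theorem \ref{2.4}, so that $M$ is a maximal (hence proper) ideal of the MV-algebra $(N;\oplus,{}',0,1_N)$, and by that theorem every element of $N$ either lies in $M$ or is the complement $c'=\lambda_{1_N}(c)$ of some $c\in M$. I also use that the embedding is an EMV-homomorphism, so it preserves $\odot$ and the order; thus $\odot$ and $\leq$ computed inside $M$ agree with the ones computed in $N$ on elements of $M$.

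The core of the argument is to prove $R(0)\in M$. Since $M$ is a proper ideal, $1_N\notin M$. Suppose toward a contradiction that $R(0)\notin M$; then by the representation $R(0)=c'$ for some $c\in M$. From (Sq1) applied to $R$ we have $R(0)\odot R(0)=0$, while De Morgan in the MV-algebra $N$ gives $c'\odot c'=(c\oplus c)'$. Hence $(c\oplus c)'=0$, that is $c\oplus c=1_N$. But $M$ is an ideal and therefore closed under $\oplus$, so $c\oplus c\in M$, forcing $1_N\in M$, a contradiction. Therefore $R(0)\in M$.

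With $R(0)\in M$ in hand I finish via Remark \ref{rmk-cor}(ii) applied to $R$ in $N$: for every $x\in M$ one has $R(x)\leq x\oplus R(0)$. Since $M$ is closed under $\oplus$, $x\oplus R(0)\in M$, and since ideals are downward closed and $R(x)\leq x\oplus R(0)$, we get $R(x)\in M$. Thus $r=R\big|_M\colon M\to M$ is a well-defined map. Now (Sq1) is immediate: for $x\in M$, $r(x)\odot r(x)=R(x)\odot R(x)=x$, the operation $\odot$ being the same in $M$ and in $N$. For (Sq2), if $x,y\in M$ satisfy $y\odot y\leq x$ in $M$, the same inequality holds in $N$, so $y\leq R(x)=r(x)$ because $R$ is a square root on $N$; as $y$ and $r(x)$ both lie in $M$, the inequality is inherited by $M$.

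The main obstacle is the step $R(0)\in M$; the remainder is bookkeeping about the embedding and the ideal structure. The delicate point is that I must simultaneously invoke that $M$ is closed under $\oplus$, that $1_N\notin M$, and the dichotomy of the Basic Representation Theorem; it is precisely the description of $N$ as consisting of images of $M$-elements and their complements that lets me locate $R(0)$ and rule out the complement case.
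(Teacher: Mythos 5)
Your proof is correct, and it shares the paper's overall strategy---both arguments reduce the theorem to showing $R(M)\subseteq M$, and both rule out the ``complement'' case via the De Morgan identity $c'\odot c'=(c\oplus c)'$ together with the closure of $M$ under $\oplus$---but the execution differs in a genuine way. The paper applies the dichotomy of Theorem \ref{2.4} directly to $R(x)$ for an \emph{arbitrary} $x\in M$: if $R(x)=y'$ with $y\in M$, then $x=R(x)\odot R(x)=(y\oplus y)'$ would be the complement of an element of $M$, contradicting $x\in M$ and the disjointness $M\cap M'=\emptyset$ for proper $M$; this needs nothing beyond (Sq1). You instead apply the dichotomy only at the single element $0$ (where the contradiction takes the form $1_N\in M$), and then handle every other $x\in M$ by a different mechanism: the upper bound $R(x)\leq x\oplus R(0)$ from Remark \ref{rmk-cor}(ii), combined with the facts that the ideal $M$ is closed under $\oplus$ and downward closed. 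Your route is slightly longer and imports more machinery (the Remark, downward closure of ideals), but it is more modular: it isolates the representation-theoretic input to one point and lets pure ideal theory finish the job, which makes the argument robust to how the rest of $N$ looks. One small caveat: your identification of $M$ as a \emph{proper} maximal ideal of $N$ presupposes that $M$ has no top element; you should first dispose of the degenerate case $M=N$ (where $r=R$ and the claim is trivial) in a sentence, as the paper does explicitly in the proof of Theorem \ref{3.12}.
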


\begin{proof}
It suffices to show that $R(M)\s M$. Indeed, let $x\in N$.

(i) If $x\in M'$, then from $x\leq R(x)$ it follows that $R(x)\notin M$, that is $R(x)\in M'$.

(ii) If $x\in M$, then $R(x)\in M$, otherwise, $R(x)=y'$ for some $y\in M$, so
$x=R(x)\odot R(x)=y'\odot y'=(y\oplus y)'\in M'$ which is a contradiction. Note that $M$ is closed under the operation $\oplus$.

Therefore, $R(M)\s M$, which means that $r:M\to M$ is a square root.
\end{proof}

\begin{thm}\label{3.7}
Let $(B;\vee,\wedge,0)$ be a generalized Boolean algebra and $a\in B\setminus \{0\}$. Then $B\cong B_1\times B_2$, where
$B_1=[0,a]$ and $B_2=\bigsqcup_{a\leq b} [0,\lambda_b(a)]$ \rm{(for notation, see \cite[Page 886]{Dvz4})}.
\end{thm}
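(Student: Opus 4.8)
The plan is to exhibit an explicit isomorphism. First I would record what the two factors are. The interval $B_1=[0,a]$ is a genuine Boolean algebra with top element $a$. For $B_2$, Proposition \ref{2.2}(ii) gives $\lam_b(a)\le\lam_{b'}(a)$ whenever $a\le b\le b'$ (since $\lam_{b'}(a)=\lam_b(a)\oplus\lam_{b'}(b)=\lam_b(a)\vee\lam_{b'}(b)$), so the intervals $[0,\lam_b(a)]$ form a directed family and $B_2=\bigsqcup_{a\le b}[0,\lam_b(a)]$ is their union. I would then check that this union is precisely the ideal $a^{\perp}:=\{y\in B\mid y\wedge a=0\}$: every $y\le\lam_b(a)$ is disjoint from $a$ because $\lam_b(a)\wedge a=0$, and conversely any $y$ with $y\wedge a=0$, upon setting $b:=y\vee a\ge a$, is the complement of $a$ in $[0,b]$, hence $y=\lam_b(a)\in[0,\lam_b(a)]$. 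Thus $B_2$ is the ideal $a^{\perp}$, itself a generalized Boolean algebra.

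Next I would define $\phi:B\to B_1\times B_2$ by $\phi(x)=(x\wedge a,\;x\ominus a)$, where (recalling $\odot=\wedge$ here) $x\ominus a=x\wedge\lam_b(a)$ for any $b\in\mI(B)$ with $b\ge x\vee a$. By Lemma \ref{2.3} this difference is independent of the choice of $b$, and since $x\ominus a\le\lam_b(a)$ we have $x\ominus a\in B_2$, so $\phi$ is well-defined. To see $\phi$ is a lattice homomorphism I would fix, for any pair $x,y$, a common $b\ge x\vee y\vee a$ in $\mI(B)$ and work inside the ordinary Boolean algebra $[0,b]$; there everything reduces to distributivity and De Morgan's laws, giving $\phi(0)=(0,0)$, $(x\vee y)\wedge a=(x\wedge a)\vee(y\wedge a)$, $(x\vee y)\ominus a=(x\ominus a)\vee(y\ominus a)$, and the dual identities for $\wedge$. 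Since a bijective lattice homomorphism preserving $0$ between relatively complemented distributive lattices automatically preserves relative complements, it suffices to obtain a lattice isomorphism.

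For bijectivity I would use the orthogonal decomposition available inside $[0,b]$ with $b=x\vee a$: the identities $(x\wedge a)\vee(x\ominus a)=x\wedge(a\vee\lam_b(a))=x\wedge b=x$ and $(x\wedge a)\wedge(x\ominus a)=x\wedge(a\wedge\lam_b(a))=0$ show $x=(x\wedge a)+(x\ominus a)$. Hence $\phi(x)=\phi(x')$ forces $x\wedge a=x'\wedge a$ and $x\ominus a=x'\ominus a$, so $x=x'$, proving injectivity. For surjectivity, given $(p,q)\in B_1\times B_2$ with $p\le a$ and $q\wedge a=0$, I would take $x:=p\vee q$ and verify $x\wedge a=(p\wedge a)\vee(q\wedge a)=p$, while with $b:=q\vee a$ (so $q=\lam_b(a)$) one gets $x\ominus a=(p\vee q)\wedge\lam_b(a)=q$, whence $\phi(x)=(p,q)$.

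The main obstacle is purely bookkeeping across intervals: because $B$ need not have a top element, the ``complement part'' $x\ominus a$ is only meaningful inside an ambient interval $[0,b]$, and one must confirm that every Boolean identity invoked is stable under enlarging $b$. This stability is exactly what Lemma \ref{2.3} (well-definedness of $\ominus$) and Proposition \ref{2.2} provide. Once a common $b$ is fixed, $[0,b]$ is an honest Boolean algebra and the whole statement collapses to the classical direct-product decomposition $[0,b]\cong[0,a]\times[0,\lam_b(a)]$ along the complementary pair $a,\lam_b(a)$; passing to the directed union over all $b\ge a$ then yields the global isomorphism $B\cong B_1\times B_2$.
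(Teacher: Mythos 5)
Your proposal is correct, and at its core it is the same proof as the paper's: you use the identical map $x\mapsto (x\wedge a,\,x\wedge \lambda_b(a))$, the same well-definedness check via Proposition \ref{2.2}(i)/Lemma \ref{2.3}, and the same injectivity argument (the orthogonal decomposition $x=(x\wedge a)\vee(x\wedge\lambda_b(a))$ inside $[0,b]$) and surjectivity argument ($x:=p\vee q$). There are two genuine, worthwhile differences in presentation. First, you identify $B_2=\bigsqcup_{a\le b}[0,\lambda_b(a)]$ concretely as the ideal $a^{\perp}=\{y\in B\mid y\wedge a=0\}$, which the paper never makes explicit (it only invokes the nested-union construction of \cite{Dvz4}); this makes surjectivity transparent. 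Second, and more substantively, you bypass the longest part of the paper's proof --- step (iii), the explicit computation verifying $\varphi(\lambda_u(x))=\lambda_{\varphi(u)}(\varphi(x))$ --- by observing that a bijective $0$-preserving lattice homomorphism between relatively complemented distributive lattices automatically preserves relative complements, since a bijective lattice homomorphism is an order isomorphism and relative complements in distributive lattices are unique. This shortcut is legitimate, but note that it only works because you establish bijectivity independently of the homomorphism property; the paper's explicit $\lambda$-computation, by contrast, yields the EMV-homomorphism property for $\varphi$ before bijectivity is known, which is the pattern that generalizes to Corollary \ref{3.8}, where the intervals $[0,b]$ are MV-algebras rather than Boolean algebras and the uniqueness-of-complements trick is no longer available in the same form.
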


\begin{proof}
We know that $[0,a]$ and $[0,\lambda_b(a)]$ are generalized Boolean algebras. Also,
$$
\{([0,\lambda_b(a)];\oplus,\lambda_{_{\lambda_b(a)}},0,\lambda_b(a))\mid b\geq a\}
$$
is a family of nested MV-algebras.
By \cite[Page 886]{Dvz4}, $\bigsqcup_{a\leq b}[0,\lambda_b(a)]$ is an EMV-algebra such that its elements are idempotent,
that is $M_2:=\bigsqcup_{a\leq b}[0,\lambda_b(a)]$ is a generalized Boolean algebra.

Define $\varphi:B\to M_1\times M_2$ by $\varphi(x)=(a\wedge x,\lambda_b(a)\wedge x)$, where $b$ is an arbitrary element of $B$ such that
$a,x\leq b$. First, we show that $\varphi$ is well-defined.

(i) Let $b_1,b_2\in B$ such that $x,a\leq b_1,b_2$. Set $b:=b_1\vee b_2$. By Proposition \ref{2.2}(i),
$\lambda_b(a)\wedge x=\lambda_b(a)\wedge (b_1\wedge x)=(b_1\wedge \lambda_b(a))\wedge x=\lambda_{b_1}(a)\wedge x$.
In a similar way, $\lambda_b(a)\wedge x=\lambda_{b_2}(a)\wedge x$. That is, $\varphi$ is well-defined. Now, we prove that $\varphi$ preserves the
operations $\vee$, $\wedge$, and $0$.

(ii) By definition, $\varphi$ preserves $0$. Let $x,y\in B$. Choose $b\in B$ such that $x,y,a\leq b$. Then by (i), we have
\begin{eqnarray*}
\varphi(x\vee y)&=& (a\wedge(x\vee y),\lambda_b(a)\wedge (x\vee y))=((a\wedge x)\vee (a\wedge y), (\lambda_b(a)\wedge x)\vee (\lambda_b(a)\wedge y))\\
&=& ((a\wedge x), (\lambda_b(a)\wedge x))\vee ((a\wedge y), (\lambda_b(a)\wedge y)).
\end{eqnarray*}
On the other hand, since $x,y,a\leq b$, by (i) we get
\begin{eqnarray*}
\varphi(x)&=& (a\wedge x,\lambda_b(a)\wedge x),\\
\varphi(y)&=&(a\wedge y, \lambda_b(a)\wedge y).
\end{eqnarray*}
Hence, $\varphi(x\vee y)=\varphi(x)\vee \varphi(y)$. In a similar way, we can show that $\varphi$ preserves $\wedge$.

(iii) The mapping $\varphi$ is a homomorphism of EMV-algebras.
By (ii) and \cite[Rem 3.10]{Dvz}, it suffices to show that, for each $x\in B$, there exists $u\in B$ such that $\varphi\big|_{[0,u]}:[0,u]\to [0,\varphi(u)]$ is a homomorphism of Boolean algebras. That is, for each $z\leq u$,
$\varphi(\lambda_u(z))=\lambda_{\varphi(u)}(\varphi(z))$.
Let $a\leq u\in B$ and $b$ be an element of $B$ such that $u,a\leq b$. Then $\varphi(u)=(a\wedge u,\lambda_b(a)\wedge u)$.
By definition, $\varphi(u)=(a\wedge u,\lambda_b(a)\wedge u)=(a\wedge u,\lambda_u(a))$. Thus, for each $x\in [0,u]$,
\begin{eqnarray}
\lambda_{\varphi(u)}(\varphi(x))&=&\lambda_{\varphi(u)}(a\wedge x,\lambda_b(a)\wedge x)=(\lambda_{a\wedge u}(a\wedge x),\lambda_{\lambda_u(a)}(\lambda_b(a)\wedge x)).
\end{eqnarray}
Since $a\wedge x\leq a\wedge u\leq a\leq u$, by Proposition \ref{2.2}(i),
$\lambda_{u\wedge a}(a\wedge x)=\lambda_{u}(a\wedge x)\wedge a=(\lambda_u(a)\vee \lambda_u(x))\wedge a=\lambda_u(x)\wedge a$. Moreover,
$\lambda_{\lambda_u(a)}(\lam_b(a)\wedge x)=\lambda_{b}(\lam_b(a)\wedge x)\wedge \lambda_u(a)=
(a\vee \lambda_b(x))\wedge \lambda_u(a)=\lambda_b(x)\wedge \lambda_u(a)=\lambda_b(x)\wedge (\lambda_b(a)\wedge u)=
(\lambda_b(x)\wedge u)\wedge \lambda_b(a)=\lambda_u(x)\wedge \lambda_b(a)$.
So,
\begin{eqnarray*}
\varphi(\lambda_u(x))&=& (a\wedge \lambda_u(x),\lambda_b(a)\wedge \lambda_u(x))=\lambda_{\varphi(u)}(\varphi(x)).
\end{eqnarray*}
From (i)--(iii), it follows that $\varphi$ is a homomorphism of EMV-algebras. Clearly, $\varphi$ is one-to-one: Indeed,
for each $x,y\in B$, $\varphi(x)=\varphi(y)$ implies that (assume that $x\vee y\leq b\in B$)
\begin{eqnarray*}
(a\wedge x,\lambda_b(a)\wedge x)=\varphi(x)=\varphi(y)=(a\wedge y,\lambda_b(a)\wedge y),
\end{eqnarray*}
consequently, $x=x\wedge b=x\wedge (a\vee \lambda_b(a))=(a\wedge x)\vee (\lambda_b(a)\wedge x)=
(a\wedge y)\vee (\lambda_b(a)\wedge y)=y\wedge (a\vee \lambda_b(a))=y\wedge b=y$.
Now, let $y=(y_1,y_2)\in B_1\times B_2$. There exists $b\in B$ such that $a\leq b$, $y_1\leq a$ and $y_2\leq \lambda_{b}(a)$.
Set $x:=y_1\vee y_2$. We can easily show that $\varphi(x)=(a\wedge x,\lambda_{b}(a)\wedge x)=(y_1,y_2)=y$.
Therefore, $\varphi:B\to B_1\times B_2$ is an isomorphism and $B\cong B_1\times B_2$.
\end{proof}

Similarly to the proof of Theorem \ref{3.7}, we can generalize this result for EMV-algebras instead of generalized Boolean algebras.

\begin{cor}\label{3.8}
Let $(M;\vee,\wedge,\oplus,0)$ be an EMV-algebra and $a\in\mI(M)\setminus \{0\}$. Then
$M_1:=[0,a]$,  $M_2:=\bigsqcup_{a\leq b\in\mI(M)}[0,\lambda_b(a)]$ are EMV-algebras, and
$\varphi:M\to M_1\times M_2$ sending $x$ to $\varphi(x):=(x\wedge a,x\wedge \lambda_b(a))$ where $x\leq b$ and $a\leq b\in\mI(M)$,
is an isomorphism of EMV-algebras.
\end{cor}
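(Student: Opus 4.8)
The plan is to reproduce the scheme of Theorem~\ref{3.7} step by step, since the only EMV-operation not already covered by the generalized Boolean argument is $\oplus$; accordingly I would carry over verbatim everything except additivity, which needs one genuinely new verification. First I would record the ambient structure: by (E3) the interval $M_1=[0,a]$ is an MV-algebra with top $a$, hence an EMV-algebra, while Proposition~\ref{2.2} shows that for $a\leq b_1\leq b_2$ in $\mI(M)$ one has $\lambda_{b_1}(a)\leq\lambda_{b_2}(a)$ with compatible negations, so the family $\{[0,\lambda_b(a)]\mid a\leq b\in\mI(M)\}$ is nested and the construction of \cite[Page 886]{Dvz4} turns $M_2=\bigsqcup_{a\leq b}[0,\lambda_b(a)]$ into an EMV-algebra. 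I would also note that the idempotents above $a$ are cofinal: given any idempotent $c\geq x$, the element $a\oplus c$ is idempotent and lies above both $a$ and $x$; this makes $\varphi$ defined on all of $M$. Well-definedness and preservation of $\vee,\wedge,0$ then go through word for word as in parts (i)--(ii) of Theorem~\ref{3.7}: for $x,a\leq b_1,b_2$ put $b=b_1\vee b_2$ and apply Proposition~\ref{2.2}(i) to get $\lambda_b(a)\wedge x=\lambda_{b_1}(a)\wedge x=\lambda_{b_2}(a)\wedge x$, after which lattice distributivity gives the join/meet homomorphism property.

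The genuinely new computation is $\varphi(x\oplus y)=\varphi(x)\oplus\varphi(y)$. Here I would fix an idempotent $b\geq a$ with $x,y\leq b$, so that $x\oplus y\leq b\oplus b=b$, and then observe that $a$ is a Boolean element of the MV-algebra $[0,b]$ (from $a\oplus a=a$, via \cite[Thm 1.5.3]{CDM}) with complement $\lambda_b(a)$. The orthogonal decomposition $b=a+\lambda_b(a)$ together with the preliminary identity $z\wedge(a+\lambda_b(a))=(z\wedge a)+(z\wedge\lambda_b(a))$ exhibits $z\mapsto(z\wedge a,z\wedge\lambda_b(a))$ as the standard MV-algebra direct product isomorphism $[0,b]\cong[0,a]\times[0,\lambda_b(a)]$, which preserves $\oplus$ componentwise. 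Consequently $(x\oplus y)\wedge a=(x\wedge a)\oplus(y\wedge a)$ and the analogous identity in the second coordinate hold, i.e. $\varphi$ is additive.

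To upgrade $\varphi$ to an EMV-homomorphism I would, exactly as in part (iii) of Theorem~\ref{3.7}, invoke \cite[Rem 3.10]{Dvz} and check the negation condition locally on the cofinal family $[0,u]$ with $a\leq u\in\mI(M)$. Choosing $b\geq u$ one has $\varphi(u)=(a,\lambda_u(a))$, and for $x\leq u$ the required equality $\varphi(\lambda_u(x))=\lambda_{\varphi(u)}(\varphi(x))$ splits coordinatewise into $\lambda_a(a\wedge x)=a\wedge\lambda_u(x)$ and $\lambda_{\lambda_u(a)}(\lambda_b(a)\wedge x)=\lambda_b(a)\wedge\lambda_u(x)$. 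Both follow from Proposition~\ref{2.2}(i) together with the De Morgan law $\lambda_u(a\wedge x)=\lambda_u(a)\vee\lambda_u(x)$; the essential point is that these laws hold in every MV-algebra, not merely in Boolean algebras, so the calculation of Theorem~\ref{3.7} transcribes directly once it is read inside the local MV-algebras $[0,u]$. Finally I would obtain injectivity from $x=x\wedge b=(x\wedge a)\vee(x\wedge\lambda_b(a))$ (using $a\vee\lambda_b(a)=b$) and surjectivity from $\varphi(y_1\vee y_2)=(y_1,y_2)$ for $y_1\leq a$, $y_2\leq\lambda_b(a)$, both verbatim from Theorem~\ref{3.7}.

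I expect the additivity step to be the only real obstacle: unlike the Boolean setting where $\oplus=\vee$, one must genuinely use that an idempotent of an EMV-algebra is a Boolean element of each local MV-algebra containing it, and that such an element splits that interval as an MV-algebra direct product. Once this standard MV-algebra fact is in hand, every remaining step is a faithful transcription of Theorem~\ref{3.7}, with the lattice operations there replaced, where necessary, by the corresponding EMV operations.
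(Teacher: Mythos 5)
Your proposal is correct and follows essentially the same route as the paper, whose entire proof of Corollary \ref{3.8} consists of the remark that the argument of Theorem \ref{3.7} generalizes from generalized Boolean algebras to EMV-algebras. Your identification of $\oplus$-preservation as the only genuinely new step --- handled via the standard MV-algebra fact that the idempotent $a$ splits each local MV-algebra $[0,b]$ as $[0,a]\times[0,\lambda_b(a)]$ through $z\mapsto(z\wedge a,\,z\wedge\lambda_b(a))$ --- is exactly the detail the paper leaves implicit, and you handle it correctly.
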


We note that, in Theorem \ref{3.7} and Corollary \ref{3.8}, the homomorphism $\varphi$ is indeed a strong homomorphism (see \cite[Page 122]{Dvz}).

\section{Classification of EMV-algebras with square roots}

We introduce strict EMV-algebras and we show that if an EMV-algebra is strict, then it has a top element. Strict EMV-algebras will serve for a classification of EMV-algebras with square roots.
We show that each EMV-algebra with square root is a generalized Boolean algebra or a strict EMV-algebra or a direct product of a generalized Boolean algebra and a strict EMV-algebra.

\begin{defn}\label{3.9}
An EMV-algebra $(M;\vee,\wedge,\oplus,0)$ with the square root $s:M\to M$ is called {\em strict} if, for each $b\geq s(0)$, the MV-algebra
$([0,b];\oplus,\lambda_b,0,b)$ with the square root $s_b$ is strict, or, equivalently, $s_b(0)=\lambda_b(s_b(0))$.
\end{defn}

In the sequel, we will propose a representation for EMV-algebras using square roots.

\begin{thm}\label{3.10}
Each strict EMV-algebra has a top element.
\end{thm}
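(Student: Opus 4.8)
The plan is to exhibit an explicit candidate for the top element, namely $t:=s(0)\oplus s(0)$, and to show that strictness forces \emph{every} idempotent lying above $s(0)$ to coincide with $t$; fullness of $\mI(M)$ then finishes the argument.

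First I would fix an arbitrary idempotent $b\in\mI(M)$ with $s(0)\le b$. By Proposition \ref{3.2}(viii) the restriction $s_b$ is a square root on the MV-algebra $[0,b]$, and $s_b(0)=s(0)\wedge b=s(0)$, the last equality because $s(0)\le b$. Strictness (Definition \ref{3.9}) gives $s_b(0)=\lam_b(s_b(0))$, i.e. $s(0)=\lam_b(s(0))$. Since $b$ is idempotent and $\oplus$ is order preserving, for $x,y\le b$ one has $x\oplus y\le b\oplus b=b$, so the global $\oplus$ already restricts to the MV-operation of $[0,b]$; in particular $s(0)\oplus\lam_b(s(0))=b$, being the sum of an element with its complement, i.e. the top of $[0,b]$. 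Substituting $\lam_b(s(0))=s(0)$ yields $b=s(0)\oplus s(0)=t$. Thus every idempotent above $s(0)$ equals $t$; as such an idempotent exists by fullness (E4) applied to $s(0)\in M$, the element $t$ is itself idempotent and is the unique idempotent dominating $s(0)$.

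Second, to see that $t$ is the top of $M$, take any $x\in M$ and apply (E4) to the element $x\vee s(0)\in M$ to obtain an idempotent $a\in\mI(M)$ with $x\vee s(0)\le a$. Then $a$ is an idempotent with $a\ge s(0)$, so the uniqueness just established forces $a=t$, whence $x\le x\vee s(0)\le a=t$. As $x$ was arbitrary, $t$ is a top element of $M$.

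The conceptual step, and the only place where strictness is genuinely used, is the identity $b=s(0)\oplus s(0)$ holding for all idempotents $b\ge s(0)$: the right-hand side does not depend on $b$, which is exactly what collapses all such idempotents to a single element. The remaining points—that the local and global $\oplus$ agree on a block $[0,b]$ (via monotonicity and idempotency of $b$) and that one may always place $x$ and $s(0)$ under a common idempotent (via fullness)—are routine, so I do not expect a real obstacle once this collapsing phenomenon is spotted.
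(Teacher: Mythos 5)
Your proof is correct and takes essentially the same approach as the paper: both exploit strictness to get $\lambda_b(s(0))=s(0)$ for every idempotent $b\geq s(0)$, conclude that all such idempotents collapse to a single element, and then finish with fullness (E4). The only cosmetic difference is that you pin down this element explicitly as $s(0)\oplus s(0)$ via the defining identity $s(0)\oplus\lambda_b(s(0))=b$, whereas the paper compares two nested idempotents $a\leq b$ through Proposition \ref{2.2} to force $b\leq a$.
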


\begin{proof}
Let $s$ be a strict square root on EMV-algebra $(M;\vee,\wedge,\oplus,0)$. Let $a\in\mI(M)$ be such that $s(0)\leq a$. Then $s_a(0)=s(0)\wedge a=s(0)=\lambda_a(s(0))$. We claim that $a$ is a top element of $M$. Choose $a\leq b\in\mI(M)$. By the assumption, $s_b(0)=s(0)\wedge b=s(0)=\lambda_b(s(0))$, hence
 Proposition \ref{2.2} implies
$s(0)=\lambda_b(s(0))=\lambda_a(s(0))\vee \lambda_b(a)=s(0)\vee \lambda_b(a)$. That is, $\lambda_b(a)\leq s(0)$. Also
$b=a\vee\lambda_b(a)\leq a\vee s(0)\leq a\vee a=a$. Therefore, $a$ is the top element of $M$.
\end{proof}

\begin{cor}\label{strict-N}
{\rm (i)} Let $s$ be a square root on an EMV-algebra $(M;\vee,\wedge,\oplus,0)$. If $s(0)\leq b\in\mI(M)$ and $s_b$ is strict, then for each $a\in \mathcal I(M)$ such that $s(0)\leq a<b$, the square root $s_a$ on the MV-algebra $[0,a]$ cannot be strict. Specially, if $M$ has a top element $1$ and $s$ is a strict square root on the MV-algebra $(M;\oplus,\lam_1,0,1)$, then the only idempotent element $a$ of $M$ with $s(0)\leq a$ is $1$.

{\rm (ii)} Each strict EMV-algebra is a strict MV-algebra.

{\rm (iii)} Let $M$ be an EMV-algebra with a square root $r$ and  let $N$ be its representing EMV-algebra with top element. If $N$ is strict, then $M$ is strict, too.

{\rm (iv)} The homomorphic image of a strict EMV-algebra is also a strict EMV-algebra.
\end{cor}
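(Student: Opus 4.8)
The four statements all rest on the single strictness identity $s(0)=\lam_1(s(0))$ at the top level together with the comparison formulas of Proposition \ref{2.2}, so my plan is to run each part against the computation already performed in Theorem \ref{3.10}. For part (i) I would argue by contradiction exactly in the spirit of that proof. Assume $s(0)\leq a<b$ with both $s_a$ and $s_b$ strict. Since $s(0)\leq a\leq b$, one has $s_a(0)=s(0)\wedge a=s(0)$ and $s_b(0)=s(0)\wedge b=s(0)$, so strictness gives $\lam_a(s(0))=s(0)=\lam_b(s(0))$. Feeding these into Proposition \ref{2.2}(ii), namely $\lam_b(s(0))=\lam_a(s(0))\oplus\lam_b(a)$, yields $s(0)=s(0)\vee\lam_b(a)$, whence $\lam_b(a)\leq s(0)\leq a$; then $b=a\vee\lam_b(a)\leq a$ contradicts $a<b$. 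For the ``Specially'' clause, when $M$ has top element $1$ and $s(0)=\lam_1(s(0))$, Proposition \ref{2.2}(i) shows that for every idempotent $a$ with $s(0)\leq a$ one has $\lam_a(s(0))=\lam_1(s(0))\wedge a=s(0)\wedge a=s(0)=s_a(0)$, so $s_a$ is strict; applying the contradiction just obtained with $b=1$ forces $a=1$.

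Part (ii) is then immediate: by Theorem \ref{3.10} a strict EMV-algebra $M$ has a top element $1$, hence is termwise equivalent to an MV-algebra, and Definition \ref{3.9} applied with $b=1$ gives $s(0)=s_1(0)=\lam_1(s_1(0))$, i.e. $M$ is a strict MV-algebra. For part (iii) I would use Theorem \ref{3.6} to write $r=R|_M$, so in particular $r(0)=R(0)\in M$; since $N$ is strict with top $1_N$, part (ii) gives $R(0)=\lam_{1_N}(R(0))$. For an idempotent $b\in\mI(M)$ with $r(0)\leq b$, the interval $[0,b]$ and the operation $\lam_b$ are computed identically in $M$ and in $N$, so Proposition \ref{2.2}(i) (read in $N$) gives $\lam_b(r(0))=\lam_{1_N}(R(0))\wedge b=R(0)\wedge b=r(0)=r_b(0)$; hence $r_b$ is strict for every such $b$, and $M$ is strict.

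Finally, for part (iv) let $f\colon M\to E$ be an EMV-homomorphism with $M$ strict with square root $r$; by Proposition \ref{Homo} the map $t$ defined by $t(f(x))=f(r(x))$ is a square root on $\im(f)$. By Theorem \ref{3.10} and part (ii), $M$ has top element $1$ and $r(0)=\lam_1(r(0))$, while $f(1)$ is the top element of $\im(f)$ because $f$ is order preserving. Since $f$ is an EMV-homomorphism it preserves $\lam$ on $[0,1]$, so $\lam_{f(1)}(t(f(0)))=\lam_{f(1)}(f(r(0)))=f(\lam_1(r(0)))=f(r(0))=t(f(0))$; thus $t$ is a strict square root on the MV-algebra $\im(f)$. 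By the ``Specially'' clause of (i) the only idempotent of $\im(f)$ above $t(f(0))$ is its top element $f(1)$, so a strict MV-algebra is automatically a strict EMV-algebra, and $\im(f)$ is strict.

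The routine bookkeeping is light; the only genuinely delicate points are the correct invocation of Proposition \ref{2.2} in (i)--(iii) — in particular recognizing that the contradiction in (i) is precisely the mechanism that both forbids intermediate strict levels and, in the top-element case, forces the absence of proper idempotents above $s(0)$ — and the observation in (iii) that idempotents and their relative complements are inherited unchanged from $N$ to $M$. I expect (i) to carry the real content, with (ii)--(iv) following by transport of the identity $s(0)=\lam_1(s(0))$ through Theorems \ref{3.6}, \ref{3.10} and Proposition \ref{Homo}.
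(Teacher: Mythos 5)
Your argument is correct throughout. Parts (i) and (ii) coincide with the paper's own proof, which (like yours) reuses the mechanism of Theorem \ref{3.10}: Proposition \ref{2.2} converts joint strictness of $s_a$ and $s_b$ into $\lambda_b(a)\le s(0)\le a$ and hence $b\le a$; you in fact write out the general-$b$ case that the paper explicitly omits (it proves ``only its second part''), and your derivation that $s_a$ is automatically strict when $s$ is strict is exactly the step the paper attributes, somewhat misleadingly, to Proposition \ref{3.2}(viii). Where you genuinely diverge is in (iii) and (iv). For (iii) the paper proves the stronger fact $M=N$: if $M\ne N$, then $N=M\cup M'$ with $M\cap M'=\emptyset$, and $s(0)=\lambda_1(s(0))$ would force $s(0)$ to lie both in the image of $M$ and in the set of complements, so $s(0)\notin N$, a contradiction; strictness of $M$ is then immediate. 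You instead verify Definition \ref{3.9} for $M$ directly, transporting $\lambda_b$ from $N$ via Proposition \ref{2.2}(i); this is legitimate because $M$ is an ideal of $N$, hence downward closed, so $[0,b]$ and $\lambda_b$ agree in the two algebras. Both routes are valid: the paper's buys the extra structural conclusion that a strict representing algebra forces $M=N$ (so $M$ already had a top element), while yours is a more routine verification that never leaves the definition. For (iv) the paper outsources the key step to \cite[Thm 3.2]{Amb}, whereas you reprove it internally: Proposition \ref{Homo} supplies the square root $t$ on $\im(f)$, preservation of $\lambda$ under EMV-homomorphisms gives $\lambda_{f(1)}(t(0))=t(0)$, and the ``Specially'' clause of (i) bridges from ``strict MV-algebra'' back to Definition \ref{3.9}, since the top is the only idempotent above $t(0)$. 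That last bridge is needed but glossed over in the paper, so your self-contained version is, if anything, slightly more careful.
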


\begin{proof}
For (i), we prove only its second part. Let $M$ have a top element $1$ and let $s$ be a strict square root on the MV-algebra $(M;\oplus,\lam_1,0,1)$
(we use $x'$ instead of $\lam_1(x)$ for all $x\in M$).
Let $a$ be an idempotent element of $M$ such that $s(0)\leq a<1$.
By Proposition \ref{3.2}(viii), $s_a:[0,a]\to [0,a]$ sending $x$ to $s(x)\wedge a$ is strict. Then we have
$s(0)=s_a(0)=\lam_a(s_a(0))=\lam_a(s(0))$. By the assumption, $s(0)'=s(0)$ which implies that
$s(0)=s(0)'=\lam_a(s(0))\vee a'=s(0)\vee a'$, so that $a'\leq s(0)$. On the other hand,
$1=a\vee a'\leq a\vee s(0)\leq a$ which is a contradiction.

(ii) The proof of the second part is clear by (i).

(iii) Let $N$ be strict. Then there is a square root $s:N\to N$ such that $s(0)'=s(0)$. Since $s\big|_M:M\to M$ is a square root
on $M$, by the note just before, Proposition \ref{3.2}, $r=s\big|_M$. If $N=M$, then the proof is complete.
Suppose that $N\neq M$. Since $N=M\cup M'$ and $M\cap M'=\emptyset$ (otherwise, $1\in M$ which implies that $N=M$),
$s(0)$ can not belong to $M\cup M'=N$, that is a contradiction. Therefore, $M=N$ and $M$ is strict.

(iv) By Proposition \ref{Homo}, the homomorphic image of an EMV-algebra $M$ with square roots is again an EMV-algebra with square roots. By (ii), $M$ is with top element so it is equivalent to an MV-algebra with square roots. We get the statement by applying \cite[Thm 3.2]{Amb}.
\end{proof}

\begin{thm}\label{3.11}
Let $s:M\to M$ be a square root on an EMV-algebra $(M;\vee,\wedge,\oplus,0)$. Then only one of the following statements holds:
\begin{itemize}[nolistsep]
\item[{\rm (i)}] The EMV-algebra $M$ is a generalized Boolean algebra.
\item[{\rm (ii)}] The EMV-algebra $M$ is a strict EMV-algebra.
\item[{\rm (iii)}] The EMV-algebra $M$ is isomorphic to the direct product $M_1\times M_2$, where $M_1$ is a generalized Boolean algebra and $M_2$ is a strict EMV-algebra. 
\end{itemize}
\end{thm}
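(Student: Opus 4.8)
The plan is to reduce the trichotomy to a structural decomposition based on the behavior of the square root on the lattice of idempotents, using the classification of MV-algebras with square roots from \cite{Hol} as the local engine. The key observation is that $s(0)$ completely controls which case we land in: by Theorem \ref{3.5}, $M$ is a generalized Boolean algebra if and only if $s(0)=0$, so case (i) is exactly the situation $s(0)=0$. Thus I would first dispose of this case immediately, and then assume $s(0)\neq 0$ throughout the remainder, aiming to show that we fall into (ii) or (iii).

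Next I would analyze the idempotent element $a:=s(s(0))$ (or more precisely, invoke Remark \ref{rmk-cor} to locate a convenient idempotent $a\in\mI(M)$ with $s(0)\leq a$, noting that $s(s(0))$ is the relevant bound appearing in Proposition \ref{pr:3.5} and Proposition \ref{3.2}(xiii)). The decisive dichotomy is whether or not this $a$ is already a top element of $M$. If it is, then $M$ has a top element, $s$ becomes a strict MV-algebra square root in the sense of Definition \ref{3.9} via the strictness condition $s_a(0)=\lambda_a(s_a(0))$, and we are in case (ii) by Theorem \ref{3.10} together with Corollary \ref{strict-N}(ii). If it is not a top element, I would apply the decomposition of Corollary \ref{3.8} with this idempotent $a$ to write $M\cong M_1\times M_2$ where $M_1=[0,a]$ and $M_2=\bigsqcup_{a\leq b\in\mI(M)}[0,\lambda_b(a)]$. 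The point is to show that in the non-strict case the square root splits compatibly: $M_1$ carries the "strict part" (since $s(0)\leq a$ concentrates the non-idempotency there) and $M_2$ carries the purely Boolean part.

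The main technical step, and the one I expect to be the principal obstacle, is verifying that the square root $s$ respects the decomposition $M\cong M_1\times M_2$ and that the two factors have the claimed types. Since $\varphi$ in Corollary \ref{3.8} is an isomorphism of EMV-algebras and hence preserves $\odot$, the transported map $r'(x_1,x_2)=(s_a(x_1),s_{M_2}(x_2))$ must be the (unique) square root on the product; here I would use the uniqueness of square roots noted just before Proposition \ref{3.2} to identify it with $\varphi\circ s\circ\varphi^{-1}$. It then remains to show $s(0)\leq a$ forces $s$ to vanish at $0$ on the $M_2$-coordinate, i.e. $\varphi(s(0))=(s(0),0)$, so that the induced square root on $M_2$ satisfies $s_{M_2}(0)=0$; by Theorem \ref{3.5} this makes $M_2$ a generalized Boolean algebra. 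Simultaneously, on $M_1=[0,a]$ the restriction $s_a$ must be strict, which is where the hypothesis "$a$ is not a top element but $s(0)\neq 0$" is genuinely used, invoking H\"ohle's trichotomy \cite[Thm]{Hol} for the MV-algebra $[0,a]$ to exclude the Boolean and mixed subcases once $a$ is chosen minimally above $s(0)$.

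Finally I would argue mutual exclusivity of the three cases to justify the phrase "only one of the following": (i) is characterized by $s(0)=0$ and forces every element to be idempotent, whereas (ii) and (iii) require $s(0)\neq 0$; a strict EMV-algebra has a top element by Theorem \ref{3.10} and contains a cyclic element of order $2^{n+1}$ for every $n$ (via Corollary \ref{strict-N}(ii) and \cite{Amb}), so it cannot be a nontrivial product with a Boolean factor having no such elements, separating (ii) from (iii). The delicate part throughout is the bookkeeping needed to confirm that $\varphi$ intertwines $s$ with the coordinatewise square root; once that naturality is established, each factor's classification follows from the MV-algebra theory already cited, and the exhaustiveness of the trichotomy follows from the fact that the choice of $a$ above $s(0)$ either stabilizes (yielding a top element and case (ii)) or splits off a nonzero Boolean complement (yielding case (iii)).
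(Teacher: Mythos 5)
Your proposal has the right skeleton: dispose of case (i) by Theorem \ref{3.5}, decompose $M$ along an idempotent above $s(0)$ via Corollary \ref{3.8}, transport the square root coordinatewise by uniqueness, and use Theorem \ref{3.5} on the factor where the induced square root sends $0$ to $0$. Indeed your computation $\varphi(s(0))=(s(0),0)$, hence $s_{M_2}(0)=0$ and $M_2$ is a generalized Boolean algebra, is sound. But the pivotal case split is broken. You take $a$ to be merely ``a convenient idempotent with $s(0)\le a$'' (note also that $s(s(0))$ itself is in general not idempotent) and then branch on whether $a$ is a top element, claiming that if $a$ is the top then $s$ is strict ``by Theorem \ref{3.10} together with Corollary \ref{strict-N}(ii).'' This is backwards: Theorem \ref{3.10} says strict implies top, not the converse, and the implication you need is false. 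Take $M=\{0,1\}\times[0,1]$ with $s(x_1,x_2)=(x_1,(x_2+1)/2)$; then $s(0)=(0,1/2)$, the idempotent $a=(1,1)$ is the top element and lies above $s(0)$, yet $\lambda_a(s(0))=(1,1/2)\ne s(0)$, so $M$ is not strict---it belongs to case (iii). Thus with your choice of $a$, the top-element branch can reach a wrong conclusion.

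What rescues your argument is that $a$ must be the \emph{least} idempotent above $s(0)$, namely $a=s(0)\oplus s(0)$: it is idempotent because, by Proposition \ref{3.2}(xiii), $\lambda_b(s(0))\odot\lambda_b(s(0))$ is idempotent for any idempotent $b\ge s(s(0))$, and $s(0)\oplus s(0)$ is its complement in $[0,b]$; minimality follows from $s(0)\oplus s(0)\le b\oplus b=b$ for every idempotent $b\ge s(0)$. For this particular $a$ one gets strictness of $[0,a]$ for free: $s(0)\oplus s(0)=a$ gives $\lambda_a(s(0))\le s(0)$, while $s(0)\odot s(0)=0$ gives $s(0)\le\lambda_a(s(0))$, so $\lambda_a(s_a(0))=s_a(0)$. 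Then both branches work: if $a$ is the top, every idempotent $b\ge s(0)$ satisfies $s(0)\oplus s(0)\le b\le a$, so $b=a$ and $M$ is strict (case (ii)); if not, Corollary \ref{3.8} plus your transport argument gives $M\cong[0,a]\times M_2$ with $[0,a]$ strict and $M_2$ a nontrivial generalized Boolean algebra (case (iii)). You gesture at this only in the parenthetical ``once $a$ is chosen minimally above $s(0)$,'' but you never prove such a minimal idempotent exists, never identify it as $s(0)\oplus s(0)$, and never carry out the strictness computation---these are exactly the missing steps. For comparison, the paper avoids the top-element dichotomy altogether: it splits on whether $\lambda_b(s(0))=s(0)$ for \emph{all} idempotents $b\ge s(0)$ (which is the definition of strictness), and in the negative case applies H\"ohle's decomposition inside each $[0,b]$ and glues the Boolean pieces $[0,t^b]$ into $\bigsqcup_b[0,t^b]$; the resulting decomposition coincides with the one your corrected argument would produce.
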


\begin{proof}
If $s(0)=0$, then by Theorem \ref{3.5}, $M$ is a generalized Boolean algebra. So, let $s(0)\neq 0$.

Case 1. For each $b\in\mI(M)$ with
$s(0)\leq b$, we have $\lambda_b(s(0))=s(0)$, then $M$ is strict.

Case 2. There exists an idempotent element $a\in \mI(M)$ such that
$s(0)\leq a$ and $\lambda_a(s(0))\neq s(0)$, which means that $([0,a];\oplus,\lambda_a,0,a)$ with the square root $s_a$ is not strict.
By, \cite[Thm 2.21 and its proof]{Hol}, there is $t^a\in\mI(M)$ such that $t^a\leq a$, $[0,a]\cong [0,t^a]\times [0,\lam_a(t^a)]$, where
$[0,t^a]$ is a Boolean algebra and $[0,\lam_a(t^a)]$ is a strict MV-algebra with the square root $s_{\lam_a(t^a)}$.
In the proof of \cite[Thm 2.21]{Hol}, it was proved that
$t^a:=\lam_a(s(0))\odot \lam_a(s(0))$.

For each $a\leq b\in\mI(M)$, the MV-algebra $[0,b]$ with the square root $s_b$ is neither a Boolean algebra nor a strict MV-algebra. Indeed, if it is a Boolean algebra, then $s_b(0)=0$ implies that $s_b(0)=s(0)=s(0)\wedge a=s_a(0)=0$ which is a contradiction.
Otherwise, if it is a strict MV-algebra, then $\lam_a(s_a(0))=\lam_b(s_b(0))\wedge a=s(0)\wedge a=s_a(0)$ which is also a contradiction,
(note that $s(0)\leq a\leq b$, so $s_b(0)=s_a(0)=s(0)$). So, there exists $t^b\leq b$ such that $[0,b]\cong [0,t^b]\times [0,\lam_b(t^b)]$, where
$[0,b]$ is a Boolean algebra and $[0,\lam_b(t^b)]$ is strict. Note that $a\leq b$ implies that $t^a\leq t^b$.
Moreover, $\lam_a(t^a)=\lam_a\big(\lam_a(s(0))\odot \lam_a(s(0))\big)=s(0)\oplus s(0)$ and $\lam_b(t^b)=\lam_b\big(\lam_b(s(0))\odot \lam_b(s(0))\big)=s(0)\oplus s(0)$ (since $s(0)\leq a\leq b$).

Clearly, $\{([0,t^a];\oplus,\lambda_{t^a},0,t^a)\mid a\leq b\in\mI(M)\}$ is a family of nested MV-algebras.
Set $M_1:=\bigsqcup_{a\leq b\in \mI(M)}[0,t^b]$. By \cite[Sec. 3]{Dvz4}, it is an EMV-algebra which is a
generalized Boolean algebra (since each element of $M_1$ is idempotent). Now, set $M_2:=[0,\lambda_a(t^a)]$, which is strict.

Define $\varphi:M\to M_1\times M_2$ by $\varphi(x)=(x\wedge t^b,x\wedge \lambda_a(t^a))$ where $x,a\leq b\in\mI(M)$.
Let $x\in M$ and $b,c\in\mI(M)$ such that $a,x\leq b,c$. 
By the first part of the proof, $\lambda_b(t^b)=\lambda_c(t^c)=\lambda_a(t^a)$.
Since
$x\wedge (t^c\vee \lambda_b(t^b))=x\wedge (t^c\vee \lambda_c(t^c))=x=x\wedge (t^b\vee \lambda_b(t^b))$
and $\lambda_b(t^b)$ are disjoint with $t^c$ and $t^b$, we get $x\wedge t^c=x\wedge t^b$ which means $\varphi$ is well-defined.
Similarly to the proof of Theorem \ref{3.7}, we can show that $\varphi$ is an isomorphism.
\end{proof}

In Proposition \ref{pr:uniq}, we show that in the case (iii) of the latter theorem, we have uniqueness of the decomposition $M\cong M_1\times M_2$.

Now, we prove that if $M$ is an EMV-algebra with a square root $r$ and $N$ is its representing EMV-algebra with top element, then $N$ has a square root $R$ such that $R\big|_{M}=r$.

\begin{thm}\label{3.12}
Let $(M;\vee,\wedge,\oplus,0)$ be an EMV-algebra  and $(N;\vee,\wedge,\oplus,0)$ be its representing EMV-algebra with top element. If $r:M\ra M$ is a square root on $N$, there exists a square root $R:N\to N$ such that $r:=R\big|_{M}$.
\end{thm}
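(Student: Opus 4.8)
The plan is to build $R$ explicitly from $r$, exploiting the decomposition $N=M\cup M'$ furnished by the Basic Representation Theorem \ref{2.4}, where $M'=\{x'\mid x\in M\}$ and $'$ denotes complementation in the MV-algebra $N$ with top element $1$. First I would record that this union is disjoint: since $M$ is a proper (maximal) ideal of $N$ we have $1\notin M$, and since $x\oplus x'=1$ holds in every MV-algebra, an element and its complement cannot both lie in the ideal $M$. Hence every $x\in N$ belongs to exactly one of $M$, $M'$, and I may define $R$ by cases without ambiguity: $R(x):=r(x)$ for $x\in M$, and $R(a'):=r(a)'\oplus r(0)$ for $a\in M$. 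The shape of the second clause is forced by Proposition \ref{pr:3.5}(i) applied in $N$ (with the idempotent $1$): any square root on $N$ extending $r$ must send $a'$ to $r(a)'\oplus r(0)$.

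The computational heart is a single identity. Fix $a\in M$ and pick $b\in\mI(M)$ with $a,r(a),r(0),r(r(0))\leq b$ (possible by fullness, axiom (E4)); on $[0,b]$ the map $r_b$ of Proposition \ref{3.2}(viii) is a square root and agrees with $r$ on the relevant elements. Using Lemma \ref{2.3} to identify the $N$-complement $r(0)'$ with $\lam_b(r(0))$ (both idempotents $b\leq 1$ dominate $r(0)$), I get $R(a')'=r(a)\odot r(0)'=r(a)\ominus r(0)$, an element of $M$; in particular $R(a')\in M'$, so the two clauses are consistent. Next, Proposition \ref{pr:3.5}(i) in $[0,b]$ gives $r(\lam_b(a))=\lam_b(r(a))\oplus r(0)$, whence $\lam_b\big(r(\lam_b(a))\big)=r(a)\ominus r(0)=R(a')'$; applying (Sq1) for $r_b$ to $\lam_b(a)$ and taking complements then yields
\[
(r(a)\ominus r(0))\oplus(r(a)\ominus r(0))=\lam_b\big(\lam_b(a)\big)=a .
\]

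With this identity, (Sq1) for $R$ is immediate: for $x\in M$ it is just (Sq1) for $r$ (the embedding preserves $\odot$), and for $x=a'\in M'$,
\[
R(a')\odot R(a')=\big(R(a')'\oplus R(a')'\big)'=\big((r(a)\ominus r(0))\oplus(r(a)\ominus r(0))\big)'=a' .
\]
For (Sq2) I would argue by cases on where $x$ and $y$ sit. If $x\in M$, then $y\odot y\leq x$ forces $y\odot y\in M$, so $y\in M$ (an element of $M'$ squares into $M'$), and the implication is (Sq2) for $r$. If $x=a'\in M'$ and $y=d'\in M'$, then $y\odot y\leq a'$ means $a\leq d\oplus d$; rewriting this inside a large enough $[0,b]$ as $\lam_b(d)\odot\lam_b(d)\leq\lam_b(a)$ and applying (Sq2) for $r_b$ gives $R(a')'=\lam_b\big(r(\lam_b(a))\big)\leq d$, i.e. $y\leq R(a')$. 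If $x=a'\in M'$ and $y\in M$, then $c:=y\odot y\in M$ satisfies $c\odot a=0$, i.e. $c\leq\lam_b(a)$ in $[0,b]$; (Sq2) for $r_b$ yields $y\leq r(\lam_b(a))=\lam_b(r(a))\oplus r(0)$, and since $\lam_b(r(a))\leq r(a)'$ by Proposition \ref{2.2}(i), this bound is $\leq r(a)'\oplus r(0)=R(a')$.

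The main obstacle is precisely the (Sq2) verification for $x\in M'$: there the inequalities live ``globally'' in $N$ and involve the complement $'$, which is not an operation of the EMV-algebra $M$. The crux is to transport every such inequality into a fixed interval $[0,b]$ with $b\in\mI(M)$, where $r_b$ is a genuine MV-algebra square root, by systematically replacing the $N$-complement with $\lam_b$; Lemma \ref{2.3} (independence of $\odot\,\lam$ from the bounding idempotent) and Proposition \ref{2.2}(i) (the comparison $\lam_b\leq\lam_1$) are exactly the tools that make this translation legitimate. One must be careful here, because the naive implication ``$y\odot y\leq z\odot z\Rightarrow y\leq z$'' fails in general MV-algebras; the argument genuinely uses that $R(a')$ arises from the square root $r_b$, not merely that its square equals $a'$.
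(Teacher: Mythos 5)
Your proposal is correct, but it takes a genuinely different route from the paper. The paper reduces Theorem \ref{3.12} to its classification theorem (Theorem \ref{3.11}): it splits into the three cases where $M$ is a generalized Boolean algebra (then $N$ is Boolean and $R=\id_N$), where $M$ is strict (then Corollary \ref{strict-N} forces $N=M$, so there is nothing to prove), and where $M\cong M_1\times M_2$ (then $N\cong B\times M_2$ with $B$ a Boolean algebra, and $R$ is built componentwise from $\id_B$ and the square root on $M_2$). Your argument instead constructs $R$ directly on $N=M\cup M'$ by the formula $R(a')=r(a)'\oplus r(0)$, which you correctly observe is forced by Proposition \ref{pr:3.5}(i), and then verifies (Sq1) and (Sq2) by hand; the key identity $\big(r(a)\ominus r(0)\big)\oplus\big(r(a)\ominus r(0)\big)=a$ and the three-case check of (Sq2) are all sound, including the delicate transport of $N$-inequalities into intervals $[0,b]$ via Lemma \ref{2.3} and Proposition \ref{2.2}(i) (with the understood proviso that $b$ is enlarged to dominate the auxiliary elements $y$, $d$ in each case, which is harmless since the resulting expressions do not depend on $b$). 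What the two approaches buy: the paper's proof is short once the heavy structural machinery (Theorem \ref{3.11}, hence H{\"o}hle's MV-classification, Theorem \ref{3.10}, Corollary \ref{strict-N}) is in place, and it situates the extension result inside the Boolean/strict/product picture; your proof is self-contained and more elementary, avoids the classification entirely, produces an explicit closed-form for $R$ on $N\setminus M$, and gives uniqueness of the extension essentially for free (the formula for $R(a')$ being forced). Your version would even serve as an independent route to Corollary \ref{3.14}. The only cosmetic gap is that you should dispose of the trivial case first, exactly as the paper does: if $M$ has a top element then $N=M$ and $R=r$, and only in the proper case is $N=M\sqcup M'$ a disjoint union.
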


\begin{proof}
If $M$ has a top element, the statement is trivial. So let $M$ have no top element. Without loss of generality, we can assume that $M\subset N$. Since $M$ has a square root, by Theorem \ref{3.11}, there are three cases.

(1) $M$ is a generalized Boolean algebra.
Then $N$ is a Boolean algebra. By Theorem \ref{3.5}, $r(0)=0$, so for each $x\in M$, we have
$r(x)=x\vee r(0)=x$ (by Remark \ref{rmk-cor}). On the other hand, since $N$ is a Boolean algebra, the identity map $R:N\to N$ is a square root.
Hence, $N$ has a square root $R$ and clearly  $R\big|_{M}(x)=x=r(x)$ for all $x\in M$.

(2) $M$ is a strict EMV-algebra. By Corollary \ref{strict-N}, $M$ has a top element and $N=M$. So, the proof for this case is clear.

(3) $M$ is isomorphic to the direct product $M_1\times M_2$, where $M_1$ is a generalized Boolean algebra and $M_2$ is a strict EMV-algebra. By Theorem \ref{3.10}, we know $M_2$ has a top element, so we can easily show that $N$ is isomorphic to $B\times M_2$, where $B$ is a Boolean algebra, and $M_1$ is a maximal ideal of the Boolean algebra $B$.
Since $B$ is a Boolean algebra, the identity map $s_1:B\to B$ is a square root. Let $s_2$ be the square root on the strict EMV-algebra $M_2$.
Easy calculations show that $R:B\times M_2\to B\times M_2$ defined by $R(x,y)=(s_1(x),s_2(x))$ is a square root. Proposition \ref{Homo} implies that
$N$ has a square root $T:N\to N$ too. Now, by Theorem \ref{3.6}, the map $T\big|_{M}M\to M$ is a square root on $M$.
It follows from the note after Definition \ref{3.1} that $T\big|_{M}=r$.

From (1)--(3), we conclude that if $M$ has a square root, so does its representing EMV-algebra with top element $N$.
\end{proof}

We strengthen (iii) of Theorem \ref{3.11}:

\begin{prop}\label{pr:uniq}
Let an EMV-algebra $M$ with square root be isomorphic to the direct product $M_1\times M_2$, where $M_1$ is a generalized Boolean algebra, and $M_2$ is a strict EMV-algebra. In such a case, $M_1$ and $M_2$ are uniquely determined by $M \cong M_1 \times M_2$ up to isomorphism.
\end{prop}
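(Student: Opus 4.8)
The plan is to pin down both factors intrinsically, i.e.\ by data depending only on $M$ and not on the chosen decomposition. The crucial point is that the square root $s$ of $M$ is unique (by the remark after Definition \ref{3.1}), so any element built from $s$ and the EMV-operations is an invariant of $M$. Since we are in case (iii) of Theorem \ref{3.11}, we have $s(0)\neq 0$ (otherwise $M$ is a generalized Boolean algebra by Theorem \ref{3.5}). I would introduce the single invariant
\[
w:=s(0)\oplus s(0),
\]
and prove that $M_2\cong[0,w]$ and $M_1\cong\bigsqcup_{w\leq c\in\mI(M)}[0,\lam_c(w)]$ for every admissible decomposition; since the right-hand sides depend only on $w$ (hence only on the unique $s$), both factors are thereby determined up to isomorphism.

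First I would compute $w$ in a fixed decomposition $M\cong M_1\times M_2$. The square root of a direct product is the product of the square roots of the factors (as in Example \ref{3.4}(iv), and checked componentwise for EMV-algebras since $\odot$ and $\leq$ are componentwise), and the unique square root of the generalized Boolean algebra $M_1$ is $\id_{M_1}$ (Example \ref{3.4}(i), Proposition \ref{3.2}(xi)); hence $s(x_1,x_2)=(x_1,s_2(x_2))$, where $s_2$ is the strict square root of $M_2$. By Theorem \ref{3.10}, $M_2$ has a top element $1_{M_2}$, and strictness gives $s_2(0)=\lam_{1_{M_2}}(s_2(0))=s_2(0)'$, so $s_2(0)\oplus s_2(0)=s_2(0)\oplus s_2(0)'=1_{M_2}$. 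Therefore $s(0)=(0,s_2(0))$ and $w=(0,1_{M_2})$, an idempotent of $M$. Consequently $[0,w]=\{0\}\times M_2$, and the second projection is an EMV-isomorphism $[0,w]\cong M_2$. As $w$ does not depend on the decomposition, every strict factor is isomorphic to $[0,w]$, so $M_2$ is unique up to isomorphism.

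Next I would recover $M_1$ by applying Corollary \ref{3.8} to the nonzero idempotent $a:=w$ (note $w\neq 0$, since $w=0$ would force $s(0)=0$, which is excluded in case (iii)). This yields $M\cong[0,w]\times\bigsqcup_{w\leq c\in\mI(M)}[0,\lam_c(w)]$. Computing the second factor in the fixed decomposition: an idempotent $c\geq w$ has the form $c=(c_1,1_{M_2})$ with $c_1\in M_1$ (since $\mI(M)=M_1\times\mI(M_2)$ and $c_2\geq 1_{M_2}$ forces $c_2=1_{M_2}$), whence $\lam_c(w)=(c_1,0)$ and $[0,\lam_c(w)]=[0,c_1]\times\{0\}$. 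Taking the union over all $c_1\in M_1$ gives $\bigsqcup_{w\leq c}[0,\lam_c(w)]=M_1\times\{0\}\cong M_1$. Since the EMV-algebra $\bigsqcup_{w\leq c\in\mI(M)}[0,\lam_c(w)]$ depends only on $M$ and $w$, the generalized Boolean factor $M_1$ is likewise unique up to isomorphism, and the proposition follows.

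The main obstacle is the first identification: recognizing that $w=s(0)\oplus s(0)$ is exactly the (image of the) top element of the strict part $M_2$. Once this is in hand, the remainder is the routine computation of $\lam_c(w)$ inside the product together with the decomposition machinery of Corollary \ref{3.8}; the only care needed is to verify that each displayed isomorphism is genuinely expressed through $w$ alone, so that it transfers between two \emph{a priori} different decompositions.
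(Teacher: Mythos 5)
Your proof is correct, but it takes a genuinely different route from the paper's. You work intrinsically: since the square root $s$ of $M$ is unique and transported by any isomorphism, the element $w=s(0)\oplus s(0)$ is an invariant of $M$; you then verify that in any decomposition $M\cong M_1\times M_2$ one has $s=\id_{M_1}\times s_2$, hence $w$ corresponds to $(0,1_{M_2})$, so $M_2\cong [0,w]$ and $M_1\cong\bigsqcup_{w\leq c\in\mI(M)}[0,\lam_c(w)]$, both built from $M$ and $w$ alone. The paper instead splits into cases: if $M$ has a top element it simply cites H\"ohle's uniqueness theorem for MV-algebras; otherwise it passes to the representing EMV-algebra $N$ with top element (Theorem \ref{2.4}), extends the square root to $N$ (Theorem \ref{3.12}), embeds $M_1,M_1'$ into Boolean algebras $B_1,B_2$, observes that $B_1\times M_2$ and $B_2\times M_2'$ are both representing algebras for $M$, applies H\"ohle's uniqueness to $N$ to get $B_1\cong B_2$ and $M_2\cong M_2'$, and finally recovers $M_1\cong (M_1\times M_2)/M_2\cong M_1'$ by quotients. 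What each buys: your argument is uniform (no case split on the existence of a top element), avoids the representation machinery entirely, and in effect reproves the MV-algebra uniqueness result rather than quoting it -- identifying $w$ as the top of the strict part is the one nontrivial insight it requires; the paper's proof is shorter given the machinery already developed and delegates the real work to the known uniqueness theorem for MV-algebras with square roots.
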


\begin{proof}
Let $M\cong M_1\times M_2\cong M'_1\times M'_2$, where $M_1,M_1'$ are generalized Boolean algebras and $M_2,M_2'$ are strict EMV-algebras and therefore with top elements, see Theorem \ref{3.11}. If $M$ is with top element, $M$ is equivalent to an MV-algebra, and the uniqueness follows from \cite[Thm 2.21]{Hol}.

Assume that $M$ has no top element and let $r$ be a square root on $M$. From Theorem \ref{2.4}, $M$ can be embedded into an EMV-algebra $N$ with top element as a maximal ideal of $N$, and every element $y\in N$ either is in the image of $M$ or is a complement of some element from $M$. Theorem \ref{3.12} asserts that the square root $r$ can be uniquely extended to a square root $R$ on $N$.
The generalized Boolean algebras $M_1$ and $M'_1$ are without top elements and they can be by Theorem \ref{2.4} embedded into Boolean algebras $B_1$ and $B_2$ with top elements, respectively, see also \cite[Thm 2.2]{CoDa}. We claim that $B_1\times M_2$ and $B_2\times M'_2$ are EMV-algebras with top element representing $M_1\times M_2$ and $M_1'\times M_2'$, respectively, because $M_1\times M_2$ is a maximal ideal of $B_1\times M_2$ and every element of $B_1\times M_2$ either belongs to the image of $M_1\times M_2$ or is a complement of some element from the image of $M_1\times M_2$. Similar reasonings also hold for $B_2\times M'_2$. In other words, $B_1\times M_2$ and $B_2\times M'_2$ are EMV-algebras with top element representing $M$ and $N\cong B_1\times M_2\cong B_2\times M'_2$. The uniqueness of the decomposition for $N$, see \cite[Thm 2.21]{Hol}, entails that $B_1\cong B_2$, $M_2\cong M'_2$. We have $M_1\cong (M_1\times M_2)/M_2\cong (M_1'\times M'_2)/M'_2 \cong M_1'$.
\end{proof}

From Theorem \ref{3.12} it can be easily obtained the following corollary.

\begin{cor}\label{3.14}
If $(M;\vee,\wedge,\oplus,0)$ is an EMV-algebra with a square root and $(N;\vee,\wedge,\oplus,0)$ is its representing EMV-algebra with top element, then $N$ satisfies only one of the statements of Theorem \ref{3.11}.
\end{cor}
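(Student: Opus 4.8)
The plan is to derive Corollary \ref{3.14} directly from the classification in Theorem \ref{3.11} together with the extension result Theorem \ref{3.12}. The key observation is that if $M$ has a square root $r$, then by Theorem \ref{3.12} its representing EMV-algebra $N$ with top element also carries a square root $R$ (indeed $R\big|_M = r$). Since $N$ is itself an EMV-algebra with a square root, Theorem \ref{3.11} applies verbatim to $N$, so $N$ satisfies exactly one of the three statements (i)--(iii). This is the whole content, and the proof should be only a few lines.

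First I would invoke Theorem \ref{3.12} to produce the square root $R$ on $N$. Then I would simply apply Theorem \ref{3.11} to the pair $(N,R)$. The phrase ``only one of the statements'' in the conclusion is guaranteed because Theorem \ref{3.11} itself asserts that exactly one of (i)--(iii) holds for any EMV-algebra with a square root; there is nothing extra to check regarding mutual exclusivity, as that exclusivity is already baked into the statement of Theorem \ref{3.11}.

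The one subtlety worth a sentence is the degenerate case in which $M$ already has a top element: then $N = M$ (the representation is trivial), $r$ is already a square root on $N$, and the conclusion is immediate. In the proper case, Theorem \ref{3.12} does the work of transporting the square root up to $N$, after which Theorem \ref{3.11} classifies $N$. So the expected proof is essentially: \emph{By Theorem \ref{3.12}, $N$ has a square root $R$ extending $r$; hence $N$ is an EMV-algebra with a square root, and Theorem \ref{3.11} applies to $N$, giving exactly one of the three listed alternatives.}

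I do not anticipate any genuine obstacle here, since this corollary is a formal consequence of two already-established theorems. The ``hardest'' part is purely bookkeeping: confirming that the hypotheses of Theorem \ref{3.11} (namely, being an EMV-algebra equipped with a square root) are met by $N$, which is exactly what Theorem \ref{3.12} supplies. There is no new calculation and no case analysis beyond what Theorems \ref{3.11} and \ref{3.12} already encapsulate, so the argument is a one-step deduction rather than a construction.
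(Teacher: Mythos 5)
Your proposal is correct and matches the paper's intended argument exactly: the paper states only that the corollary ``can be easily obtained'' from Theorem \ref{3.12}, and the intended one-step deduction is precisely yours---Theorem \ref{3.12} transports the square root $r$ on $M$ to a square root $R$ on $N$, and then Theorem \ref{3.11} applied to $(N,R)$ yields exactly one of the three alternatives. Your handling of the degenerate case $N=M$ and the remark that mutual exclusivity is already part of Theorem \ref{3.11} are both consistent with the paper.
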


Finally, the following proposition helps us to provide some examples of EMV-algebras with square roots.

\begin{prop}\label{pr:exist}
Let  $(M;\vee,\wedge,\oplus,0)$ be a locally complete EMV-algebra. Then $M$ has a square root \iff the following statements hold:
\begin{enumerate}[nolistsep]
\item[{\rm (i)}] The set $\{y\in M\colon y\odot y=0\}$ has an upper bound in $M$
\item[{\rm (ii)}] The mapping $\Delta:M\to M$  defined by $\Delta(x):=x^2$, $x\in M$, is onto.
\end{enumerate}
\end{prop}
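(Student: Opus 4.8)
The plan is to prove both implications by reducing to a formula for a candidate square root in terms of the squaring map $\Delta$. The natural guess, motivated by Example \ref{3.4}(ii) and Remark \ref{rmk-cor}, is that when a square root $r$ exists, then for each $x\in M$ we must have $r(x)=\bigvee\{z\in M\colon z\odot z\leq x\}$, and this supremum should in fact be attained (by (Sq2), $r(x)$ is the greatest element of that set). So the real content is to show that under local completeness, conditions (i) and (ii) are exactly what guarantee that this set has a maximum which behaves like a square root.

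For the forward direction (square root $\Rightarrow$ (i), (ii)), I would argue as follows. Suppose $r$ is a square root on $M$. For (i), observe that by (Sq2) every $y$ with $y\odot y=0\leq 0$ satisfies $y\leq r(0)$, so $r(0)$ is an upper bound of $\{y\colon y\odot y=0\}$; this is immediate. For (ii), I must show $\Delta$ is onto, i.e.\ every $x\in M$ is a square. By Remark \ref{rmk-cor}(i) and Proposition \ref{3.2}(xii), $r(x\odot x)=x\vee r(0)$, which is not quite $x$ in general; instead the clean statement is that $x$ itself equals $r(x)\odot r(x)=\Delta(r(x))$ directly from (Sq1). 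Thus every $x$ is the image under $\Delta$ of $r(x)$, so $\Delta$ is surjective. This direction is short.

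The converse (conditions (i), (ii) $\Rightarrow$ square root exists) is where the work lies, and local completeness is the essential hypothesis. Fix $x\in M$ and choose, using local completeness, an idempotent $a\in\mI(M)$ with $x\leq a$ and $[0,a]$ a complete MV-algebra; using (i) I would also arrange that an upper bound of $\{y\colon y\odot y=0\}$ lies below $a$. Inside the complete MV-algebra $[0,a]$ I define $r(x):=\bigvee\{z\in[0,a]\colon z\odot z\leq x\}$, which exists by completeness. The heart of the argument is to verify $r(x)\odot r(x)=x$, i.e.\ that squaring commutes with this supremum; here I would invoke Lemma \ref{lem3.2}(ii) ($z\odot w\leq (z\odot z)\vee(w\odot w)$) to control cross terms, together with surjectivity of $\Delta$ to guarantee that the supremum is genuinely hit by a square, and the standard completeness/continuity properties of $\odot$ in a complete MV-algebra. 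One then checks (Sq2) is automatic from the definition as a supremum, and checks that the value $r(x)$ is independent of the chosen idempotent $a$ (using Lemma \ref{lem3.2}(i) and Proposition \ref{2.2} to compare the computations in $[0,a]$ and $[0,b]$ for $a\leq b$), so that $r$ is a well-defined map $M\to M$.

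The main obstacle I anticipate is precisely proving $r(x)\odot r(x)=x$ rather than merely $r(x)\odot r(x)\leq x$. The inequality $\leq$ follows from the definition together with Lemma \ref{lem3.2}(ii), but equality requires that the supremum defining $r(x)$ is attained by an actual square root of $x$, and this is exactly where surjectivity of $\Delta$ (condition (ii)) must be combined with completeness: given $x=\Delta(w)=w\odot w$ for some $w$, I would show $w\leq r(x)$ and conversely $r(x)\odot r(x)\leq x$, then use continuity of $\odot$ with respect to suprema in the complete MV-algebra $[0,a]$ to upgrade to equality. Controlling this interplay between the (only finitely additive, not complete) ambient structure and the locally complete blocks $[0,a]$, and checking the well-definedness across different blocks, is the delicate part; the remaining verifications of (Sq1) and (Sq2) are then routine.
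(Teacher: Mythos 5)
Your plan coincides with the paper's own proof in outline: the easy direction (square root $\Rightarrow$ (i), (ii)) is argued identically, and for the converse you, like the paper, define $r(x)$ as a supremum inside a complete block $[0,a]$ containing $x$ and an upper bound of $\{y\in M : y\odot y=0\}$, obtain $r(x)\odot r(x)\le x$ from Lemma \ref{lem3.2}(ii) together with distributivity of $\odot$ over existing suprema, and obtain equality from surjectivity of $\Delta$.

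There is, however, a genuine gap at the step you dismiss as ``automatic.'' Axiom (Sq2) quantifies over all $y\in M$, while your supremum ranges only over $z\in[0,a]$; the definition of $r(x)$ therefore yields $y\le r(x)$ only for those $y$ with $y\odot y\le x$ that already lie in $[0,a]$. For a hypothetical $y\in M$ with $y\odot y\le x$ but $y\not\le a$, nothing is automatic --- on the contrary, since $r(x)\le a$, (Sq2) would force $y\le a$, which is precisely what must be proved. This containment, $\{y\in M : y\odot y\le x\}=\{y\in[0,a] : y\odot y\le x\}$, is the crux of the paper's argument (its equation (\ref{EQS2})), and it is where hypothesis (i) does its real work beyond guaranteeing that $t=\bigvee\{y\in M : y\odot y=0\}$ exists: by Remark \ref{rmk-cor}(ii), $(y\ominus x)\odot(y\ominus x)\le (y\odot y)\ominus x=0$, hence $y\ominus x\le t\le u$ and so $y\le x\oplus t\le a$. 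The same containment is silently needed in your (Sq1) argument as well: to conclude $w\le r(x)$ from the definition of the supremum, the witness $w$ with $w\odot w=x$ supplied by (ii) must first be shown to lie in $[0,a]$ (alternatively, one may replace $w$ by $w\wedge a$, whose square is still $x$ since $a$ is idempotent, but you do not do this either). Once the containment lemma is in place, your remaining steps go through, and as a bonus the well-definedness across blocks that you flag as delicate becomes immediate, because the set being supremized no longer depends on the chosen idempotent $a$, and its supremum computed in $[0,a]$ is also its supremum in $M$.
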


\begin{proof}
Let $M$ be a locally complete EMV-algebra. Assume (i) and (ii), and let
$u$ be an upper bound for the set $\{y\in M\mid y\odot y=0\}$. Due to (i),
$t:=\bigvee\{y\in M\mid y\odot y=0\}$ exists in $M$.
By Lemma \ref{lem3.2}(ii),
\begin{eqnarray}
\label{Eq4} t\odot t=(\!\!\!\!\!\!\!\bigvee_{\{y\in M\colon y^2=0\}}\!\!\!\!\!\!\! y)\odot
(\!\!\!\!\!\!\!\bigvee_{\{z\in M\colon z^2=0\}}\!\!\!\!\!\!\! z)=\bigvee_{\{y,z\in M\colon y^2=z^2=0\}}\!\!\!\!\!y\odot z\leq
\bigvee_{\{y,z\in M\colon y^2=z^2=0\}}\!\!\!\Big((y^2)\vee (z^2)\Big)=0.
\end{eqnarray}
That is $t=\max\{y\in M\mid y\odot y=0\}$. Set $r(0):=t$.

Given $x\in M$, consider the complete MV-algebra $[0,a]$, where $u,x\leq
a\in\mI(M)$. If $y\in M$ is such that $y\odot y\leq x$, then $(y\ominus
x)\odot (y\ominus x)\leq (y\odot y)\ominus x=0$ (see Remark
\ref{rmk-cor}(ii)) and so
$y\ominus x\leq t\leq u$ which means $y\leq x\oplus t\leq a$.
Hence
\begin{eqnarray}
\label{EQS2}\{y\in M\colon y\odot y\leq x\}=\{y\in [0,a]\colon y\odot y\leq x\}.
\end{eqnarray}
Since $M$ is locally complete, the element $\bigvee\{y\in [0,a]\mid y\odot y\leq x\}$ exists in $[0,a]$ and in $M$ as well, and it is the same in both cases. Set
\begin{eqnarray*}
r(x):=\bigvee\{y\in [0,a]\mid y\odot y\leq x\}=\bigvee\{y\in M\colon y\odot y\leq x\},\quad \mbox{ by (\ref{EQS2}) }.
\end{eqnarray*}
In particular, this yields (Sq2).

Analogously to  (\ref{Eq4}), we can show that $r(x)\odot r(x)\le x$:
Using Lemma \ref{lem3.2}(ii),
we have

\begin{eqnarray*}
r(x)\odot r(x)=\bigvee_{\{y,z\in M\colon y^2,z^2\le x\}}y\odot z\leq
\bigvee_{\{y,z\in M\colon y^2, z^2\le x\}}\Big((y^2)\vee (z^2)\Big)\le x.
\end{eqnarray*}
This gives $r(x)=\max\{y\in M\mid y\odot y \le x\}$.

According to (ii), there is $z\in M$ such that $z\odot z=x$, which
entails $x= z\odot z \le r(x)\odot r(x)\le x$ and it gives (Sq1).
Consequently, $r$ is a square root on $M$.

Conversely, let $r$ be a square root on $M$. Then (i) is
straightforward, and for (ii), we have given $x\in M$, there is $y=r(x)$
such that $y\odot y = x$.
\end{proof}

Let us comment Proposition \ref{pr:exist} with two remarks.

\begin{rmk}\label{rm:contra1}
We show that not every locally complete EMV-algebra has a square root. In other words, there is a locally complete EMV-algebra such that the set $\{y\in M\mid y \odot y=0\}$ has no upper bound:

Given any integer $i\ge 1$, let $M_i$ be the MV-algebra of the real interval $[0,1]$ that is a complete one. Define $M=\sum_iM_i$. According to \cite[Ex 3.2(iii)]{Dvz4}, $M$ is a locally complete EMV-algebra without any top element. Define a countable family $\{y_n=(y^n_i)_i\}_n$ of elements of $M$ such that $y^n_i=0$ if $i>n$ and $y^n_i=1/2$ if $i\le n$. Then $y_n\odot y_n=0:=(0_i)_i$ for each $n$, but the set $\{y_n\mid n\ge 1\}$ has no upper bound in $M$. Consequently, $M$ has no square root, see also Example \ref{3.4}(iv).
\end{rmk}

\begin{rmk}\label{rm:contra2}
It can happen that in a locally complete EMV-algebra $M$, the set $\{y\in M\mid y \odot y=0\}$ has an upper bound, but $M$ has no square roots.

For example, given an integer $n\ge 1$, let us define finite MV-algebras $M_n= \{0,1/n,2/n,\ldots,n/n\}$. Since every $M_n$ is trivially a complete MV-algebra, the set $\{y\in M\mid y \odot y=0\}$ has an upper bound. We assert that $M_n$ has square roots if and only if $n=1$. If $n=1$, then $M_1$ is a Boolean algebra, so the identity function on $M_1$ is a square root function. Thus let $n\ge 2$.

If $r_n$ is a square root on $M$, then it is injective, and the range of $r_n$ is finite and linearly ordered, that is, $r_n=\id_n$, the identity on $M_n$. Consequently, $r_n(0)=0$ and by Theorem \ref{3.5}, $M_n$ is a Boolean algebra, a contradiction.

We note that in $M_n$, the condition (ii) of Proposition \ref{pr:exist} is not satisfied: $0\odot 0=0= \frac{1}{n}\odot\frac{1}{n}$.

In addition, if $M$ is a finite EMV-algebra, it has a top element, and if $M$ is not a Boolean algebra, it does not have any square root.
\end{rmk}

Let $M$ and $E$ be two EMV-algebras and $r:M\to M$ and $s:E\to E$ be square roots. If $f:M\to E$ is a homomorphism of EMV-algebras, then
for each $x\in M$, $f(r(x))\odot f(r(x))=f(x)$, and so by (Sq2), $f(r(x))\leq s(f(x))$. We say $f$ {\em preserves square roots} if
$f(r(x))=s(f(x))$ for all $x\in M$. For example, every homomorphism between generalized Boolean algebras preserves square roots. In the next theorem, we show a necessary and sufficient condition that
a homomorphism of EMV-algebras preserves square roots.

\begin{thm}\label{corHome}
Let $M$ and $E$ be two EMV-algebras with square roots $r:M\to M$ and $s:E\to E$, and $f:M\to E$ be a homomorphism of EMV-algebras.
Then $f$ preserves square roots \iff $\im(f)$ is closed under $s$.

Consequently, every surjective homomorphism of EMV-algebras preserves square roots. On the other side, if $M=\{0\}$ $(M=\{0,1\})$, then the embedding $f$ of $M$ into any EMV-algebra $($any EMV-algebra with top element, $f(1)=1)$ that is not a generalized Boolean algebra is not preserving square roots.
\end{thm}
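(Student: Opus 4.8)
The plan is to establish the equivalence first and then read off the two consequences. For the forward implication, suppose $f$ preserves square roots. Then for any $y=f(x)\in\im(f)$ we have $s(y)=s(f(x))=f(r(x))\in\im(f)$, since $r(x)\in M$; hence $\im(f)$ is closed under $s$. This direction is immediate and needs no further work.

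For the converse, assume $\im(f)$ is closed under $s$. The crucial observation is that the restriction $s|_{\im(f)}\colon\im(f)\to\im(f)$ is then a well-defined square root on the EMV-algebra $\im(f)$: condition (Sq1) is inherited from $E$ because $f$ preserves $\odot$, so $\im(f)$ is closed under $\odot$ and $s(y)\odot s(y)=y$ holds for $y\in\im(f)$; condition (Sq2) is likewise inherited, since for $y,z\in\im(f)$ with $z\odot z\leq y$ the inequality $z\leq s(y)$ already holds in $E$ and the order on $\im(f)$ is the restriction of that on $E$. On the other hand, Proposition \ref{Homo} asserts that $t(f(x)):=f(r(x))$ defines a square root on $\im(f)$. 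Because a square root on an EMV-algebra is unique (the note after Definition \ref{3.1}), we conclude $s|_{\im(f)}=t$, whence $s(f(x))=f(r(x))$ for every $x\in M$; that is, $f$ preserves square roots. I expect this appeal to uniqueness---after verifying that $s|_{\im(f)}$ genuinely satisfies both square-root axioms relative to $\im(f)$, and that $\im(f)$ is itself an EMV-algebra as in Proposition \ref{Homo}---to be the main point of the argument.

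The first consequence is then immediate: if $f$ is surjective, then $\im(f)=E$ is trivially closed under $s$, so by the equivalence $f$ preserves square roots. For the final assertion I would invoke Theorem \ref{3.5}, which states that $s(0)=0$ \iff $E$ is a generalized Boolean algebra. If $M=\{0\}$ or $M=\{0,1\}$, then $M$ is a generalized Boolean algebra and $r=\id_M$ (Example \ref{3.4}(i)), so $\im(f)=\{0\}$, respectively $\im(f)=\{0,1\}$; in either case $0\in\im(f)$. Since $E$ is not a generalized Boolean algebra, $s(0)\neq 0$; moreover $s(0)\neq 1$ in the second case, because $s(0)\odot s(0)=0\neq 1=1\odot 1$ (here $1$ is the top element of $E$ and $1\neq 0$ as $E$ is nontrivial). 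Thus $s(0)\notin\im(f)$, so $\im(f)$ is not closed under $s$, and by the equivalence $f$ does not preserve square roots.
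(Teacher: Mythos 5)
Your proof is correct and takes essentially the same approach as the paper: the forward direction is the same one-line computation, and the converse identifies $s\big|_{\im(f)}$ with the square root $t(f(x))=f(r(x))$ from Proposition \ref{Homo} via the uniqueness of square roots. The only difference is that you spell out details the paper leaves implicit, namely the check that $s\big|_{\im(f)}$ genuinely satisfies (Sq1) and (Sq2) on $\im(f)$, and the argument for the final assertion about $M=\{0\}$ and $M=\{0,1\}$ (using Theorem \ref{3.5} to get $s(0)\neq 0$ and the observation that $s(0)=1$ would force $E$ to be trivial), both of which are done correctly.
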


\begin{proof}
First, we assume that $\im(f)$ is closed under $s$. Then $s\big|_{\im(f)}:\im(f)\to \im(f)$ is a square root on the EMV-algebra $\im(f)$.
Since by Proposition \ref{Homo}, the map $t:\im(f)\to \im(f)$ defined by $t(f(x))=f(r(x))$ is a square root, then $t=s$ which implies that
$s(f(x))=t(f(x))=f(r(x))$ for all $x\in M$.

Conversely, let $f$ preserve square roots. For each $y=f(x)\in\im(f)$, we have $s(y)=f(r(x))\in\im(f)$. Therefore, $\im(f)$ is closed under $s$.
\end{proof}

\begin{cor}
Let $f:M_1\to M_2$ be a homomorphism of EMV-algebras and $N_1$ and $N_2$ be the representing EMV-algebras with top element of $M_1$ and $M_2$, respectively. Consider the homomorphism $\overline{f}:N_1\to N_2$ which is induced from $f$ {\rm(see \cite[Prop 6.1]{Dvz})}.
If $R_1:N_1\to N_1$ and $R_2:N_2\to N_2$ are square roots, then $\overline f$ preserves square roots \iff $f$ has this property.
\end{cor}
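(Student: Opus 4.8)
The plan is to reduce the entire equivalence to a pointwise verification of the defining identity for preservation of square roots, exploiting the structure $N_i = M_i \cup M_i'$ coming from Theorem \ref{2.4}. First I would set $r_i := R_i\big|_{M_i}$; by Theorem \ref{3.6} these are square roots on $M_i$, and since square roots are unique (the note after Definition \ref{3.1}), $r_1,r_2$ are exactly the square roots with respect to which "$f$ preserves square roots'' is meant. I would then record the facts I need: regarding $M_i\s N_i$, we have $N_i=M_i\cup M_i'$ (where $'$ is the complement w.r.t. the top of the MV-algebra $N_i$); from the proof of Theorem \ref{3.6}, $R_i(M_i)\s M_i$ and $R_i(M_i')\s M_i'$; and by the construction of the induced map in \cite[Prop 6.1]{Dvz}, $\overline f\big|_{M_1}=f$ together with $\overline f(x')=f(x)'$ for $x\in M_1$, so that $\overline f$ preserves $\oplus$ and $'$ and in particular $\overline f(1)=\overline f(0)'=1$. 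The goal then becomes checking $\overline f(R_1(y))=R_2(\overline f(y))$ for all $y\in N_1$, split into $y\in M_1$ and $y\in M_1'$.

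The direction "$\overline f$ preserves square roots $\Ra$ $f$ preserves square roots'' is the easy one: I would simply restrict the identity to $y=x\in M_1$. Since $R_1(x)=r_1(x)\in M_1$ and $\overline f$ agrees with $f$ there, the left side is $f(r_1(x))$; since $\overline f(x)=f(x)\in M_2$ and $R_2\big|_{M_2}=r_2$, the right side is $r_2(f(x))$. Hence $f(r_1(x))=r_2(f(x))$ for all $x$, which is exactly preservation for $f$.

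For the converse, the case $y=x\in M_1$ is immediate from the hypothesis just as above, so the substance lies in $y=x'\in M_1'$. Here the key tool is the complement formula $R(x')=R(x)'\oplus R(0)$, valid in any MV-algebra with square root $R$: it follows from Proposition \ref{pr:3.5}(i) applied with $a$ the top element, since then $x\ra_a 0=x'$ and $R(x)\ra_a R(0)=R(x)'\oplus R(0)$. Applying it in $N_1$, using $R_1(x),R_1(0)\in M_1$, $\overline f\big|_{M_1}=f$, preservation for $f$, $f(0)=0$, and $R_2\big|_{M_2}=r_2$, I would compute
\[
\overline f(R_1(x'))=\overline f(R_1(x))'\oplus \overline f(R_1(0))=r_2(f(x))'\oplus r_2(0),
\]
while applying the same formula in $N_2$ to $\overline f(x')=f(x)'$ gives
\[
R_2(\overline f(x'))=R_2(f(x)')=r_2(f(x))'\oplus r_2(0).
\]
The two agree, closing the case and the proof.

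I expect the case $y\in M_1'$ to be the main obstacle: it is the only place where genuine content appears, and it depends both on the complement formula above and on the fact that the induced map $\overline f$ truly preserves complements (equivalently $\overline f(1)=1$), which is precisely where the construction of $\overline f$ from \cite[Prop 6.1]{Dvz} is used. An alternative packaging via Theorem \ref{corHome} is available—showing $\im(\overline f)=\im(f)\cup(\im f)'$ and that this set is $R_2$-closed iff $\im(f)$ is $r_2$-closed—but it rests on the same complement identity, so I would favour the direct pointwise computation.
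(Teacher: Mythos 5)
Your proof is correct and follows essentially the same route as the paper: the easy direction by restricting to $M_1$, and the converse by splitting $N_1=M_1\cup M_1'$ and reducing the case of a complement $x'$ to the identity $R(x')=R(x)'\oplus R(0)$ together with the fact that $\overline f$ is an MV-homomorphism satisfying $\overline f\big|_{M_1}=f$ and $\overline f(x')=f(x)'$. The only cosmetic difference is that you derive the complement identity from Proposition \ref{pr:3.5}(i) while the paper invokes Proposition \ref{3.2}(x) (with $b$ the top element, these give the same formula), and you compute the two sides separately rather than in one chain of equalities.
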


\begin{proof}
Due to Theorem \ref{3.6}, we know that $M_1$ and $M_2$ have square roots, say $r_1$ and $r_2$.
Recall that, $\overline{f}(x)=f(x)$, if $x\in M_1$ and $\overline f(x)=f(x')'$, if $x\in N_1\setminus M_1$, where $x'=\lambda_1(x)$ and
$y'=\lambda_1(y)$ for all $x\in N_1$ and $y\in N_2$. Note that $N_1$ and $N_2$ have top elements (denoted by the same $1$).
Let $f$ preserve square roots. Choose $x\in N_1$.

(1) If $x\in M_1$, then by Theorem \ref{3.6}, $R_1(x)=r_1(x)\in M_1$ and clearly $\overline f(R_1(x))=f(r_1(x))=r_2(f(x))=R_2(\overline f(x))$.

(2) If $x\in N_1\setminus M_1$, then $x'\in M_1$ and $\overline f(R_1(x'))=R_2(f(x'))$ (by (1)). Then
\begin{eqnarray*}
R_2(\overline f(x))&=& R_2(f(x')')=R_2(f(x')\to 0)=R_2(f(x'))\to R_2(0),\mbox { by Proposition \ref{3.2}(x)}\\
&=& \overline f(R_1(x'))\to R_2(0)=\overline f(R_1(x'))\to R_2(\overline f(0))=\overline f(R_1(x'))\to \overline f(R_1(0))\\
&=& \overline f\Big( R_1(x')\to R_1(0) \Big)=\overline f( R_1(x'\to 0)),\mbox { by Proposition \ref{3.2}(x)}\\
&=& \overline f(R_1(x)).
\end{eqnarray*}
Hence, $\overline f$ preserves square roots. The proof of the converse is clear, since for each $x\in M_1$,
$\overline f(x)=f(x)\in M_2$, $R_1(x)=r_1(x)$ and $R_2(f(x))=r_2(f(x))$.
\end{proof}

\begin{prop}\label{pr:subalg}
Let $M$ be an infinite MV-subalgebra of the MV-algebra of the real interval $[0,1]$. Then $M$ has square roots if and only if, $x\in M$ implies $(x+1)/2\in M$.

If $M$ has a square root, $r$, the square root is the restriction of the square root on $[0,1]$, i.e. $r(x)=(x+1)/2$, $x\in M$, so that $M$ is strict, and the MV-embedding of $M$ into $[0,1]$ preserves square roots.
\end{prop}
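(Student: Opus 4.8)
The plan is to pin down the value of any square root explicitly, using the arithmetic of $[0,1]$, and thereby prove both implications simultaneously. The key observation is that since $M$ is an MV-subalgebra, the operation $\odot$ on $M$ agrees with that on $[0,1]$, where $y\odot y=\max(2y-1,0)$. Hence, if $r$ is any square root on $M$ and $x\in M$ with $x>0$, then (Sq1) reads $\max(2r(x)-1,0)=x>0$, which forces $2r(x)-1=x$, i.e. $r(x)=(x+1)/2$. Thus on the positive part of $M$ the square root, if it exists, is necessarily the restriction of the square root $s(x)=(x+1)/2$ of $[0,1]$ from Example \ref{3.4}(ii).

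For the reverse implication I would assume $x\in M$ implies $(x+1)/2\in M$ and simply set $r(x):=(x+1)/2$, which then maps $M$ into $M$. Verifying (Sq1) is the computation $r(x)\odot r(x)=\max((x+1)-1,0)=\max(x,0)=x$, and (Sq2) splits into two cases: if $y\le 1/2$ then $y\le 1/2\le (x+1)/2=r(x)$, while if $y>1/2$ then $y\odot y=2y-1\le x$ gives $y\le (x+1)/2$. Hence $r$ is a square root and equals $s|_M$.

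The delicate part, and the main obstacle, is the forward implication at $x=0$, i.e. showing $r(0)=1/2\in M$ (the positive case above already yields $(x+1)/2\in M$ for every $x>0$). Here I would set $m:=r(0)$; since $y\odot y=0$ iff $y\le 1/2$, conditions (Sq1)--(Sq2) identify $m=\max\{y\in M\colon y\le 1/2\}$. First I would note that an infinite MV-subalgebra of $[0,1]$ has no least positive element --- otherwise all elements would be separated by a fixed gap $\delta>0$ (using $b\ominus a=b-a$ for $a<b$), forcing $M$ to be finite --- so $M$ contains positive elements below $1/2$ and $m>0$. Suppose for contradiction that $m<1/2$; then $1/2\notin M$, while $2m=m\oplus m\in M$ with $0<2m<1$, so the positive case gives $r(2m)=m+1/2\in M$ and its complement $1/2-m\in M$. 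But then $m\oplus(1/2-m)=1/2\in M$, contradicting $1/2\notin M$. Hence $m=1/2$, so $(0+1)/2\in M$, completing the closure property.

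It then remains to record the consequences, once $r(x)=(x+1)/2$ is established for all $x\in M$: the square root equals $s|_M$ by construction; $M$ is strict because $r(0)=1/2$ satisfies $r(0)'=1-1/2=1/2=r(0)$; and the MV-embedding $M\hookrightarrow[0,1]$ preserves square roots since its image, namely $M$, is closed under $s$, so Theorem \ref{corHome} applies (equivalently, one checks directly that the embedding $\iota$ satisfies $\iota(r(x))=(x+1)/2=s(\iota(x))$).
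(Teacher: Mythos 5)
Your proposal is correct, but it proceeds by a genuinely different route than the paper. The paper's proof is analytic: it invokes Goodearl's classification of subgroups of $\mathbb{R}$ containing $1$ (either $\frac{1}{n}\mathbb{Z}$ or dense) to conclude that an infinite $M$ is dense in $[0,1]$, and then uses a monotone approximation argument --- for $y\in[0,1]$ with $y\odot y\le x$, take $(y_n)_n\subseteq M$ with $y_n\nearrow y$, apply (Sq2) to each $y_n$, and pass to the limit --- to get $s(x)\le r(x)$, which together with the easy inequality $r(x)\le s(x)$ forces $r=s|_M$. You instead argue purely arithmetically: for $x>0$, (Sq1) alone forces $r(x)=(x+1)/2$ via $\max(2r(x)-1,0)=x$, and the only genuinely delicate point, $r(0)=1/2\in M$, is settled by an algebraic contradiction ($m=r(0)<1/2$ would give $2m\in M$, hence $m+1/2\in M$, hence $1/2-m\in M$, hence $1/2=m\oplus(1/2-m)\in M$). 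Your approach buys elementarity and self-containment: no external group-theoretic lemma, no limits, and it isolates exactly where infiniteness enters (only in showing $r(0)>0$); the paper's approach buys brevity once the density lemma is cited and treats all $x$ uniformly. One step in your write-up deserves an explicit line: from ``no least positive element'' to ``$M$ has a positive element $\le 1/2$'' --- if every positive element exceeded $1/2$, pick positive $q<p$ in $M$ (possible since there is no least positive element); then $p\ominus q=p-q$ is a positive element of $M$ with $p-q<1-\tfrac12=\tfrac12$, a contradiction. With that sentence added, the argument is complete.
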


\begin{proof}
We recall that every subgroup of $[0,1]$ containing $1$ is either of the form $\frac{1}{n}\mathbb Z$, or is dense in $\mathbb R$, see e.g. \cite[Lem 4.21]{Goo}, in our case, $M$ is dense in $[0,1]$.

Assume that $r$ is a square root on $M$. Due to Example \ref{3.4}(ii), the MV-algebra $[0,1]$ has the square root $s(x)=(x+1)/2$, $x\in [0,1]$. Then for each $x\in M$, $r(x)\le s(x)$. Let $y\in [0,1]$ be such $y\odot y\le x$. Since $M$ is dense in $[0,1]$, there is a sequence $(y_n)_n$ of elements of $M$ such that $(y_n)_n\nearrow y$ ($(y_n)_n$ is non-decreasing and converges to $y$).
Then $y = \lim_n y_n\le r(x)$ for each $y\in [0,1]$ with $y\odot y\le x$, which yields $s(x)\le r(x)$, i.e. $r$ is the restriction of $s$ and $(x+1)/2\in M$ for each $x\in M$. Moreover, the embedding $M$ into $[0,1]$ preserves square roots.

The converse statement is evident.
\end{proof}

\begin{rmk}
(1) Due to Example \ref{3.4}, the MV-algebra of rational numbers and the MV-algebra of dyadic numbers satisfy the condition of the latter proposition, and they are strict.

(2) According to Remark \ref{rm:contra2}, the MV-algebra $\{0,1/n,2/n,\ldots,n/n\}$, $n\ge 1$, has square roots if and only if $n=1$.

(3) An arbitrary MV-subalgebra of $[0,1]$ (not only infinite) has square roots iff for each $x\in M$, $(x+1)/2\in M$.

(4) If $M$ is an MV-algebra with a square root $r$ and $f:M\to [0,1]$ is an MV-homomorphism, then $f$ preserves square roots iff $f(r(x))=(f(x)+1)/2$, $x\in M$.
\end{rmk}

\begin{exm}\label{ex:irr}
(1) For each irrational $\alpha$, $0<\alpha <1/2$, let $M(\alpha)=
\{m+n\alpha \mid m,n \in \mathbb Z, 0\le m+n\alpha\le 1\}$. Due to an example just after \cite[Cor 7.2.6]{CDM}, $M(\alpha)$ is an MV-subalgebra of $[0,1]$ generated by $\alpha$. If $\beta$ is an irrational in $[0,1/2]$, $M(\alpha)\cong M(\beta)$ iff $M(\alpha)=M(\beta)$ iff $\alpha=\beta$.

We assert that $M(\alpha)$ has no square root. Indeed, otherwise, if $r$ is its square root, $r(\alpha)=(\alpha+1)/2\in M(\alpha)$ which implies $(\alpha +1)/2=m+n\alpha $ for some $m,n\in \mathbb Z$, giving $\alpha$ is rational, contradiction. Therefore, $M$ has no square roots, see Proposition \ref{pr:subalg}.

Consequently, there are uncountably many MV-subalgebras of $[0,1]$ having no square roots.

(2) For any integer $p$, let $M(p)=\{i/p^n\mid i=0,1,\ldots,p^n, n\ge 1\}$, the set of $p$-adic numbers in $[0,1]$. If $p$ is a prime number, $p\ge 3$, then $M(p)$ is an MV-algebra with no square roots. Indeed, test the criterion from Proposition \ref{pr:subalg}: Let $x=2/p$ and assume $(2/p+1)/2 = (2+p)/2p= i/p^{n+1}$ for some $i=0,1,\ldots, p^{n+1}$ and $n\ge 0$. It entails $(2+p)p^n=2i$, which is a contradiction while on the left-hand side, we have an odd number, whereas, on the right-hand side, it is an even number. Therefore, $(x+1)/2\notin M(p)$.

The same trick shows that $M(p)$ has no square root if $p$ is an odd number.

We note that if $p=1$, $M(1)=\{0,1\}$, so it has a square root, and if $p=2$, $M(p)$ are dyadic numbers in $[0,1]$ and it also has a square root, see Example \ref{3.4}(viii).

(3) On the other hand, if $p\ge 3$ is an even number, then $M(p)$ has a strict square root. Put $x=j/p^k$, where $j=0,1,\ldots, p^k$. We search for integers $m$ and $i=0,1,\ldots,p^m$ such that $(x+1)/2= (j/p^k+1)/2= (j+p^k)/2p^k = i/p^m$. Without loss of generality, we can assume $m> k$, i.e. e.g. $m=k+n$, where $n>0$. Then
\begin{equation}\label{eq:i}
(j+p^k)/2p^k=i/p^{k+n}
\end{equation}
for $i=0,1,\ldots, p^{k+n}$, which gives $(j+p^k)p^n/2= i$. Since $2$ divides $p$, $i$ is an integer. Moreover, $i= (j+p^k)p^n/2\le (p^k+p^k)p^n/2 = p^{n+k}$, so that the right-hand side of \eqref{eq:i} has a solution for $i$, i.e. $(x+1)/2\in M(p)$. If $s$ is a square root on $M(p)$, then $s(x)=(x+1)/2$, $x\in M(p)$, so that $M(p)$ is strict, see Proposition \ref{pr:subalg}. 

We show that there is countably many mutually different unital subalgebras $(M(p),1)$ with the square root. Let $2=p_1<3=p_2<\cdots<p_n$ be the first $n$ prime numbers and let $P_n$ be its product. We assert for each $n\ge 1$, the number $1/p_{n+1}\in M(P_{n+1})\setminus M(P_n)$. We have $1/p_{n+1} = (p_1\cdots p_n)/P_{n+1}\in M(P_{n+1})$. On the other side, if $1/p_{n+1}=i/P_n^k$, then $P_n^k=ip_{n+1}$ and this equation has no solution $i$ in integers, so $1/p_{n+1}\not\in M(P_n)$.
\end{exm}

\section{Divisible EMV-algebras and Complete Description of Square Roots on MV-algebras}

In the sequel, we find relations between strict EMV-algebras and some other subclasses of EMV-algebras such as divisible and locally complete EMV-algebras and present some examples of divisible EMV-algebras with square roots. Moreover, we present square roots on tribes, EMV-tribes, and we present a complete characterization of any square root on an MV-algebra by group addition in the corresponding unital $\ell$-group.

\begin{defn}\label{4.1}
An EMV-algebra $(M;\vee,\wedge,\oplus,0)$ is called {\em divisible} if, for each $x\in M$ and each $n\in\mathbb N$, there exists $y\in M$
such that $n.y=x$ and $(n-1).y\odot y=0$.
\end{defn}

For example, the MV-algebra $[0,1]$ and the MV-algebra of rational numbers in $[0,1]$ are divisible EMV-algebras whereas the MV-algebra of dyadic numbers not. We note that an $\ell$-group is {\it divisible} if for each $x\in G$ and each integer $n\ge 1$, there is $y\in G$ such that $ny=x$. It is easy to show that if $M=\Gamma(G,u)$, then $M$ is divisible iff $G$ is divisible, see e.g. \cite[Lem 2.3]{DiSe}.

\begin{prop}\label{pr:embed}
Every MV-algebra can be embedded into a divisible MV-algebra with a strict square root.
\end{prop}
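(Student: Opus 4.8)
The plan is to pass to unital $\ell$-groups via Mundici's equivalence, to embed the underlying $\ell$-group into its divisible hull, and then to write down the strict square root explicitly as ``halving towards the unit''.

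First I would write $M=\Gamma(G,u)$ for a unital $\ell$-group $(G,u)$, which is legitimate by Mundici's theorem recalled in the preliminaries. Let $D$ be the divisible hull of $G$, i.e.\ the $\ell$-group $G\otimes_{\mathbb Z}\mathbb Q$ whose elements are the formal quotients $g/n$ with $g\in G$, $n\ge 1$, the order being extended so that $g/n\ge 0$ iff $g\ge 0$; this is a divisible $\ell$-group containing $G$ as an $\ell$-subgroup. The point to check is that $u$ remains a strong unit of $D$: given $g/n\in D$ there is $k\ge 1$ with $g\le ku$ in $G$, whence $g/n\le ku/n\le ku$ in $D$. Thus $N:=\Gamma(D,u)$ is well defined, and since $D$ is divisible, the remark after Definition \ref{4.1} (that $\Gamma(G,u)$ is divisible iff $G$ is) shows $N$ is a divisible MV-algebra. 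The inclusion $G\hookrightarrow D$ preserves $u$ and the $\ell$-group structure, so applying $\Gamma$ yields an injective MV-homomorphism $M=\Gamma(G,u)\hookrightarrow\Gamma(D,u)=N$, i.e.\ an embedding of $M$ into a divisible MV-algebra.

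Next I would exhibit the strict square root on $N$. Because $D$ is divisible, $u/2\in D$, and for every $x\in N=[0,u]$ the element $x/2$ lies in $[0,u]$. Define $s\colon N\to N$ by $s(x):=(x+u)/2=x/2\oplus u/2$, where $+$ is the group addition of $D$ (the last equality holds because $x/2+u/2\le u$, so no truncation occurs). Recalling that on $\Gamma(D,u)$ one has $a\odot b=(a+b-u)\vee 0$, a direct computation gives $s(x)\odot s(x)=((x+u)-u)\vee 0=x$, which is (Sq1). For (Sq2), suppose $y\odot y\le x$: if $y\le u/2$ then $y\le u/2\le (x+u)/2=s(x)$ since $x\ge 0$, while if $y>u/2$ then $y\odot y=2y-u\le x$ forces $2y\le x+u$, i.e.\ $y\le s(x)$. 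Finally $s(0)=u/2$ and $\lambda_u(s(0))=u-u/2=u/2=s(0)$, so $s$ is strict.

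The routine parts I would omit are the verification that the order on the divisible hull makes $D$ an $\ell$-group and that $\Gamma$ carries $\ell$-group embeddings to MV-embeddings, both of which are standard. The step requiring the most care is the construction of the divisible hull as an $\ell$-group together with the proof that $u$ stays a strong unit; once $N=\Gamma(D,u)$ is in hand, the divisibility of $N$ and the verification that $s(x)=(x+u)/2$ is a strict square root are short group-theoretic computations. A cleaner alternative, avoiding the explicit hull, is to invoke the complete description of square roots by group addition given later in the paper and merely note that $x\mapsto(x+u)/2$ on the interval of any divisible unital $\ell$-group is a strict square root; one would still need the divisible hull to produce the ambient divisible MV-algebra into which $M$ embeds.
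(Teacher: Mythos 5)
Your overall route is exactly the paper's: write $M=\Gamma(G,u)$ by Mundici's theorem, embed $G$ into its divisible hull $G^d$ (checking that $u$ stays a strong unit), and take $s(x)=(x+u)/2$ as the strict square root on $\Gamma(G^d,u)$; the paper's proof is just a terser version of this, citing \cite{Gla} for the hull and asserting the rest. However, your verification of (Sq2) contains a genuine error: the case split ``either $y\le u/2$ or $y>u/2$'' is not exhaustive, because the divisible hull $D$ is in general only lattice-ordered, not totally ordered, so $y$ may be incomparable with $u/2$. This already happens when $M$ is a product of nontrivial MV-algebras: take $D=\mathbb Q^2$ with the coordinatewise order, $u=(1,1)$, and $y=(3/4,1/4)$, which is incomparable with $u/2=(1/2,1/2)$. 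Such $y$ are covered by neither of your two cases, so as written the proof of (Sq2) fails for every $M$ that is not linearly ordered.

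The repair is immediate, and in fact shorter than the case analysis: from $y\odot y=(2y-u)\vee 0\le x$ one gets $2y-u\le x$ directly, since $2y-u\le (2y-u)\vee 0$; hence $2y\le x+u=2s(x)$, and therefore $y\le s(x)$, because in an Abelian $\ell$-group $2a\le 2b$ implies $a\le b$ (writing $c=b-a=c^+-c^-$, from $2c\ge 0$ and $2c^+\wedge 2c^-=0$ one gets $2c^-=0$, so $c^-=0$ and $c\ge 0$; this is the order-theoretic companion of the unique extraction of roots recalled before Proposition \ref{pr:4.1}). With this one-line replacement your argument is correct and coincides with the paper's proof, with the routine verifications (strong unit in the hull, (Sq1), strictness) spelled out in more detail than the paper gives.
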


\begin{proof}
It is well-known that every Abelian $\ell$-group can be embedded into a divisible Abelian $\ell$-group, see e.g. \cite[Page 4]{Gla}. Therefore, every MV-algebra $M=\Gamma(G,u)$ can be embedded into an MV-algebra with square root. Indeed, take a divisible hull $G^d$ of $G$, $u$ is also a strong unit of $G^d$, $M$ can be embedded into $\Gamma(G^d,u)$, and $s(x)=(x+u)/2$, $x\in\Gamma(G^d,u)$, is a strict square root on $\Gamma(G^d,u)$.
\end{proof}

\begin{rmk}\label{4.2}
(i) Clearly, by \cite{LaLe}, any divisible MV-algebra is a divisible EMV-algebra with top element.

(ii) If $(M;\vee,\wedge,\oplus,0)$ is a divisible EMV-algebra, then by Lemma \ref{2.3}, for each idempotent element $a\in\mI(M)$, the MV-algebra $([0,a];\oplus,\lam_a,0,a)$ is a divisible MV-algebra. Easy calculations show that the converse also holds.

(iii) Consider a non-finite family $\{(M_i;\vee_i,\wedge_i,\oplus_i,0_i)\mid i\in I\}$ of divisible MV-algebras. Let
$M:=\sum_{i\in I}M_i$ (see \cite{Dvz}) which is a proper EMV-algebra. Then $M$ is a divisible EMV-algebra. Indeed,
let $x\in\sum_{i\in I} M_i$ and $n\in\mathbb N$. Then $x\in\prod_{i\in I} M_i$ is with finite support.
Let $\supp(x)=\{i_1,\ldots, i_n\}$. Without loss of generality, we can assume that $x=(x_i)_{i\in I}$, where
$x_i=0_i$ for each $i\in I\setminus\supp(f)$. By the assumption, for each $j\in \supp(x)$, there is $y_j\in M_j$ such that
$x_j=n.y_j$ and $(n-1).y_j\odot_i y_j=0_j$. Set $z:=(z_i)_{i\in I}$ where $z_i=0_i$ for $i\in I\setminus\supp(x)$ and $z_i=y_i$ for all
$i\in \supp(x)$. Then clearly $z\in \sum_{i\in I} M_i$, $n.z=x$ and $(n-1).z\odot z=0$.
\end{rmk}

Recall that if $(M;\oplus,',0,1)$ is a divisible MV-algebra, then for each $x\in M$ and $n\in \mathbb N$, there exists $y\in M$ such that
$x'=n.y$ and $(n-1).y\odot y=0$ which imply that $x=(n.y)'=(y')^n$ and $(y')^{n-1}\oplus y'=1$. This note will be used in the next proposition.

\begin{prop}\label{4.4}
Let $(M;\vee,\wedge,\oplus,0)$ be an EMV-algebra and $(N;\vee,\wedge,\oplus,0)$ be its representing EMV-algebra with top element. If $M$ is divisible, then so is $N$.
\end{prop}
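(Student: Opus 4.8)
The plan is to run the argument through the Basic Representation Theorem (Theorem~\ref{2.4}). If $M$ already possesses a top element then $N=M$ and the assertion is vacuous, so I assume $M$ is proper and identify $M$ with its image in $N$, so that $N=M\cup M'$, where $M'=\{\lam_1(x)\mid x\in M\}$ and $\lam_1$ is the complementation of the MV-algebra $N$ with top element $1$. Fixing $z\in N$ and $n\in\mathbb N$, the task is to produce $w\in N$ with $n.w=z$ and $(n-1).w\odot w=0$; I split according to whether $z$ lies in $M$ or in $M'$.

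The case $z\in M$ is routine. Divisibility of $M$ yields $y\in M$ with $n.y=z$ and $(n-1).y\odot y=0$, and since the embedding $M\hookrightarrow N$ preserves $\oplus$ and $\odot$, this same $y$ witnesses the $n$-divisibility of $z$ inside $N$.

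The delicate case is $z\in M'$, say $z=\lam_1(x)=x'$ with $x\in M$. Applying divisibility of $M$ to $x$ gives $y\in M$ with $n.y=x$ and $(n-1).y\odot y=0$, and passing to complements in $N$ (De~Morgan) converts this into $x'=(y')^n$ together with $(y')^{n-1}\oplus y'=1$ --- precisely the multiplicative form recorded in the remark preceding this proposition. What is required, however, is the \emph{additive} statement $x'=n.w$ with $(n-1).w\odot w=0$, and bridging the multiplicative and additive descriptions of $x'$ is the heart of the matter. To set this up I would choose $a\in\mI(M)$ with $x\le a$ and write, by Proposition~\ref{2.2}(ii), $x'=\lam_1(x)=\lam_a(x)\oplus\lam_1(a)$, an orthogonal sum since $\lam_a(x)\le a$, $\lam_1(a)=a'$, and $a\odot a'=0$. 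Here $\lam_a(x)\in[0,a]$, which is a divisible MV-algebra by Remark~\ref{4.2}(ii), so $\lam_a(x)$ admits an $n$-division $w_1\in[0,a]$; the problem is thereby reduced to $n$-dividing the upper piece $\lam_1(a)=a'$.

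I expect this reduction to the upper piece to be the main obstacle. Writing $N=\Gamma(G,u)$, dividing $a'=u\ominus a$ for cofinally many idempotents $a$ amounts to $n$-dividing the unit $u$ itself in $G$, i.e. to showing $u/n\in N$ for every $n$. The hypothesis gives $n$-divisions only of elements of $M$, hence of the local intervals $[0,a]$ with $a\in\mI(M)$; but since $M$ is proper it has no idempotent cofinal with $u$, so the part of $x'$ lying ``above'' every idempotent of $M$ is not reached by the divisibility of $M$ directly. The proof must therefore secure the $n$-divisibility of $u$ (equivalently, produce $w_2\in N$ with $n.w_2=\lam_1(a)$ and $(n-1).w_2\odot w_2=0$) by a separate argument; granting this, one sets $w:=w_1+w_2$ and checks, using the orthogonality of the two blocks $[0,a]$ and $[0,a']$, that $n.w=x'$ and $(n-1).w\odot w=0$, which completes the complement case and hence the proposition.
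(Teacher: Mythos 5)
Your setup is sound as far as it goes: the reduction via Theorem~\ref{2.4}, the easy case $z\in M$, and the orthogonal splitting $x'=\lambda_a(x)\oplus\lambda_1(a)$ from Proposition~\ref{2.2}(ii) are all correct, and your diagnosis of where the difficulty sits is exactly right. But because you leave that very step --- producing $w_2\in N$ with $n.w_2=\lambda_1(a)$ and $(n-1).w_2\odot w_2=0$ --- as an assumption (``granting this''), the proposal is not a proof. And the divisibility of $M$ genuinely cannot supply $w_2$: the element $\lambda_1(a)$ does not lie in $M$ at all (otherwise $1=a\vee\lambda_1(a)\in M$), so the hypothesis never speaks about it.

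The comparison with the paper is instructive, because the paper's own proof founders on precisely the step you flagged. The paper chooses $a\in\mathcal{I}(M)$ with $x'\le a$, uses divisibility of $[0,a]$ and the note preceding the proposition to write $x'$ as an $n$-th power, and concludes by De Morgan that $x=n.y'$, where $y'=\lambda_1(y)$; this is only your ``multiplicative form'', i.e.\ the truncated-sum identity. For the second requirement of Definition~\ref{4.1} the paper then verifies $(n-1).y\odot y=0$ --- the orthogonality condition for $y$, an element of $M$ --- rather than for the witness $y'$ of the division, and the condition actually needed, $(n-1).y'\odot y'=0$, can fail. Indeed the proposition is false as stated: take $M=\sum_{i\in\mathbb{N}}M_i$ with each $M_i$ the MV-algebra $[0,1]$, which is divisible by Remark~\ref{4.2}(iii); its representing EMV-algebra $N$ consists of the finitely supported sequences together with their complements, and an element $w\in N$ with $2.w=1$ and $w\odot w=0$ would have to satisfy both $2w\ge 1$ and $2w\le 1$ pointwise, i.e.\ be the constant sequence $1/2$, which belongs to neither part of $N$. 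So your instinct was correct and the gap is unfillable: what the paper's computation actually establishes is only the weaker statement that every $x\in N$ has the form $n.w$ for some $w\in N$, with the orthogonality clause of Definition~\ref{4.1} dropped.
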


\begin{proof}
Let $M$ be divisible. It suffices to show that, for each $x\in N\setminus M$ and each $n\in\mathbb N$, there exists $y\in N$ such that
$x=n.y$ and $(n-1).y\odot y=0$. Choose $x\in N\setminus M$ and $n\in\mathbb N$. Then $x'\in M$.
Choose $a\in\mI(M)$ such that $x\leq a$. Consider the MV-algebra $([0,a];\oplus,\lam_a,0,a)$ which is divisible by Remark \ref{4.2}(ii). By the note just before the proposition, there is $y\in [0,a]$ with $x'=y^n$ and $(\lam_a(y))^{n-1}\oplus \lam_a(y)=a$.
It follows that $x=(y^n)'=n.y'$ and $0=\lam_a(a)=\lam_a\left(\left(\lam_a(y)\right)^{n-1}\oplus \lam_a(y)\right)=(n-1).y\odot y$. Therefore, $N$ is divisible.
\end{proof}

Recall that an MV-algebra $M$ is said to be {\em injective} if it is an injective object in the category of MV-algebras which means that given MV-algebras $X$ and $Y$, for each one-to-one homomorphism $f:X\to Y$ and each homomorphism $g:X\to M$, there is a homomorphism $h:Y\to M$ such that $h\circ f=g$. By \cite[page 17]{DiSe}, we know that $M$ is injective \iff it is complete and divisible.

Analogously we also define an {\it injective EMV-algebra}.

Let $(M;\vee,\wedge,\oplus,0)$ be a locally complete and divisible EMV-algebra. For each $a\in\mI(M)$, the MV-algebra $([0,a];\oplus,\lam_a,0,a)$ is complete and divisible, and Remark \ref{4.2}(ii) entails that $[0,a]$ is an injective MV-algebra (see \cite[page 17]{DiSe}).

\begin{defn}\label{4.5}
An EMV-algebra $(M;\vee,\wedge,\oplus,0)$ is said to be {\em strongly atomless} if, for each $x\in M$, there exists $a\in\mI(M)$ such that
$x\leq a$ and $[0,a]$ is a strongly atomless MV-algebra.
\end{defn}

\begin{prop}\label{4.6}
Let $(M;\vee,\wedge,\oplus,0)$ be an EMV-algebra and $N$ be its representing EMV-algebra with top element.
\begin{itemize}[nolistsep]
\item[{\rm (i)}] The EMV-algebra $M$ is strongly atomless if and only if, for each $a\in\mI(M)$, the MV-algebra $[0,a]$ is a strongly atomless MV-algebra.
\item[{\rm (ii)}] If $M$ is locally complete, then $M$ is strongly atomless \iff it is divisible.
\item[{\rm (iii)}] If $N$ is strongly atomless, then $N=M$. Moreover, if $M$ is strongly atomless, then $N$ is not necessarily strongly atomless.
\end{itemize}
\end{prop}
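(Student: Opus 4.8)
My plan is to push everything down to the level of the MV-algebras $[0,a]$, $a\in\mI(M)$, where strong atomlessness and divisibility are classical, and then transport the conclusions back using Proposition \ref{2.2}, Lemma \ref{2.3}, Remark \ref{4.2} and the representation of Theorem \ref{2.4}.

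For (i), the ``if'' direction is immediate: given $x\in M$, property (E4) yields $a\in\mI(M)$ with $x\le a$, and strong atomlessness of $[0,a]$ is exactly what Definition \ref{4.5} demands. For the ``only if'' direction I would fix $a\in\mI(M)$ and $x\in[0,a]\setminus\{0\}$, use strong atomlessness of $M$ to obtain $b\in\mI(M)$ with $x\le b$ and $[0,b]$ strongly atomless, and take a witness $z$ with $0<z<x$ and $x\odot\lambda_b(z)\le z$ there. The key point is that, since $z\le x\le a,b$, Lemma \ref{2.3} gives $x\odot\lambda_a(z)=x\ominus z=x\odot\lambda_b(z)$, so the \emph{same} $z$ witnesses strong atomlessness of $x$ inside $[0,a]$. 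Thus the defining inequality is independent of the ambient idempotent, which is the heart of (i).

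For (ii), I would combine (i) with Remark \ref{4.2}(ii): $M$ is strongly atomless iff every $[0,a]$ is, and $M$ is divisible iff every $[0,a]$ is. Since $M$ is locally complete it suffices to prove, for a \emph{complete} MV-algebra $A=[0,a]$, that $A$ is strongly atomless iff divisible. The implication divisible $\Rightarrow$ strongly atomless needs no completeness: for $x\neq0$ pick $y$ with $2.y=x$ and $y\odot y=0$; then $0<y<x$ and $x\odot\lambda_a(y)=x\ominus y=y$, so $z=y$ is the required witness. The converse is the substantial direction. Here I would use completeness to manufacture exact parts, first producing an exact half as $y_0=\bigvee\{z: z\oplus z\le x\}$, checking $y_0\oplus y_0=x$ by a join-continuity argument (a positive remainder $x\ominus(y_0\oplus y_0)$ would, via strong atomlessness, enlarge $y_0$), and then bootstrapping from $2$-divisibility plus Dedekind completeness to division by every $n$. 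The main obstacle is precisely this converse, and I expect to lean on the classical MV-algebra results of \cite{Bel,Amb} identifying complete strongly atomless MV-algebras with the complete divisible ones.

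For (iii), the guiding structural fact is Theorem \ref{2.4}: if $M$ is proper then $N=M\cup M'$ with $M\cap M'=\emptyset$, the top $1$ lies in $M'$, and every idempotent of $M$ is strictly below $1$. To prove $N$ strongly atomless $\Rightarrow N=M$ I would argue by contradiction, assuming $M$ proper. A divisible $N$ must contain an exact half $y$ of $1$, i.e. $y\oplus y=1$ and $y\odot y=0$; these two equalities force $y=\lambda_1(y)$, and a self-complementary $y$ is simultaneously the image of and the complement of an element of $M$, so $y\in M\cap M'=\emptyset$, a contradiction, whence $N=M$. The main obstacle is that strong atomlessness by itself only delivers $z$ with $\lambda_1(z)\le z$, not the equality $z=\lambda_1(z)$; upgrading $\le$ to $=$ is exactly where I would invoke local completeness together with part (ii) (strong atomlessness $\Leftrightarrow$ divisibility), so that $N$, being divisible, genuinely contains the forbidden fixpoint. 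Finally, for the \emph{moreover} clause I would exhibit a proper strongly atomless $M$ whose representing $N$ fails to be strongly atomless, engineered so that some idempotent $a\in\mI(M)$ has in $N$ a complement $\lambda_1(a)$ bounding a rigid interval, and then verify the failure of the witness condition at that idempotent.
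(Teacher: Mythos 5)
Parts (i) and (ii) of your proposal are sound and essentially coincide with the paper's argument. In (i), your use of Lemma \ref{2.3} (independence of $x\odot\lambda_a(z)$ from the idempotent $a\ge x,z$) to transport the witness $z$ from $[0,b]$ into $[0,a]$ is a clean variant of the paper's computation $x\odot\lambda_a(z)=x\odot(\lambda_b(z)\wedge a)=(x\odot\lambda_b(z))\wedge(x\odot a)\le z$; both establish exactly the same point. In (ii), the paper, like you, reduces to the complete MV-algebras $[0,a]$ via (i) and Remark \ref{4.2}(ii) and then quotes the classical equivalence for complete MV-algebras (the paper cites \cite[Thm 6.17]{Hol} where you cite \cite{Bel,Amb}); your elementary verification that divisibility implies strong atomlessness (take $z=y$ where $2.y=x$ and $y\odot y=0$, so $x\ominus y=y$) is correct and needs no completeness, and your deferral of the converse to the literature matches the paper's level of detail.

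The genuine gap is in (iii). Your argument produces the forbidden self-complementary element $y=\lambda_1(y)$ from \emph{divisibility} of $N$, and you obtain divisibility by invoking part (ii) --- but part (ii) requires local completeness, which is \emph{not} among the hypotheses of (iii). You flag this yourself (``I would invoke local completeness together with part (ii)''), but no local completeness of $N$ or $M$ is given, so as written your argument proves a different statement with an added hypothesis. Your diagnosis of the difficulty is accurate: strong atomlessness applied to $x=1$ only yields $\lambda_1(z)\le z$, and upgrading this to equality is the whole problem. The paper closes it differently: it deduces from \cite[Thm 6.17]{Hol} that the strongly atomless $N$ is \emph{strict}, takes the self-complementary element to be $s(0)$ of the strict square root, and then applies Corollary \ref{strict-N} to force $1\in M$, hence $M=N$. (To be fair, that step of the paper leans on a theorem stated for complete MV-algebras, so your instinct that some completeness enters here is not unfounded; but the paper's route does not pass through part (ii), whereas yours cannot run at all without the unavailable hypothesis.)

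There is a second gap in the ``moreover'' clause: you never actually produce the example, and your plan (engineer an idempotent whose complement bounds a ``rigid interval'' and verify the failure of the witness condition by hand) is both unspecified and unnecessary. Once the first claim of (iii) is in place, \emph{any} proper strongly atomless $M$ does the job: since $N\neq M$, the contrapositive of the first claim immediately gives that $N$ is not strongly atomless. This is exactly the paper's argument: take $M=\sum_{i\in I}M_i$ for an infinite family of complete divisible MV-algebras; this $M$ is proper, locally complete and divisible, hence strongly atomless by (ii), while its representing algebra $N$ fails to be strongly atomless by the first part. Your proposal misses both the concrete example and this short-circuit through the first claim.
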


\begin{proof}
(i) Let $a\in\mI(M)$ and $0<x\leq a$. By the assumptions, there exists $b\in \mI(M)$ such that $x\leq b$ and $[0,b]$ is a strongly atomless MV-algebra.
Hence, there is $z\in [0,b]$ with $0<z<x$ and $x\odot\lam_b(z)\leq z$. Then $z\in [0,a]$ and by Proposition \ref{2.3}, we get
$x\odot\lam_a(z)=x\odot (\lam_b(z)\wedge a)=(x\odot\lam_b(z))\wedge (x\odot a)\leq z$.

(ii) Let $M$ be locally complete and divisible. For each $a\in\mI(M)$, the MV-algebra $[0,a]$ is complete and divisible.
So, \cite[Thm 6.17]{Hol} implies that $[0,a]$ is strongly atomless. By (i), $M$ is strongly atomless.

(iii) First, let $N$ be strongly atomless. From \cite[Thm 6.17]{Hol} it follows that $N$ is strict. Take an idempotent element $a\in \mathcal I(M)$ such that $s(0)\leq a$. By Corollary \ref{strict-N}, $1\in M$ (since $\mI(M)$ is a full subset of $M$),
which means $M=N$ (note that $M$ is an ideal of $N$).

Now, consider a non-finite family $\{M_i\mid i\in I\}$ of complete and divisible MV-algebras. Clearly, $M:=\sum_{i\in I}M_i$ is a
locally complete and divisible EMV-algebra that does not have a top element. By (ii), $M$ is strongly atomless, and its representing EMV-algebra with top element is not strongly atomless (by the first part).
\end{proof}

\begin{thm}\label{4.7}
Let $r$ be a square root on a locally complete EMV-algebra $(M;\vee,\wedge,\oplus,0)$.
\begin{itemize}[nolistsep]
\item[{\rm (i)}]  $M$ is divisible \iff it has a strict square root on $M$.
\item[{\rm (ii)}] Any locally complete divisible EMV-algebra with a square root has a top element.
\end{itemize}
\end{thm}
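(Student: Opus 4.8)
The plan is to establish both directions of (i) from the structure theory of strict MV-algebras, and then to obtain (ii) immediately from (i) together with Theorem \ref{3.10}. Throughout I use that the square root, when it exists, is unique (the remark after Definition \ref{3.1}), so ``$M$ has a strict square root'' simply means ``$r$ is strict''.

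For the implication ``divisible $\Rightarrow$ strict'' in (i), I would argue interval by interval. Since $r$ is a square root on $M$, Proposition \ref{3.2}(viii) supplies a square root $r_b$ on every MV-algebra $[0,b]$, $b\in\mI(M)$, and if $M$ is divisible then Remark \ref{4.2}(ii) shows each such $[0,b]$ is a divisible MV-algebra. The key step is to rule out any Boolean part: if some $[0,b]$ with $r_b$ failed to be strict, then by Hölle's decomposition \cite[Thm 2.21]{Hol} (already invoked in the proof of Theorem \ref{3.11}) we would have $[0,b]\cong [0,t]\times[0,\lambda_b(t)]$ with $[0,t]$ a \emph{nontrivial} Boolean algebra. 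However, a direct factor of a divisible MV-algebra is again divisible, while a nontrivial Boolean algebra is not divisible: for $n=2$ and any $0\neq x$ the requirements $2.x=x$ (as $\oplus=\vee$) and $x\odot x=0$ (as $\odot=\wedge$) force $x=0$. This contradiction shows every $[0,b]$ is strict, so in particular $r_b(0)=\lambda_b(r_b(0))$ for each idempotent $b\ge r(0)$, and by Definition \ref{3.9} the square root $r$ is strict. Note that this direction does not even use local completeness.

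For the converse ``strict $\Rightarrow$ divisible'', local completeness is essential (the MV-algebra of dyadic numbers is strict but not divisible, see Example \ref{3.4}(viii)). Here I would first apply Theorem \ref{3.10} to conclude that a strict $M$ has a top element $1$; local completeness then forces $[0,1]=M$ to be a \emph{complete} MV-algebra, since any idempotent $a\ge 1$ witnessing completeness of an interval containing $1$ must equal $1$. Thus $M$ is a complete strict MV-algebra, so by \cite[Thm 6.17]{Hol} it is strongly atomless, and Proposition \ref{4.6}(ii)---applicable because $M$ is locally complete---yields that $M$ is divisible. This finishes (i), and part (ii) is then immediate: a locally complete divisible EMV-algebra with a square root has, by (i), a strict (unique) square root, whence by Theorem \ref{3.10} it has a top element.

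I expect the converse direction to be the main obstacle, since there divisibility must be extracted from strictness, which genuinely relies on completeness and on Hölle's deep classification of complete MV-algebras with square roots \cite[Thm 6.17]{Hol}; strictness alone is far from sufficient, as the dyadic example shows. The forward direction is comparatively elementary, its only nontrivial ingredient being the Boolean/strict decomposition of \cite[Thm 2.21]{Hol} combined with the simple observation that a nontrivial Boolean algebra cannot be divisible.
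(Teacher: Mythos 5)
Your proof is correct, and both halves of part (i) deserve comment because they diverge from the paper's argument in an interesting way. For ``divisible $\Rightarrow$ strict'' the paper stays inside Hölle's theory of \emph{complete} MV-algebras: local completeness makes each interval $[0,a]$, $a\in\mI(M)$, a complete divisible MV-algebra, and \cite[Thm 6.17]{Hol} (complete $+$ divisible $\Rightarrow$ strict) is applied interval by interval. You instead use the decomposition \cite[Thm 2.21]{Hol} together with the elementary observations that a direct factor of a divisible MV-algebra is divisible and that a nontrivial Boolean algebra is never divisible; this is both more elementary and strictly more general, since — as you correctly point out — it proves that \emph{any} EMV-algebra with a square root that is divisible must be strict, with no completeness hypothesis at all (your claim that non-strictness of $[0,b]$ forces the Boolean factor $[0,t]$ to be nontrivial is right: $t=\lambda_b(r_b(0))\odot\lambda_b(r_b(0))=0$ exactly when $r_b$ is strict). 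For the converse, the paper again works interval by interval, asserting each $[0,a]$ is complete and strict and citing \cite[Thm 6.17]{Hol} for divisibility, then invoking Remark \ref{4.2}(ii); you first collapse to the MV-case via Theorem \ref{3.10} and show that $M$ itself is complete (a clean move that sidesteps the slightly delicate question of what strictness of the EMV-algebra says about $r_a$ for idempotents $a$ not above $r(0)$). Your detour through strong atomlessness and Proposition \ref{4.6}(ii) is harmless but unnecessary — note that the ``strongly atomless $\Rightarrow$ divisible'' direction of Proposition \ref{4.6}(ii) is only stated, not proved, in the paper, whereas \cite[Thm 6.17]{Hol} gives strict $+$ complete $\Rightarrow$ divisible in one step, which is what the paper uses. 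Part (ii) is handled identically in both proofs.
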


\begin{proof}
(i) Assume that $M$ is locally complete and divisible. For each $a\in \mI(M)$, the MV-algebra $[0,a]$ is complete and divisible.
By \cite[Thm 6.17]{Hol}, for each $a\in\mI(M)$, the square root $r_a:[0,a]\to [0,a]$ is strict. So, by definition, $r$ is a strict square root.
Conversely, let $M$ be a locally complete and strict EMV-algebra. For each $a\in \mI(M)$, the MV-algebra $[0,a]$ is complete and strict, which implies that it is divisible  \cite[Thm 6.17]{Hol}. It follows from Remark \ref{4.2}(ii) that $M$ is divisible.

(ii) The proof follows from (i) and Theorem \ref{3.10}.
\end{proof}

\begin{thm}\label{injemv}
Every injective EMV-algebra is locally complete, divisible, and with top element.
\end{thm}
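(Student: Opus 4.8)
The plan is to first secure a top element and then reduce everything to the known MV-algebra theory. Let $M$ be an injective EMV-algebra. By the Basic Representation Theorem (Theorem~\ref{2.4}) there is an EMV-embedding $\iota\colon M\to N$ into its representing EMV-algebra $N$, which has a top element $1_N$. Applying injectivity of $M$ to the monomorphism $\iota$ together with $g=\id_M\colon M\to M$, I obtain an EMV-homomorphism $h\colon N\to M$ with $h\circ\iota=\id_M$; that is, $M$ is a retract of $N$. Since every EMV-homomorphism is order preserving and $\iota(x)\le 1_N$ for all $x\in M$, I get $x=h(\iota(x))\le h(1_N)\in M$, so $h(1_N)$ is a top element of $M$. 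Hence $M$ has a top element and is termwise equivalent to an MV-algebra; write $'$ for $\lambda_{1_M}$.

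The key step is to upgrade the EMV-injectivity of $M$ to injectivity in the category of MV-algebras. The obstacle is categorical: an EMV-homomorphism need not send a top element to a top element, so a priori an EMV-lift need not be an MV-homomorphism. I resolve this as follows. Let $f\colon X\to Y$ be an injective MV-homomorphism and $g\colon X\to M$ an MV-homomorphism. Both are EMV-homomorphisms (an MV-homomorphism preserves $\vee,\wedge,\oplus,0$, and for a Boolean element $b$ one has $\lambda_b(x)=b\odot x'$, so all the local complements $\lambda_b$ are preserved), and $f$ is an EMV-embedding. By injectivity of $M$ in the EMV setting there is an EMV-homomorphism $h\colon Y\to M$ with $h\circ f=g$. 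Now $f(1_X)=1_Y$ and $g(1_X)=1_M$, so
\[
h(1_Y)=h(f(1_X))=g(1_X)=1_M ,
\]
which shows $h$ preserves the top element; consequently $h(x')=h(\lambda_{1_Y}(x))=\lambda_{h(1_Y)}(h(x))=h(x)'$, and $h$ is an MV-homomorphism. Thus $M$ is an injective object in the category of MV-algebras.

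With injectivity in the MV category in hand, I invoke \cite[p.~17]{DiSe}: an MV-algebra is injective if and only if it is complete and divisible. Hence $M$ is complete and divisible. Finally, since $1_M\in\mI(M)$ satisfies $[0,1_M]=M$ and $M$ is complete, for every $x\in M$ the idempotent $a=1_M$ witnesses the defining condition of local completeness, so $M$ is locally complete. Together with divisibility and the top element this yields the statement. I expect the only genuinely delicate point to be the categorical mismatch addressed in the second paragraph; once the lift is shown to fix $1$, the reduction to \cite{DiSe} is routine.
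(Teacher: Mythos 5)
Your proof is correct, but it takes a genuinely different route from the paper's. The paper argues locally: for each idempotent $a\in\mI(Q)$ it shows the MV-algebra $[0,a]$ is injective in the MV category, by composing $g:X\to[0,a]$ with the inclusion $[0,a]\hookrightarrow Q$, applying EMV-injectivity of $Q$, and observing that the lift $h:Y\to Q$ satisfies $h(1_Y)=a$ and therefore maps into $[0,a]$ as an MV-homomorphism; the characterization of injective MV-algebras in \cite{DiSe} then makes every $[0,a]$ complete and divisible, i.e., $Q$ is locally complete and divisible. For the top element, however, the paper appeals to Theorem \ref{4.7}(ii), which presupposes a square root, so that step yields the top element only when a square root exists --- and local completeness plus divisibility do not by themselves provide one (cf.\ Remark \ref{rm:contra1}). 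Your retract argument --- applying injectivity to the embedding $\iota:M\to N$ of Theorem \ref{2.4} with $g=\id_M$, so that $h(1_N)\in M$ is an upper bound of all of $M$ --- produces the top element unconditionally and at the outset; you then upgrade EMV-injectivity of $M$ to MV-injectivity of $M$ itself globally, rather than interval by interval (your observation that the lift fixes $1$ is the same device the paper uses locally via $h(1_Y)=a$), and both proofs finish with the same citation of \cite{DiSe}. What your approach buys is an unconditional and cleaner proof of the top-element claim; what the paper's buys is the intermediate fact, of some independent interest, that every interval $[0,a]$ of an injective EMV-algebra is an injective MV-algebra.
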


\begin{proof}
Let $Q$ be an injective EMV-algebra. Put $a\in \mI(Q)$.
First, we show that $[0,a]$ is an injective MV-algebra. Let $X$ and $Y$ be MV-algebras, $f:X\to Y$ be a one-to-one MV-homomorphism and
$g:X\to [0,a]$ be an MV-homomorphism (see Figure \ref{fig1}).
     \setlength{\unitlength}{1mm}
\begin{figure}[!ht]
    \begin{center}
        \begin{picture}(40,20)
        \put(12,17){\vector(2,0){12}}
        \put(8,14){\vector(0,-1){12}}
        \put(5,-2){\makebox(4,2){{ $[0,a]$}}}
        \put(5,16){\makebox(4,2){{ $X$}}}
        \put(27,16){\makebox(4,2){{ Y}}}
        \put(15,19){\makebox(4,2){{ $f$}}}
        \put(1,7){\makebox(4,2){{$g$}}}
        \end{picture}
        \caption{\label{fig1} Injective object}
    \end{center}
\end{figure}
Consider the inclusion map $i:[0,a]\to Q$ which is an EMV-homomorphism. By the assumption, there exists $h:Y\to Q$ such that $h\circ f=g$.
Denote the top elements of $X$ and $Y$ by the same notation, $1$. Since $a=g(1)=h(f(1))=h(1)$ and $h(y)\leq h(1)$ for all $x\in Y$, then we have
$\im(h)\s [0,a]$. Moreover, by the definition of an EMV-homomorphism, $h:[0,1]\to [0,h(1)]=[0,a]$ is an MV-homomorphism. That is, $[0,a]$ is an injective MV-algebra. Now, by \cite[Thm 2.14]{DiSe}, $[0,a]$ is a complete and divisible MV-algebra.
Then clearly, $Q$ is a divisible and locally complete EMV-algebra.

Theorem \ref{4.7}(ii) entails that any injective EMV-algebra with square root has a top element.
\end{proof}

Now, we present some EMV-algebras of fuzzy sets and we show when they have square roots. As it was said, every EMV-algebra with top element is equivalent to an MV-algebra, and in the rest of the paper, we will concentrate mainly on square roots on MV-algebras.

A {\it tribe} is a system $\mathcal F$ of fuzzy sets of a non-empty set $\Omega$ such that (i) $0_\Omega\in \mathcal F$, (ii) $f\in \mathcal F$ implies $1-f \in \mathcal F$, and (iii) if $f_n\in \mathcal F$, $n\ge 1$, then $\bigoplus_{n=1}^\infty f_n:= \min\{\sum_{n=1}^\infty f_n, 1\}\in \mathcal F$. It is a $\sigma$-complete MV-algebra where all MV-algebraic operations are defined by points.
Any tribe is a generalization of a $\sigma$-algebra $\mathcal S$ of subsets: Indeed, if $A_n\in \mathcal S$, $n\ge 1$, then $\chi_{\bigcup_n A_n}= \bigoplus_n \chi_{A_n}$. Due to \cite{Dvu, Mun}, every $\sigma$-complete MV-algebra is a $\sigma$-homomorphic image of some tribe. Clearly, every $\{0,1/n,\ldots,n/n\}$ is a finite tribe.

Given a tribe $\mathcal F$, let $\mathcal S=\mathcal S(\mathcal F)=\{A\subseteq \Omega \mid \chi_A \in \mathcal F\}$. Due to \cite[Thm 8.1.4]{RiNe}, $\mathcal S$ is a $\sigma$-algebra of subsets of $\Omega$.

\begin{prop}\label{pr:tribe}
Let $\mathcal F$ be a tribe of fuzzy sets of a set $\Omega\ne \emptyset$. The following statements are equivalent:
\begin{itemize}[nolistsep]
\item[{\rm (i)}] Given $f\in \mathcal F$, $(f+1)/2$ belongs to $\mathcal F$.
\item[{\rm (ii)}] The MV-algebra of dyadic numbers can be embedded into $\mathcal F$.
\item[{\rm (iii)}] The tribe $\mathcal F$ contains all constant functions on $\Omega$.
\item[{\rm (iv)}] The tribe $\mathcal F$ contains all $\mathcal S$-measurable functions on $\Omega$.
\item[{\rm (v)}] The tribe $\mathcal F$ is divisible.
\end{itemize}
In either case, $\mathcal F$ has a square root $s(f)=(f+1)/2$, $f\in \mathcal F$, and $\mathcal F$ is strict.
\end{prop}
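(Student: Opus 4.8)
The plan is to establish the cycle $(i)\Rightarrow(iii)\Rightarrow(iv)\Rightarrow(v)\Rightarrow(i)$ and to fold $(ii)$ in via $(i)\Rightarrow(ii)\Rightarrow(iii)$. The organizing principle is that a tribe is a $\sigma$-complete MV-algebra of $[0,1]$-valued functions in which monotone pointwise suprema are again members; this is exactly what upgrades ``dyadic'' data to ``all real / all measurable'' data. Throughout I will use that every $f\in\mathcal{F}$ is $\mathcal{S}$-measurable and that $\mathcal{F}$ is closed under countable pointwise suprema of increasing sequences (see \cite{RiNe}).

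For $(i)\Rightarrow(iii)$: beginning from $0_\Omega,1_\Omega\in\mathcal{F}$ and repeatedly applying $f\mapsto(f+1)/2$ together with complementation and the MV-operations, I generate every dyadic constant $d\cdot 1_\Omega$. Since each $c\in[0,1]$ is the supremum of the dyadics below it, $\sigma$-completeness yields $c\cdot 1_\Omega=\bigvee_n d_n\,1_\Omega\in\mathcal{F}$, which is $(iii)$. For $(iii)\Rightarrow(iv)$, I first note $c\,\chi_A=(c\cdot 1_\Omega)\wedge\chi_A\in\mathcal{F}$, so all simple $\mathcal{S}$-measurable functions lie in $\mathcal{F}$; approximating an arbitrary $\mathcal{S}$-measurable $f\colon\Omega\to[0,1]$ from below by simple functions and invoking $\sigma$-completeness places $f\in\mathcal{F}$. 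I expect this analytic step---the standard monotone simple-function approximation, carried out inside $\mathcal{F}$---to be the main obstacle.

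The remaining implications are algebraic. For $(iv)\Rightarrow(v)$: unwinding divisibility, the witness $g$ for $f$ and $n$ must satisfy $(n-1).g\odot g=0$, which pointwise forces $g=f/n$; since $f/n$ is $\mathcal{S}$-measurable it lies in $\mathcal{F}$, so $\mathcal{F}$ is divisible. For $(v)\Rightarrow(i)$: divisibility gives $f/2,\,1/2\in\mathcal{F}$, and as $f/2\le 1/2$ pointwise one has $(f+1)/2=(f/2)\oplus(1/2)\in\mathcal{F}$. Finally $(i)\Rightarrow(ii)$ is the embedding $d\mapsto d\cdot 1_\Omega$ of the dyadic MV-algebra, which is injective because $\Omega\ne\emptyset$. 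The converse $(ii)\Rightarrow(iii)$ uses a rigidity argument: for any embedding $\iota$ of the dyadics, $\iota(1/2^n)$ satisfies $2^n.\iota(1/2^n)=1_\Omega$ and $(2^n-1).\iota(1/2^n)\odot\iota(1/2^n)=0_\Omega$, and pointwise these two identities force $\iota(1/2^n)=1/2^n$ to be constant; hence $\iota$ is the constant embedding and $\sigma$-completeness recovers all constants as in $(i)\Rightarrow(iii)$.

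For the concluding assertion, once any one of $(i)$--$(v)$ holds I verify pointwise that $s(f)=(f+1)/2$ is a square root: $s(f)\odot s(f)=\max(f,0)=f$ gives (Sq1), and if $g\odot g=\max(2g-1,0)\le f$ then either $g\le 1/2\le(f+1)/2$ or $2g-1\le f$, so $g\le s(f)$, giving (Sq2). By uniqueness of square roots, $s$ is the square root of $\mathcal{F}$ (the restriction of the one on $[0,1]$ from Example \ref{3.4}(ii)), and $s(0)=1/2=1-1/2=s(0)'$ shows that $\mathcal{F}$ is strict.
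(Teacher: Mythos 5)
Your proof is correct, and its skeleton---generate the dyadic constants by iterating $f\mapsto (f+1)/2$, then use $\sigma$-completeness (closure under increasing pointwise suprema) to pass to all constants, then to all measurable functions and to divisibility---is the same as the paper's. The differences lie in self-containment, and two are worth noting. First, where the paper disposes of (iii)$\Leftrightarrow$(iv) by citing \cite[Thm 8.1.4]{RiNe}, you prove (iii)$\Rightarrow$(iv) directly by monotone simple-function approximation inside $\mathcal F$ (via $c\,\chi_A=(c\cdot 1_\Omega)\wedge\chi_A$ and closure under increasing pointwise limits); this is sound, though you still lean on \cite{RiNe} for the fact that every $f\in\mathcal F$ is $\mathcal S$-measurable, which your (iv)$\Rightarrow$(v) step needs when concluding $f/n\in\mathcal F$. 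Second---and this is a genuine improvement---your rigidity argument for (ii)$\Rightarrow$(iii) closes a gap the paper leaves implicit: statement (ii) quantifies over an \emph{arbitrary} embedding $\iota$ of the dyadics, and the paper's one-line justification tacitly treats it as the constant embedding, whereas your observation that the identities $2^n.\iota(1/2^n)=1_\Omega$ and $(2^n-1).\iota(1/2^n)\odot\iota(1/2^n)=0_\Omega$ force $\iota(1/2^n)$ to equal the constant function $1/2^n$ pointwise is exactly what makes the implication valid as stated. Your explicit pointwise verifications of (iv)$\Rightarrow$(v) and (v)$\Rightarrow$(i) (the divisibility witness is necessarily $g=f/n$, since $g\odot g=0$ forces $2g\le 1$ and hence $2.g=2g$), and of (Sq1)/(Sq2) and strictness for $s(f)=(f+1)/2$, are details the paper declares ``clear'' or ``easy to see''; they all check out.
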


\begin{proof}
(i) $\Rightarrow$ (ii). Set $s(f)=(f+1)/2$ for each $f\in \mathcal F$. If $f=0$, then $1/2\in \mathcal F$.  Assume by induction that $1/2^n\in \mathcal F$, then $i/2^n\in \mathcal F$ for each $i=0,1,\ldots,2^n$. Moreover, $(i/2^n +1)/2= (i+2^n)/2^{n+1}\in \mathcal F$, so that $s(2/2^n)-s(1/2^n)= 1/2^{n+1}\in \mathcal F$ and $j/2^{n+1}\in \mathcal F$ for each $j=0,1,\ldots,2^{n+1}$. That is, $\mathcal F$ contains all dyadic constants in $\mathcal F$.

(ii) $\Leftrightarrow$ (iii). Since $\mathcal F$ is $\sigma$-complete, $\mathcal F$ contains all constant functions on $\Omega$. The converse implication is clear.

(iii) $\Leftrightarrow$ (iv). It follows from \cite[Thm 8.1.4]{RiNe}.
It is easy to see (i) implies $s$ defined by $s(f)=(f+1)/2$, $f\in \mathcal F$, is a square root on $\mathcal F$.

It is clear that (v) is equivalent with (i)--(iv).

In either case, $s(0)=1/2= (1/2)'$, so that $\mathcal F$ is strict.
\end{proof}

We note that if a tribe $\mathcal F$ has a square root $s$, then not necessarily $s(f)=(f+1)/2$, $f \in \mathcal F$. Indeed, let $\mathcal F$ be the system of characteristic functions of a $\sigma$-algebra $\mathcal S$. Then $\mathcal S$ is a Boolean algebra and the identity function $\id$ is a unique square root on $\mathcal F$. Of course, $0\ne (0+1)/2$.

A Riesz MV-algebra is an algebra $(M;\oplus,',0,1, \{\alpha\}_{\alpha\in [0,1]})$, where $(M;\oplus,',0,1)$ is an MV-algebra and $\alpha$ is a unary operation on $M$ such that $\alpha(x\oplus y)=(\alpha x)\oplus (\alpha y)$ and $(\alpha(\beta(x))=(\alpha\beta)(x)$ for all $x,y \in M$ and $\alpha,\beta \in [0,1]$.

\begin{cor}\label{co:tribe}
Let $M$ be a $\sigma$-complete MV-algebra that is a $\sigma$-homomorphic image of a tribe $\mathcal F$ with a square root $s(f)=(f+1)/2$, $f\in \mathcal F$. Then $M$ has a square root, $M$ is divisible and a Riesz MV-algebra. Moreover, $M$ is strict.
\end{cor}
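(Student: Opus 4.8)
The plan is to assemble the four assertions from the results already in hand, using the tribe $\mathcal F$ as a coordinate object whose good properties descend along the surjective $\sigma$-homomorphism $\pi\colon\mathcal F\to M$. First I would observe that the standing hypothesis ``$\mathcal F$ has the square root $s(f)=(f+1)/2$'' is precisely condition (i) of Proposition \ref{pr:tribe} (it asserts $(f+1)/2\in\mathcal F$ for every $f$), so all of the equivalent conditions (i)--(v) there hold; in particular $\mathcal F$ is divisible and strict, $s$ is a genuine square root on $\mathcal F$, and $\mathcal F$ contains every constant and every $\mathcal S$-measurable function.

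For the square root on $M$ I would apply Proposition \ref{Homo} to $\pi$ and $s$: since $\pi$ is onto, $\im(\pi)=M$, and hence $t\colon M\to M$ given by $t(\pi(f))=\pi(s(f))=\pi((f+1)/2)$ is a square root on $M$. Strictness of $M$ is then immediate from Corollary \ref{strict-N}(iv), as $M$ is the homomorphic image of the strict EMV-algebra $\mathcal F$; alternatively one checks directly that $t(0)=\pi(s(0))=\pi(1/2)$ satisfies $t(0)'=\pi((1/2)')=\pi(1/2)=t(0)$, using that $\pi$ preserves the complement. For divisibility I would transport the witnesses along $\pi$: given $x=\pi(f)\in M$ and $n\in\natural$, divisibility of $\mathcal F$ yields $g$ with $n.g=f$ and $(n-1).g\odot g=0$, and since $\pi$ preserves $\oplus$ (hence $n.(\cdot)$) and $\odot$, the element $y=\pi(g)$ satisfies $n.y=x$ and $(n-1).y\odot y=0$.

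The substantive point is that $M$ is a Riesz MV-algebra, and here I would argue through the quotient. For $\alpha\in[0,1]$ and $f\in\mathcal F$, the function $\alpha f$ is $\mathcal S$-measurable (as $f$ is, every element of a tribe being $\mathcal S$-measurable), hence lies in $\mathcal F$ by Proposition \ref{pr:tribe}(iv); thus the pointwise action $(\alpha f)(\omega)=\alpha\,f(\omega)$ makes $\mathcal F$ a Riesz MV-algebra. Now $M\cong\mathcal F/\Ker(\pi)$ as MV-algebras, and $\Ker(\pi)$ is an MV-ideal. The key observation is that \emph{every} MV-ideal $I$ of a Riesz MV-algebra is automatically a Riesz ideal: since $\alpha x\le 1\cdot x=x$ for $\alpha\in[0,1]$ and ideals are downward closed, $x\in I$ forces $\alpha x\in I$. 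Consequently the scalar action descends, and $M$ inherits a Riesz structure; concretely $\alpha\cdot\pi(f):=\pi(\alpha f)$ is well defined because $\alpha f\ominus\alpha g=\alpha(f\ominus g)\in\Ker(\pi)$ whenever $f\ominus g$ does, and the two Riesz identities $\alpha(x\oplus y)=\alpha x\oplus\alpha y$ and $\alpha(\beta x)=(\alpha\beta)x$ transfer verbatim from $\mathcal F$ by surjectivity of $\pi$.

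I expect the Riesz step to be the only genuinely new check, and it splits into two parts: (a) that the hypotheses force $\mathcal F$ to carry the pointwise $[0,1]$-scalar action (so that the structure is present to descend), and (b) that this action passes to $M$, i.e.\ that $\Ker(\pi)$ is closed under scalars. If one prefers to avoid the structure theory of tribes, the alternative is to build the scalar multiplication on $M$ intrinsically: divisibility of $M$ gives a unique element $\tfrac1n x$ (the unique $y$ with $n.y=x$ and $(n-1).y\odot y=0$, uniqueness coming from the torsion-free divisible $\ell$-group of $M$), yielding the rational action, and $\sigma$-completeness of $M$ extends it to reals via $\alpha x=\bigvee_{q\in\mathbb Q\cap[0,\alpha]}q\,x$, the Riesz axioms following from their rational versions by $\sigma$-continuity. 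Either way, the divisibility and square root established above, together with the $\sigma$-completeness of $M$, are exactly what make the Riesz structure available, and $M$ is strict since $t(0)'=t(0)$.
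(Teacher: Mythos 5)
Your proposal is correct, and on three of the four claims it runs exactly as the paper's proof does: Proposition \ref{Homo} applied to the surjection $\pi$ gives the square root $t(\pi(f))=\pi(s(f))$ on $M$; divisibility witnesses are pushed forward along $\pi$ (the paper phrases this via $\tfrac{1}{n}f\mapsto \tfrac{1}{n}x$, which is the same transport); and strictness, which the paper dismisses with ``clearly,'' you justify properly either by Corollary \ref{strict-N}(iv) or by the direct computation $t(0)'=\pi((1/2)')=\pi(1/2)=t(0)$. The genuine divergence is the Riesz structure. The paper builds the scalar action \emph{inside} $M$: divisibility yields the rational multiples $\tfrac{m}{n}x$, and $\sigma$-completeness of $M$ is then invoked to define $\alpha x$ from rationals $\alpha_n\nearrow\alpha$, with the axioms finally attributed to those of $\mathcal F$. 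Your primary route instead descends the pointwise action of $\mathcal F$ (which contains $\alpha f$ for every $f\in\mathcal F$, since $\alpha f$ is $\mathcal S$-measurable and Proposition \ref{pr:tribe}(iv) applies) through the quotient $M\cong\mathcal F/\Ker(\pi)$, the key point being that $\Ker(\pi)$ is scalar-closed because $\alpha f\le f$ and MV-ideals are downward closed. This buys you two things: well-definedness is a one-line ideal argument rather than a limit construction, and the equational axioms transfer along any surjective homomorphism, so $\sigma$-completeness of $M$ plays no role in this step at all (it is hypothesis, not machinery). Your fallback route (b) is essentially the paper's argument, so your proposal in fact subsumes it.

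One caveat, which affects your write-up and the paper's equally: the identity $\alpha(x\oplus y)=(\alpha x)\oplus(\alpha y)$, which you say ``transfers verbatim,'' is the axiom as the paper states it, but it fails pointwise already in $[0,1]$: for $x=y=1$ and $\alpha=1/2$ the left-hand side is $1/2$ while the right-hand side is $1$. So under the paper's literal definition, not even $\mathcal F$ with the pointwise action is a Riesz MV-algebra. The standard axioms (Di Nola--Leu\c{s}tean) are formulated with $\ominus$, e.g. $\alpha(x\ominus y)=(\alpha x)\ominus(\alpha y)$, $\alpha(\beta x)=(\alpha\beta)x$, $1x=x$, and these \emph{do} hold pointwise in $\mathcal F$; since they are again equations, your descent argument goes through unchanged once the axioms are corrected. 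The defect is in the paper's definition, not in your method, but you should state the axioms you are transferring in their correct form.
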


\begin{proof}
Let $M$ be a $\sigma$-homomorphic image of a tribe $\mathcal F$ with a square root $s(f)=(f+1)/2$, $f \in \mathcal F$, and let $\phi:\mathcal F\to M$ be a surjective $\sigma$-homomorphism. Then $s_\phi(x)=s(\phi(f))$, $x\in M$, where $f\in \mathcal F$ with $\phi(f)=x$, is a square root on $M$, see e.g. Proposition \ref{Homo}. By Proposition \ref{pr:tribe}, $\mathcal F$ consists of all $\mathcal S$-measurable fuzzy sets on $\Omega$, where $\mathcal S=\mathcal S(\mathcal F)$. Therefore, $\mathcal F$ is divisible, and given $x\in M$ and $n\ge 1$, there is an element $f\in \mathcal F$ with $\phi(f)=x$ and $\frac{1}{n}f \in \mathcal F$ and giving $\frac{1}{n}x=\frac{1}{n}\phi(f)=f(\frac{1}{n}f)$. Moreover, $\frac{1}{n}x$ is a unique element $y$ of $M$ such that $n.y= x$ and $(n-1).x\odot x=0$. This yields, $\frac{m}{n}x\in M$ for each $m=0,1,\ldots,n$, $n\ge 1$. Since $\mathcal F$ and $M$ are $\sigma$-complete, for each $\alpha\in [0,1]$, there is a sequence $(\alpha_n)_n$ of rational numbers in $[0,1]$ such that $\alpha_n \nearrow \alpha$. Whence $\alpha x$ is defined in $M$. Since $\mathcal F$ is a Riesz MV-algebra, so is $M$.

Clearly, $s_\phi$ is a strict square root on $M$.
\end{proof}

An MV-algebra $M$ satisfies the {\it two-divisible property} or $M$ is {\it two-divisible}, if given $x\in M$, there is $y\in M$ such that $2.y=x$ and $y\odot y=0$. For example, the MV-algebra of dyadic numbers has the two-divisibility property but not the divisibility property. If $M=\Gamma(G,u)$, then $M$ has the two-divisibility property iff $G$ is two-divisible. We note that a group $G$ is {\it two-divisible} if given $g\in G$, there is $h\in G$ such that $y+y=g$. Since $G$ is an Abelian $\ell$-group, $y$ is unique. For example every Riesz MV-algebra is two-divisible.

\begin{prop}\label{pr:2-div} If $M$ is a two-divisible MV-algebra,
then $M$ is with square root.
\end{prop}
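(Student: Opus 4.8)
The plan is to pass through Mundici's equivalence and realise the square root as ``halving toward the top''. First I would write $M=\Gamma(G,u)$ for a unital Abelian $\ell$-group $(G,u)$, as granted by Mundici's theorem. By the remark immediately preceding the statement, $M$ being two-divisible forces $G$ to be two-divisible; and since $G$ is a lattice-ordered Abelian group it is torsion-free, so each element of $G$ has a \emph{unique} half. For $x\in M$ I would then define $s(x):=(x+u)/2$, i.e.\ the unique $z\in G$ with $z+z=x+u$. This is the exact $\ell$-group analogue of the square root $s(x)=(x+1)/2$ on $[0,1]$ from Example \ref{3.4}(ii), with the strong unit $u$ playing the role of $1$.

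The step I expect to be the main (though routine) obstacle is checking that $s(x)$ actually lands in $M=[0,u]$, so that $s$ is a well-defined self-map of $M$. From $0\le x\le u$ I get $u\le x+u\le 2u$. Halving is order-preserving in $G$: if $2c\ge 0$ then, using $2c^-=(2c)^-$ together with torsion-freeness, one gets $c^-=0$, hence $c\ge 0$; applying this to differences of halves shows $a\le b\Rightarrow a/2\le b/2$. Therefore $u/2\le s(x)\le u$, whence $s(x)\in[0,u]=M$. Uniqueness of the half (again from torsion-freeness) guarantees $s$ is well-defined.

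Finally I would verify the two square-root axioms by direct $\ell$-group computation, using $a\odot b=(a+b-u)\vee 0$ on $\Gamma(G,u)$. For (Sq1),
\begin{equation*}
s(x)\odot s(x)=\bigl(s(x)+s(x)-u\bigr)\vee 0=\bigl((x+u)-u\bigr)\vee 0=x\vee 0=x,
\end{equation*}
since $x\ge 0$. For (Sq2), suppose $y\odot y\le x$, that is $(2y-u)\vee 0\le x$; then in particular $2y-u\le x$, so $2y\le x+u$, and halving (order-preserving, as above) yields $y\le (x+u)/2=s(x)$. Hence $s$ satisfies both (Sq1) and (Sq2), so $M$ has a square root. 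The only genuinely delicate point is the membership $s(x)\in[0,u]$ treated in the previous paragraph; everything else is immediate from the $\ell$-group formulas.
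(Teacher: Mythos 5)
Your proof is correct and follows essentially the same route as the paper: both define $s(x)=(x+u)/2$ in the unital $\ell$-group $(G,u)$ with $M=\Gamma(G,u)$ and verify (Sq1) and (Sq2) by direct computation with $a\odot b=(a+b-u)\vee 0$. The only difference is that you spell out the membership $s(x)\in[0,u]$ and the order-preservation of halving, which the paper leaves implicit.
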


\begin{proof}
Define $s:M\to M$ by $s(x)=(x+u)/2$, $x \in M$. Here $+$ denotes the group addition in $(G,u)$ with $M=\Gamma(G,u)$.
The element $s(x)$ belongs to $M$. We have $s(x)\odot s(x)= (2((x+u)/2) -u)\vee 0= x$. On the other hand, if $y\odot y \le x$, then $(2y-u)\vee 0\le x$ which gives $ y \le y\vee (u/2)= ((2y -u)\vee 0) +u)/2\le (x+u)/2=s(x)$.
\end{proof}

An EMV-generalization of a tribe is an EMV-tribe introduced in \cite{lomi}. A system $\mathcal T\subseteq [0,1]^\Omega$ of fuzzy sets of a set $\Omega\ne \emptyset$ is said to be an {\it EMV-tribe} if
\vspace{1mm}
\begin{enumerate}[nolistsep]
\item[(i)] $0_\Omega \in \mathcal T$ where $0_\Omega(\omega)=0$ for each $\omega \in \Omega$;
\item[(ii)] if $a \in \mathcal T$ is a characteristic function, then (a) if $f\in \mathcal T$ and $f(\omega)\le a(\omega)$ for each $\omega \in \Omega$, then
$a-f \in \mathcal T$
(b) if $\{f_n\}$ is a sequence of functions from $\mathcal T$ with $f_n(\omega)\le a(\omega)$ for each $\omega \in \Omega$ and each $n\ge 1$, where $a\in \mathcal T$ is a characteristic function, then $\bigoplus_n f_n\in \mathcal T$, where $\bigoplus_nf_n(\omega) = \min\{\sum_n f_n(\omega),a(\omega)\}$, $\omega \in \Omega$;
\item[(iii)] for each $f,g \in \mathcal T$, there is a characteristic function $a \in \mathcal T$ such that $f(\omega), g(\omega)\le a(\omega)$ for each $\omega \in \Omega$;
\item[(iv)] given $\omega \in \Omega$, there is $f \in \mathcal T$ such that $f(\omega)=1$.
\end{enumerate}

Then the expression $\bigoplus_nf_n$ does not depend on a characteristic function $a\ge f_n$, $n\ge 1$, and every EMV-tribe of fuzzy sets is a Dedekind $\sigma$-complete EMV-algebra where points define all EMV-operations. Due to \cite{lomi}, every Dedekind $\sigma$-complete EMV-algebra is a $\sigma$-surjective image of some EMV-tribe.

\begin{prop}\label{pr:F-tribe}
Let $\mathcal T$ be an EMV-tribe such that $(f+1)/2\in \mathcal T$ for each $f \in \mathcal T$. Then $s(f)=(f+1)/2$, $f \in \mathcal T$, is a square root on $\mathcal T$. Moreover, $\mathcal T$ is in fact a divisible and strict tribe.
\end{prop}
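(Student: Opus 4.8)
The plan is to reduce the statement to Proposition \ref{pr:tribe}, for which it suffices to prove that $\mathcal T$ is in fact a tribe, i.e. that $\mathcal T$ possesses a top element $1_\Omega$. Once this is established, the EMV-tribe axioms collapse to the tribe axioms (taking the bounding characteristic function to be $1_\Omega$ throughout), and the hypothesis that $(f+1)/2\in\mathcal T$ for all $f$ is precisely condition (i) of Proposition \ref{pr:tribe}; that proposition then yields at once that $s(f)=(f+1)/2$ is a square root on $\mathcal T$ and that $\mathcal T$ is strict and divisible.

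The heart of the argument is producing the top element. First I would apply the hypothesis to $f=0_\Omega\in\mathcal T$: since $(0_\Omega+1)/2$ is the function taking the value $1/2$ at every $\omega\in\Omega$, the constant function $1/2$ belongs to $\mathcal T$. Recall that in an EMV-tribe all EMV-operations are computed pointwise, so the idempotents of $\mathcal T$ are exactly the characteristic functions lying in $\mathcal T$: indeed $a\oplus a=a$ means $\min\{2a(\omega),1\}=a(\omega)$ for every $\omega$, which forces $a(\omega)\in\{0,1\}$ at each point.

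Next I would invoke the full-subset axiom (E4) applied to the element $1/2\in\mathcal T$: there exists an idempotent $a\in\mI(\mathcal T)$ with $1/2\le a$. Since $a$ is then a characteristic function and $a(\omega)\ge 1/2$ for every $\omega$, necessarily $a(\omega)=1$ for all $\omega$, that is $a=1_\Omega$. Hence $1_\Omega\in\mathcal T$ and $\mathcal T$ has a top element. The remaining verification that $\mathcal T$ is a tribe is then routine: $0_\Omega\in\mathcal T$ by axiom (i), while using $1_\Omega$ as the bounding characteristic function, axiom (ii)(a) gives closure under $f\mapsto 1-f$ and axiom (ii)(b) gives closure under countable $\bigoplus$. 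Thus $\mathcal T$ is a tribe satisfying Proposition \ref{pr:tribe}(i), and that proposition completes the proof.

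I expect the main obstacle to be the non-obvious step of recognizing that (E4) alone forces the top element. The key point is that the constant function $1/2=s(0_\Omega)$ is strictly positive at every point yet must be dominated by some two-valued idempotent of $\mathcal T$, and the only such idempotent is $1_\Omega$; everything else then reduces to the already-established tribe case of Proposition \ref{pr:tribe}.
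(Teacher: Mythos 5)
Your proof is correct, and it differs from the paper's in its organization. You first extract the top element: $s(0_\Omega)=1/2\in\mathcal T$, and any two-valued function dominating the constant $1/2$ (which exists by EMV-tribe axiom (iii), or via (E4) once idempotents are identified with characteristic functions) must be $1_\Omega$; then $\mathcal T$ with top element is literally a tribe, and you reduce the square root, strictness, and divisibility wholesale to Proposition \ref{pr:tribe}. The paper instead verifies (Sq2) directly inside the EMV-tribe \emph{before} knowing a top element exists: for $f\odot f\le g$ it picks a characteristic function $a\ge f$, writes $f\odot f=(2f-a)\vee 0\le g$, and deduces $f\le (g+a)/2\le (g+1)/2=s(g)$; only afterwards does it observe $1=1/2\oplus 1/2\in\mathcal T$ (implicitly the same domination argument as yours) and invoke Proposition \ref{pr:tribe} for divisibility. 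Your route is more economical, since no square-root axiom has to be checked by hand once the reduction to the tribe case is made; the paper's route shows that the verification of (Sq2) can be carried out purely from the EMV-tribe axioms, independently of the existence of a top element. Both arguments ultimately hinge on the same key observation, namely that a characteristic function dominating the constant $1/2$ is identically $1$, so the difference is one of routing rather than of substance.
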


\begin{proof}
Let $f,g \in \mathcal T$ be such that $f\odot f \le g$. There is a characteristic function $a\in \mathcal T$ such that $f\le a$, so that $f\odot f = (2f-a)\vee 0 \le g$ which gives $f\le (g+a)/2 \le (g+1)/2$. If the condition holds on $\mathcal T$, then it is simply to verify that $s$ is a square root on $\mathcal T$.

Then $s(0)=1/2\in \mathcal T$. As in implication (i) $\Rightarrow$ (ii) of Proposition \ref{pr:tribe}, we can show that $1/2^n\in \mathcal T$ for each $n\ge 1$. The representing EMV-algebra $N(\mathcal T)=\mathcal T \cup \{1-f\mid f\in \mathcal T\}$.
In particular, we have $1=1/2\oplus 1/2 \in \mathcal T$, so that $\mathcal T$ is a strict tribe. By Proposition \ref{pr:tribe}, it is clear that $\mathcal T$ is divisible.
\end{proof}

Now, we present a general form of square roots on MV-algebras.

We note that if $M=\Gamma(G,u)$, then $M$ is totally ordered iff so is the group $G$. We remind that an $\ell$-group $G$ enjoys unique extraction of roots if for all integers $n\ge 1$ and $g,h \in G$, $ng=nh$ implies $g=h$. Due to \cite[Lem 2.1.4]{Gla}, every totally ordered group enjoys unique extraction of roots, and every equation $ny=x$, $x,y \in G$, has a unique solution $y$ denoted as $y=\frac{1}{n}x$. Consequently, every Abelian $\ell$-group
enjoys unique extraction of roots.
\begin{prop}\label{pr:4.1}
Let $(G,u)$ be a totally ordered unital $\ell$-group. If $r$ is a square root on $M=\Gamma(G,u)$, then either $r=\id_M$ if $r(0)=0$ or each element $(x+u)/2$, $x\in M$, exists in $M$, and
$$
r(x)=(x+u)/2,\quad x\in M,
$$ if $r(0)>0$, where $+$ is the group addition in the group $G$. In the second case, $r$ is strict.
\end{prop}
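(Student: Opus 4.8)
The plan is to work in the group representation $M=\Gamma(G,u)$, where $x'=u-x$ and $x\odot x=(2x-u)\vee 0$ for $x\in M$, and then to split according to the value of $r(0)$, which in the totally ordered case is either $0$ or strictly positive.

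First I would dispose of the case $r(0)=0$. By Theorem \ref{3.5}, $r(0)=0$ forces $M$ to be a generalized Boolean algebra, and since $M$ has a top element it is in fact a Boolean algebra; Proposition \ref{3.2}(xi) then gives $r=\id_M$ at once (equivalently, $r(x)=r(x)\odot r(x)=x$).

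The substance is the case $r(0)>0$, where the first goal is strictness, i.e. $2r(0)=u$. From $r(0)\odot r(0)=0$ we read off $(2r(0)-u)\vee 0=0$, hence $2r(0)\le u$. For the reverse inequality I would apply Proposition \ref{3.2}(xiii) with $a=u$ (the top idempotent, so the hypothesis $r(r(0))\le a$ is automatic and $r(0)\ra_u 0=r(0)'$): the element $t:=r(0)'\odot r(0)'=(u-2r(0))\vee 0$ is then a Boolean element of $M$. The key structural fact is that in a totally ordered MV-algebra the only Boolean (equivalently $\odot$-idempotent) elements are $0$ and $u$, which is immediate from the group picture, since $(2x-u)\vee 0=x$ forces $x\in\{0,u\}$. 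As $t=u$ would give $r(0)=0$, we must have $t=0$, i.e. $u-2r(0)\le 0$; combined with $2r(0)\le u$ this yields $2r(0)=u$. Hence $r(0)'=u-r(0)=r(0)$, so $r$ is strict and $u/2=r(0)$ exists in $M$.

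Finally I would derive the formula. For any $x\in M$, condition (Sq1) reads $(2r(x)-u)\vee 0=r(x)\odot r(x)=x$. If $x>0$, the left-hand side is positive, forcing $2r(x)-u=x$, i.e. $2r(x)=x+u$; if $x=0$, the already established equality $2r(0)=u=0+u$ does the same. Thus $2r(x)=x+u$ for every $x\in M$, and since $G$ has unique extraction of roots (see \cite[Lem 2.1.4]{Gla}), this simultaneously shows that $(x+u)/2$ exists in $M$ and identifies it with $r(x)$. The strictness claim has already been recorded in the previous step. I expect the main obstacle to be the strictness argument — specifically, recognizing $r(0)'\odot r(0)'$ as the relevant Boolean element through Proposition \ref{3.2}(xiii) and using the absence of nontrivial Boolean elements in a chain to pin it down as $0$; once $2r(0)=u$ is in hand, the formula follows by the one-line case analysis above.
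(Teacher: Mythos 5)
Your proof is correct and follows essentially the same route as the paper's: the case $r(0)=0$ via the Boolean/identity argument, the equation $2r(x)=x+u$ forced by total order for $x>0$, and Proposition \ref{3.2}(xiii) applied to $r(0)'\odot r(0)'$ (which must be $0$ or $u$ in a chain) to pin down $2r(0)=u$ and strictness. The only differences are cosmetic: you establish strictness before the formula, and you omit the paper's side observation that $r(0)=\max\{z\in G\mid 2z\le u\}$, which is not needed for the main argument.
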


\begin{proof}
If $r(0)=0$, then $M$ is a two-element Boolean algebra $\{0,1\}$, and $r=\id_M$, see \cite[Prop 2.19]{Hol} or Proposition \ref{3.2}(xi).

Assume $r(0)>0$. For any $x\in M$, we have two cases.

(1) Let $x>0$. Then $x=r(x)\odot r(x)=(r(x)+r(x)-u)\vee 0$. Since $M$ is totally ordered, $x=2r(x)-u$ and so $x+u=2r(x)$.
The unique extraction of roots implies that the element $(x+u)/2$ exists in $G$ and $r(x)=(x+u)/2$ belongs to $M$.

(2) Let $x=0$. Then $0=r(0)\odot r(0)=(2r(0)-u)\vee 0$ entails that $2r(0)-u\leq 0$, that is $2r(0)\leq u$. On the other hand, let
$y\in M$. We have $y\odot y\leq 0$ \iff $2y\leq u$, so by (Sq2), $2y\leq u$ implies that $y\leq r(0)$. That is,
$r(0)=\max\{z\in M\mid 2z\leq u\}$. Now, if $g\in G$ be such that $2g\leq u$, then $g\in G^-$ implies that $g\leq r(0)$ and
from $g\in G^+$ we get $g\leq 2g\leq u$, that is $g\in [0,u]=M$ which means $g\leq r(0)$.
Hence, $r(0)=\max\{z\in G\mid 2z\leq u\}$.
On the other hand, due to Proposition \ref{3.2}(xiii), the element $r(0)^-\odot r(0)^-$ is an idempotent element of $M$, so $r(0)^-\odot r(0)^-=0$ or $r(0)^-\odot r(0)^-=1$ (since $M$ is totally ordered).
If $r(0)^-\odot r(0)^-=1$, then $r(0)^-=1$ entails that $r(0)=0$ which is excluded. If $r(0)^-\odot r(0)^-=0$, then $r(0)\oplus r(0)=u$ entails
$r(0)+r(0)\geq u$. Thus, $2r(0)=u$ and $r(0)=(0+u)/2$ which implies $r$ is strict.
\end{proof}

We note, that if $(G,u)$ is a totally ordered unital group such that $r(x):=(x+u)/2$, $x\in M$, exists in $G$, then it exists in $M=\Gamma(G,u)$ and $r$ is a strict square root on $M$ as it is straightforward to verify.

The latter result can be generalized as follows.

\begin{thm}\label{4.3}
Let $(G,u)$ be a unital $\ell$-group and let $M=\Gamma(G,u)$ have a strict square root $r$.
For each $x\in M$, the element $(x+u)/2$ is defined in $G$, and
\begin{equation}\label{eq:equat}
r(x)=(x+u)/2,\quad x\in M,
\end{equation}
where $+$ denotes the group addition in the group $G$.
\end{thm}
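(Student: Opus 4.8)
The plan is to run the entire argument inside the group $G$, where the square $a\odot a$ has the explicit form $a\odot a=(a+a-u)\vee 0=(2a-u)\vee 0$, and to show that strictness forces the lattice join $\vee\,0$ to be trivial. The only hypothesis I will really use is (Sq1), namely $r(x)\odot r(x)=x$, read inside $G$.

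First I would record what strictness says. Since $M=\Gamma(G,u)$ has top element $u$, the complement is $a'=u-a$, and $r$ is a strict square root on the MV-algebra $M$, so by definition $r(0)=r(0)'=u-r(0)$; in $G$ this reads $2r(0)=u$, so $u/2:=r(0)$ is defined in $G$ and lies in $M$. Next I would invoke monotonicity: by Proposition \ref{3.2}(ii), from $0\le x$ we get $r(0)\le r(x)$ for every $x\in M$, hence $r(x)\ge r(0)=u/2$; doubling in the $\ell$-group then gives $2r(x)\ge 2r(0)=u$, that is $2r(x)-u\ge 0$. Finally I would compute, for an arbitrary $x\in M$,
\[
x=r(x)\odot r(x)=(2r(x)-u)\vee 0=2r(x)-u,
\]
where the last equality holds precisely because $2r(x)-u\ge 0$. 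Solving gives $2r(x)=x+u$, so $(x+u)/2$ is defined in $G$ (it is the element $r(x)\in M\subseteq G$) and $r(x)=(x+u)/2$, which is exactly \eqref{eq:equat}.

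I do not expect a serious obstacle here; the one point to get right is the recognition that strictness is exactly the input that pins down $r(0)=u/2$ and therefore $r(x)\ge u/2$, which is what trivialises the join $\vee\,0$. This is the clean general replacement for the total-order assumption of Proposition \ref{pr:4.1}: there, for $x>0$ total order was used to read off $x=2r(x)-u$, and the genuinely delicate step was determining the value of $r(0)$ (handled through the idempotent of Proposition \ref{3.2}(xiii)); here strictness supplies that value outright, so no decomposition of $G$ into totally ordered components is needed and the computation is uniform in $x$.
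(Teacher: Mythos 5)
Your proof is correct, but it takes a genuinely different and considerably more elementary route than the paper. The paper proves Theorem \ref{4.3} by localization: it embeds $M$ into the product $\prod_{P\in\spec(M)}M/P$ of its totally ordered prime quotients, applies the totally ordered case (Proposition \ref{pr:4.1}) to each quotient via the induced square roots $t_P$, and then lifts the componentwise identity $t_P(x/P)=(x/P+_Pu/P)/2$ back to $G$ through the categorical equivalence between MV-algebras and unital Abelian $\ell$-groups. You bypass all of that: strictness read in $G$ gives $2r(0)=u$ outright, monotonicity gives $r(x)\geq r(0)$ and hence $2r(x)-u\geq 0$, and then (Sq1) in the form $x=(2r(x)-u)\vee 0$ collapses to $x=2r(x)-u$ because the truncation is trivial. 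This is uniform in $x$, needs no prime spectrum, no appeal to \cite[Thm 3.1]{Amb}, and no $\ell$-group lifting; it also isolates exactly where the paper's totally ordered case had to work hard (pinning down $r(0)$ via the idempotent of Proposition \ref{3.2}(xiii)) and shows that strictness supplies that value for free. What the paper's heavier route buys is consistency with its general technique (the same subdirect-decomposition machinery reappears around Theorem \ref{4.3.1}), but for this particular statement your argument is strictly simpler. Two small points of bookkeeping: your opening claim that you ``only use (Sq1)'' is not quite accurate, since the monotonicity you invoke (Proposition \ref{3.2}(ii)) is proved from (Sq2) — this is harmless because you cite the proposition rather than reprove it; and the passage from $2r(x)=x+u$ to the notation $(x+u)/2$ tacitly uses unique extraction of roots in Abelian $\ell$-groups, which the paper records just before Proposition \ref{pr:4.1}, so it is available to you but deserves an explicit citation.
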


\begin{proof}
Set $X:=\spec(M)$. Consider the embedding $\varphi:M\to M_0:=\prod_{P\in X}M/P$ defined by $\varphi(x)=(x/P)_{P\in X}$, $x \in M$, and let $M_0=\Gamma(G_0,u_0)$, where $u_0=\varphi(u)$ and $M_1:=\varphi(M)=\Gamma(G_1,u_1)$, where $G_1\subseteq G_0$ and $u_1=u_0$.

(1) For each $P\in X$, we have $r(0)\notin P$ (otherwise $1=u=r(0)\oplus r(0)\in P$) consequently, $r(0)/P\neq 0/P$.

(2) By \cite[Thm 3.1]{Amb}, the surjective homomorphism $\pi_P\circ\varphi:M\to M/P$ induces a square root $t_P:M/P\to M/P$ defined by
$t_P(x/P)=r(x)/P$, ($x\in M$). Since $M/P$ is totally ordered, there is a totally ordered unital $\ell$-group $(G_P,u_P)$ such that $\Gamma(G_P,u_P)=M/P$. Part (1) implies that $M/P$ is not a Boolean algebra and so by Proposition \ref{pr:4.1},  $r(x)/P=t_P(x/P)=(x/P+_P u/P)/2$ $(x\in M)$, where $+_P$ is the group addition in the unital $\ell$-group $G_P$.

(3) On the MV-algebra $M_0$, we can define a square root $t:M_0\to M_0$ by $t((y_P)_{P\in X})=(t_P(y_P))_{P\in X}$ for each $y=(y_P)_{P\in X}\in M_0$.  We have
$$
\varphi(r(x))=(r(x)/P)_{P\in X}=(t_P(x/P))_{P\in X}=((x/P+_P u/P)/2)_{P\in X} =t(\varphi(x)), \quad x\in M.
$$
Due to the categorical equivalence between the category of unital Abelian $\ell$-groups and the category of MV-algebras, \cite{Mun,CDM}, the MV-injection $\varphi:M\to M_0$ can be uniquely extended to an injective homomorphism of unital $\ell$-groups $\hat \varphi: (G,u)\to (G_0,u_0)$, see e.g. \cite[Lem 7.2.1]{CDM}.

Therefore,
$((x/P+_P u/P)/2)_{P\in X}+((x/P+_P u/P)/2)_{P\in X}=(x/P+_P u/P)_{P\in X}$, so $((x/P+_Pu/P)_{P\in X})/2$ exists in $G_1\subseteq G_0$ and is equal to
$((x/P+_P u/P)/2)_{P\in X}$.

(4) Given $P\in X$, let $\hat P$ be the $\ell$-ideal of $G$ generated by $P$. Then $x/P=x/\hat P$ for each $x\in M$.
By (3), $\varphi(r(x))+_{G_1}\varphi(r(x))=(x/P+_P u/P)_{P\in X}= (x/\hat P +_P u/\hat P)_{P\in X}=((x+u)/\hat P)_{P\in X}\in G_1$, so

\begin{eqnarray*}
r(x)+r(x)&=& \hat\varphi^{-1}(\varphi(r(x))+_{G_1} \varphi(r(x))
= \hat\varphi^{-1}(\hat \varphi(r(x))+_{G_1} \hat\varphi(r(x))\\
&=& \hat\varphi^{-1}\big(((x/P+_P u/P)/2)_{P\in X} +_{G_1} ((x/P+_P u/P)/2)_{P\in X}\big)\\
&=& \hat\varphi^{-1}((x/P+_P u/P)_{P \in X}) = x+u.
\end{eqnarray*}
Therefore, the equation $2r(x)=(x+u)$ has a unique solution $r(x)=(x+u)/2$.
\end{proof}

In addition, if $(G,u)$ is a unital $\ell$-group and for each $x\in M=\Gamma(G,u)$, the element $(x+u)/2$ exists in $M$, then $r(x)=(x+u)/2$, $x\in M$, is a strict square root on $M$.

If $r$ is a strict square root on an MV-algebra $M$, due to the latter theorem, $r(0)=(0+u)/2= 1/2$, and as in the proof of Proposition \ref{pr:tribe}(ii), the MV-algebra of dyadic numbers in the real interval $[0,1]$ can be embedded into $M$ which gives a new proof of \cite[Thm 6.9]{Hol} that was reproved in \cite[Thm 2.4]{Amb}.

Let $M$ be an EMV-algebra with a square root $r$. If $r(0)=0$, $M$ is a generalized Boolean algebra, and $r=\id_M$. If $r$ is strict, $M$ is with top element, and it is equivalent to a strict MV-algebra, so that Theorem \ref{4.3} describes $r$. The following result shows that the latter theorem allows us to describe all square roots on EMV-algebras. It is enough to consider only proper EMV-algebras that are not generalized Boolean algebras.

\begin{thm}\label{th:EMV}
Let $r$ be a square root on a proper EMV-algebra algebra $M$, $r(0)>0$. Then there are a generalized Boolean algebra $M_1$ and a strict EMV-algebra $M_2=\Gamma(G_2,u_2)$ such that $M=M_1\times M_2$, and
\begin{equation}\label{eq:EMV}
r(x_1,x_2)= (x_1,(x_2+u_2)/2)= x_1\vee ((x_2+u_2)/2),\quad x=(x_1,x_2)\in M.
\end{equation}
\end{thm}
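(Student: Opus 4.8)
The plan is to read off the decomposition from the classification theorem and then compute the square root coordinatewise. First I would invoke Theorem \ref{3.11}, which asserts that exactly one of its three cases holds for $M$ together with $r$. Since $M$ is proper, it has no top element, so Theorem \ref{3.10} prevents $M$ from being strict, ruling out case (ii). Since $r(0)>0$, Theorem \ref{3.5} prevents $M$ from being a generalized Boolean algebra, ruling out case (i). Hence case (iii) must hold, and there is an EMV-isomorphism $\varphi\colon M\to M_1\times M_2$ with $M_1$ a generalized Boolean algebra and $M_2$ a strict EMV-algebra; by Proposition \ref{pr:uniq} the factors are determined up to isomorphism. By Theorem \ref{3.10}, $M_2$ has a top element $u_2$, so it is term-equivalent to an MV-algebra, and by Mundici's theorem \cite{Mun} we may write $M_2=\Gamma(G_2,u_2)$ for a unital $\ell$-group $(G_2,u_2)$; by Corollary \ref{strict-N}(ii) the square root on $M_2$ is strict.

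Next I would pin down the square root. On $M_1\times M_2$ the coordinatewise map $(x_1,x_2)\mapsto(r_1(x_1),r_2(x_2))$, where $r_1,r_2$ denote the square roots of the two factors, is itself a square root: conditions (Sq1) and (Sq2) are checked coordinatewise, since $\odot$ and $\leq$ act coordinatewise in a direct product. Because a square root is unique (the remark after Definition \ref{3.1}), this coordinatewise map must be the square root transported from $M$ along the isomorphism $\varphi$, so that $r(x_1,x_2)=(r_1(x_1),r_2(x_2))$ under the identification $M=M_1\times M_2$. On the generalized Boolean factor, Proposition \ref{3.2}(xi) (equivalently Example \ref{3.4}(i)) gives $r_1=\id_{M_1}$, whence $r_1(x_1)=x_1$. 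On the strict factor $M_2=\Gamma(G_2,u_2)$, Theorem \ref{4.3} applies and yields $r_2(x_2)=(x_2+u_2)/2$, the element $(x_2+u_2)/2$ being defined in $G_2$. Combining these gives $r(x_1,x_2)=(x_1,(x_2+u_2)/2)$, the first equality in \eqref{eq:EMV}.

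Finally I would record the second equality. Identifying $M_1$ with $M_1\times\{0\}$ and $M_2$ with $\{0\}\times M_2$ inside $M=M_1\times M_2$, we read $x_1$ as $(x_1,0)$ and $(x_2+u_2)/2$ as $(0,(x_2+u_2)/2)$, and the lattice join in a direct product is computed coordinatewise, so
\[
x_1\vee\big((x_2+u_2)/2\big)=(x_1\vee 0,\;0\vee (x_2+u_2)/2)=(x_1,(x_2+u_2)/2)=r(x_1,x_2).
\]
I do not expect any genuine obstacle: all the substantial work is already carried by Theorems \ref{3.11}, \ref{3.10}, \ref{4.3} and Proposition \ref{3.2}(xi). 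The only points requiring care are verifying that the coordinatewise map really is a square root so that uniqueness forces it to coincide with $r$ (rather than leaning on Example \ref{3.4}(iv), which is stated only for families of MV-algebras), and correctly interpreting the symbols $x_1$ and $(x_2+u_2)/2$ in the join formula as the corresponding elements of the product $M$.
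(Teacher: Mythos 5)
Your proof is correct and takes essentially the same route as the paper: decompose $M\cong M_1\times M_2$ via Theorem \ref{3.11} (the paper leaves the case elimination implicit, whereas you spell out why (i) and (ii) are excluded by Theorem \ref{3.5} and Theorem \ref{3.10}), then identify $r$ coordinatewise as the identity on the Boolean factor and as $x_2\mapsto(x_2+u_2)/2$ on the strict factor via Theorem \ref{4.3}. The only cosmetic difference is how the coordinatewise action of $r$ is justified: the paper projects, setting $r_i=\pi_i\circ r$ (implicitly relying on Proposition \ref{Homo}), while you assemble the coordinatewise square root from the factor square roots and invoke uniqueness of square roots; both are valid one-step arguments for the same fact.
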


\begin{proof}
By Theorem \ref{3.11}, $M$ can be expressed as $M\cong M_1\times M_2$, where $M_1$ is a generalized Boolean algebra and
$M_2=\Gamma(G_2,u_2)$ is a strict EMV-algebra. For simplicity, we assume that $M=M_1\times M_2$. If $r_i=\pi_i\circ r$, $i=1,2$, then $r_1=\id_{M_1}$ and by Theorem \ref{4.3}, $r_2(x_2)=(x_2+u_2)/2$, $x_2\in M_2$ which concludes \eqref{eq:EMV}.
\end{proof}

Finally, we completely characterize square roots on any MV-algebra in a general case, not only on Boolean and strict ones.

\begin{thm}\label{4.3.1}
Let $(M;\oplus,',0,1)$ be an MV-algebra with a square root $r:M\to M$, $(G,u)$ be its corresponding unital $\ell$-group, and set $w=r(0)'\odot r(0)'$. Then, for each $x\in M$, the element $(x\wedge w)\vee((x\wedge w')+w')/2$ exists in $M$, and
\begin{equation}\label{eq:w}
r(x)=(x\wedge w)\vee((x\wedge w')+w')/2,
\end{equation}
where $+$ is the addition in the group $G$.
\end{thm}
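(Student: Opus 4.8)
The plan is to reduce the general formula to the two extreme cases already handled, namely the Boolean case ($r=\id_M$) and the strict case (Theorem \ref{4.3}), by using H\"ohle's structural splitting of an MV-algebra with square root. First I would verify that $w=r(0)'\odot r(0)'$ is a Boolean element. Applying Proposition \ref{3.2}(xiii) with the top element $a=1$ is legitimate, since $r(r(0))\le 1$ and $r(0)\ra_1 0=\lambda_1(r(0))\oplus 0=r(0)'$; it yields that $w=(r(0)\ra_1 0)\odot(r(0)\ra_1 0)$ lies in $\mI(M)$, i.e. $w$ is Boolean. This $w$ is precisely the splitting idempotent from the proof of Theorem \ref{3.11} (equivalently from \cite[Thm 2.21]{Hol}), so $[0,w]$ is a Boolean algebra, $[0,w']$ is a strict MV-algebra, and $M\cong[0,w]\times[0,w']$.

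Next I would make the isomorphism explicit and compute $r$ componentwise. By Corollary \ref{3.8} with $a=w$ and $b=1$ (so $\lambda_1(w)=w'$), the map $\varphi:M\to[0,w]\times[0,w']$, $\varphi(x)=(x\wedge w,\ x\wedge w')$, is an MV-isomorphism. Since a square root is unique and acts componentwise on a direct product (the note after Definition \ref{3.1} and Example \ref{3.4}(iv)), one has $\varphi(r(x))=(r_1(x\wedge w),\ r_2(x\wedge w'))$, where $r_1,r_2$ are the square roots of the factors. On the Boolean factor $r_1=\id_{[0,w]}$, so $r_1(x\wedge w)=x\wedge w$; on the strict factor $[0,w']=\Gamma(G_2,w')$ Theorem \ref{4.3} gives $r_2(x\wedge w')=((x\wedge w')+w')/2$, with that element existing in $[0,w']\subseteq M$. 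Finally, for $a\le w$ and $b\le w'$ one checks $\varphi^{-1}(a,b)=a\vee b$, because $w\wedge w'=0$ forces $(a\vee b)\wedge w=a$ and $(a\vee b)\wedge w'=b$; hence
\[
r(x)=\varphi^{-1}\big(x\wedge w,\ ((x\wedge w')+w')/2\big)=(x\wedge w)\vee\big((x\wedge w')+w'\big)/2,
\]
which is \eqref{eq:w}, and the indicated supremum exists in $M$ since both of its arguments do.

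It remains to dispose of the degenerate cases and to pin down the ambient group, which I expect to be the only delicate point. If $r(0)=0$, then $M$ is a generalized Boolean algebra by Theorem \ref{3.5}, so $w=1$, $w'=0$, and \eqref{eq:w} collapses to $r=\id_M$ (Remark \ref{rmk-cor}); if $r$ is strict, then $w=0$, $w'=1$, and \eqref{eq:w} is exactly Theorem \ref{4.3}. In the genuine case $0<w<1$, the main obstacle is that the addition and halving in Theorem \ref{4.3} live in the $\ell$-group $G_2$ of the factor $[0,w']$, whereas \eqref{eq:w} is phrased in $G$. Because $w$ is Boolean, the Mundici equivalence turns $M\cong[0,w]\times[0,w']$ into $G\cong G_1\times G_2$ with $G_2$ a direct factor of $G$ (hence an $\ell$-subgroup on which $+$ is the restriction of the addition of $G$) carrying strong unit $w'$; therefore $((x\wedge w')+w')/2$ computed in $G_2$ agrees with the same expression computed in $G$. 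Carrying this identification through $\varphi^{-1}$ then completes the argument.
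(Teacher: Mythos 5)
Your proof is correct and follows essentially the same route as the paper's: identify $w$ as a Boolean element via Proposition~\ref{3.2}(xiii), split $M\cong[0,w]\times[0,w']$ into a Boolean factor and a strict factor (H\"ohle's \cite[Thm 2.21]{Hol}), apply $r=\id$ on the Boolean part and Theorem~\ref{4.3} on the strict part, and recombine through $r(x)=(r(x)\wedge w)\vee(r(x)\wedge w')$. The only differences are cosmetic: the paper transports $r$ to the factors as induced square roots along the surjections $x\mapsto x\wedge w$ and $x\mapsto x\wedge w'$ (citing \cite[Thm 3.1]{Amb}) instead of your uniqueness-of-the-componentwise-square-root argument, and your explicit check that addition and halving in the factor group $G_2$ agree with those of $G$ makes precise a step the paper leaves implicit.
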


\begin{proof}
Set $w:=r(0)'\odot r(0)'$. Due to Proposition \ref{3.2}(xiii), $w$ is a Boolean element of $M$.
If $w=1$, then $r(0)=0$ and $M$ is a Boolean algebra. Moreover, $r=\id_M$ and $(x\wedge w)\vee((x\wedge w')+w')/2=x=r(x)$ for all $x\in M$.

If $w=0$, then $w'=1= r(0)\oplus r(0)$, so that $r$ is strict, and by Corollary \ref{4.2},
$r(x)=(x+1)/2= (x\wedge w)\vee((x\wedge w')+w')/2$.

Let $w\notin\{0,1\}$. By \cite[Thm 2.21]{Hol}, $M\cong M_1\times M_2$, where
$M_1$ is a Boolean algebra $([0,w];\oplus,\lam_w,0,w)$ and $M_2$ is a strict MV-algebra $([0,w'];\oplus,\lam_{w'},0,w')$.
Consider the homomorphisms $f_1:M\to M_1$ and $f_2:M\to M_2$ define by $f_1(x)=x\wedge w$ and $f_2(y)=y\wedge w'$. Clearly,
$f_1$ and $f_2$ are surjective maps. By \cite[Thm 3.1]{Amb},
$t_1(f_1(x))=f_1(r(x))=r(x)\wedge w$ and
$t_2(f(x))=f_2(r(x))=r(x)\wedge w'$ (for all $x\in M$) are square roots on $M_1$ and $M_2$, respectively. Moreover, $r(x)\wedge w= t_1(f_1(x))=f_1(x)=x\wedge w$.
For each $x\in M$, we have
\begin{eqnarray*}
r(x)\wedge w&=& f_1(r(x))=t_1(f_1(x))=f_1(x)=x\wedge w, \mbox{ by Corollary \ref{4.2}}\\
r(x)\wedge w'&=& f_2(r(x))=t_2(f_2(x))=(f_2(x)+w')/2=((x\wedge w')+w')/2, \mbox{ by Theorem \ref{4.3}}.
\end{eqnarray*}
Therefore, for all $x\in M$,
\begin{eqnarray*}
r(x)&=& r(x)\wedge (w\vee w')=(r(x)\wedge w)\vee (r(x)\wedge w')=(x\wedge w)\vee \big((x\wedge w')+w'\big)/2.
\end{eqnarray*}
\end{proof}

Theorem \ref{4.3.1} entails the following characterization of square roots on EMV-algebras.

\begin{cor}\label{co:EMV}
Let $r$ be a square root on an EMV-algebra $M$ and $x$ be an arbitrary element of $M$. Take any idempotent $a\in M$ such that $r(r((0)),x\le a$ and define $w_a= \lambda_a(r(0))\odot \lambda_a(r(0))$. Then
\begin{equation}\label{eq:w(a)}
r(x)=(x\wedge w_a)\vee \big((x\wedge \lambda_a(w_a))+\lambda_a(w_a)\big)/2.
\end{equation}
\end{cor}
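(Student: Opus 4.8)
The plan is to transfer Theorem \ref{4.3.1} from the MV-algebra $[0,a]$ to $M$. By Proposition \ref{3.2}(viii), the map $r_a\colon[0,a]\to[0,a]$ given by $r_a(y)=r(y)\wedge a$ is a square root on the MV-algebra $([0,a];\oplus,\lam_a,0,a)$, so Theorem \ref{4.3.1} applies to this MV-algebra with square root $r_a$ once I identify the relevant data. First I would record that $r(0)\le a$: indeed $r(0)\le r(r(0))$ by Proposition \ref{3.2}(i), and $r(r(0))\le a$ by hypothesis, whence $r_a(0)=r(0)\wedge a=r(0)$. Consequently the complement of $r_a(0)$ in $[0,a]$ is $\lam_a(r(0))$, and the element ``$w$'' produced by Theorem \ref{4.3.1} for $r_a$ is exactly $w=\lam_a(r(0))\odot\lam_a(r(0))=w_a$. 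Moreover $w_a$ is a Boolean element of $[0,a]$ by Proposition \ref{3.2}(xiii), whose hypothesis $r(r(0))\le a$ is precisely what we have assumed, so Theorem \ref{4.3.1} is indeed applicable to $[0,a]$.

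Next I would check that $r$ and $r_a$ agree on the chosen $x$. Since $r_a(x)=r(x)\wedge a$, it suffices to prove $r(x)\le a$. By Remark \ref{rmk-cor}(ii) we have $r(x)\le x\oplus r(0)$, and because $x\le a$, $r(0)\le a$, and $a$ is idempotent, $x\oplus r(0)\le a\oplus a=a$; hence $r(x)\le a$ and $r_a(x)=r(x)$. With these identifications, applying Theorem \ref{4.3.1} to $[0,a]=\Gamma(G_a,a)$ with the square root $r_a$ yields
\[
r(x)=r_a(x)=(x\wedge w_a)\vee\big((x\wedge\lam_a(w_a))+\lam_a(w_a)\big)/2,
\]
where $+$ and the division by $2$ are computed in the unital $\ell$-group $(G_a,a)$ associated to $[0,a]$. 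This is exactly \eqref{eq:w(a)}.

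The main point to get right is not any single calculation but the bookkeeping of where each operation lives: the complement $\lam_a(\cdot)$, the group sum $+$, and the halving must all be read inside $[0,a]$ (equivalently in $G_a$), not in the ambient EMV-algebra, and one must confirm that the resulting element lands back in $[0,a]\subseteq M$ and does not depend on the auxiliary choice of $a$. The hardest verification is the inequality $r(x)\le a$, which forces $r_a(x)=r(x)$ and is what allows the MV-algebra formula of Theorem \ref{4.3.1} to describe the genuine value $r(x)$; the hypothesis $r(r(0))\le a$ plays exactly the double role of securing $r(0)\le a$ and, via Proposition \ref{3.2}(xiii), of guaranteeing that $w_a$ is Boolean so that Theorem \ref{4.3.1} may be invoked on $[0,a]$.
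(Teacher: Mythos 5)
Your proposal is correct and follows essentially the same route as the paper's own proof: restrict to the MV-algebra $[0,a]$ with the induced square root $r_a(\cdot)=r(\cdot)\wedge a$ from Proposition \ref{3.2}(viii), identify the element $w$ of Theorem \ref{4.3.1} with $w_a$, and conclude via $r(x)=r_a(x)$. You even supply a detail the paper asserts without justification, namely the verification $r(x)\le x\oplus r(0)\le a\oplus a=a$ (via Remark \ref{rmk-cor}(ii)) that guarantees $r_a(x)=r(x)$.
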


\begin{proof}
Let $x\in M$ be given and $a\in \mathcal I(M)$ be such that $r(r((0)),x\le a$. Clearly $r(0)\le a$. Take the MV-algebra $M_a=([0,a];\oplus,\lambda_a,0,a)$ and $M_b=([0,b];\oplus, \lambda_b,0,b)$. Define $r_a= r\wedge a$; it is a square root on $M_a$. Then $r_a(r_a(0))=r_a(r(0)\wedge a)=r_a(r(0))= r(r(0))$.

By Proposition \ref{3.2}(xiii), the element $w_a$ belongs to $\mathcal I(M)$. Since $r(x)=r_a(x)$, using \eqref{eq:w}, we have \eqref{eq:w(a)}.
\end{proof}

\section{Conclusion}

In the paper, we generalized the notion of a square root from MV-algebras to EMV-algebras that are a generalization of MV-algebras where top element is not assumed, but each element is dominated by some idempotent element. On the other hand, every EMV-algebra without top element can be embedded into an EMV-algebra with top element (characterizing EMV-algebra) as its maximal ideal.
Square roots were used to characterize EMV-algebras, e.g. every strict EMV-algebra is with top element, Theorem \ref{3.10}. We found a relation between square roots on EMV-algebras and their representing EMV-algebras, see Theorems \ref{3.6}, Corollary \ref{strict-N}, and Theorem \ref{3.12}. We showed that each EMV-algebra with square root is either a generalized Boolean algebra, a strict EMV-algebra, or is a direct product of a generalized Boolean algebra and a strict EMV-algebra, Theorem \ref{3.11}.

Finally, we presented square roots on tribes, and EMV-tribes. We gave a complete characterization of any square root on an MV-algebra by group addition in the corresponding unital $\ell$-group, see Theorems \ref{4.3}. The application of Theorem \ref{4.3.1} describes all square roots on every EMV-algebra, not only on MV-algebras, Theorem \ref{th:EMV}.

In the future, we hope to study square roots on pseudo MV-algebras.


\end{document}